\newtheorem{theorem}{Theorem}
\newtheorem{lemma}[theorem]{Lemma}
\newtheorem{defn}[theorem]{Definition}
\newtheorem{remark}{\it Remark\/}
\newenvironment{proof}{\par\noindent\textsc{Proof.}}{\hfill$\square$\par}
\newcommand{\btl}[1]{\begin{theorem}\label{th:#1}}
\newcommand{\bt}{\begin{theorem}}
\newcommand{\etl}{\end{theorem}}
\newcommand{\eps}{\varepsilon}
\newcommand{\pa}{\partial}
\newcommand{\ep}{\varepsilon}
\newcommand{\RR}{{\mathbb{R}}}
\newcommand{\grad}{\mathop{{\rm grad}}}
\newcommand{\rest}[1]{\rule[-8pt]{.5pt}{14pt}_{\;#1}}
\newcommand{\bel}[1]{\begin{equation}\label{eq:#1}}
\newcommand{\eel}{\end{equation}}
\title{Schauder-type estimates and applications}%
\author{Satyanad Kichenassamy%
\thanks{Laboratoire de Math\a'ematiques (UMR 6056),
CNRS \&\ Univer\-sit\a'e de Reims Cham\-pagne-Ardenne, Moulin de la
Housse, B.P. 1039, F-51687 Reims Cedex 2, France. \emph{Email:
}{\tt satyanad.kichenassamy@univ-reims.fr} } }
\date{}%
\begin{document}

\maketitle

\begin{quote}
Appeared in: HANDBOOK OF DIFFERENTIAL EQUATIONS\\
Stationary Partial Differential Equations, volume 3\\
Edited by M. Chipot and P. Quittner
Elsevier, 2006, pp.~401--464.
\end{quote}

\tableofcontents
\vfill\eject

\begin{quote}{\small
{\bf Abstract.}  (Publisher summary.) The Schauder estimates are among the oldest and most useful tools in the modern theory of elliptic partial differential equations (PDEs). Their influence may be felt in practically all applications of the theory of elliptic boundary-value problems, that is, in fields such as nonlinear diffusion, potential theory, field theory or differential geometry and its applications. Schauder estimates give H\"older regularity estimates for solutions of elliptic problems with H\"older continuous data; they may be thought of as wide-ranging generalizations of estimates of derivatives of an analytic function in the interior of its domain of analyticity and play a role comparable to that of Cauchy's theory in function theory. They may be viewed as converses to the mean-value theorem: a bound on the solution gives a bound on its derivatives. Schauder theory has strongly contributed to the modern idea that solving a PDE is equivalent to obtaining an a priori bound that is, trying to estimate a solution before any solution has been constructed. The chapter presents the complete proofs of the most commonly used theorems used in actual applications of the estimates.}
\end{quote}

\section{Introduction}

The Schauder estimates are among the oldest and most useful tools
in the modern theory of elliptic partial differential equations
(PDEs). Their influence may be felt in practically all
applications of the theory of elliptic boundary-value problems,
that is, in fields such as nonlinear diffusion (in biology or
environmental sciences), potential theory, field theory, or
differential geometry and its applications.

Generally speaking, Schauder estimates give H\"older regularity
estimates for solutions of elliptic problems with H\"older
continuous data; they may be thought of as wide-ranging
generalizations of estimates of derivatives of an analytic
function in the interior of its domain of analyticity (Cauchy's
inequalities) and play a role comparable to that of Cauchy's
theory in function theory. They may be viewed as converses to the
mean-value theorem: a bound on the solution gives a bound on its
derivatives. The estimates generally become false if H\"older
continuity is replaced by mere continuity.

Schauder estimates have three aspects, corresponding to three
different ways of applying them:
\begin{itemize}
\item[(i)]
they are regularity results:
solutions with minimal regularity must be as regular as data
permit;

\item[(ii)]
they give the boundedness of the inverse of certain
elliptic operators;

\item[(iii)]
they give the compactness of these
inverses.
\end{itemize}

Schauder theory has strongly contributed to the modern
idea that solving a PDE is equivalent to obtaining an \emph{a
priori} bound, that is, trying to estimate a solution
\emph{before} one has constructed any solution.

We aim in the following pages to give the reader the means to make
use of the recent literature on the subject. We assume the reader
is familiar with the basic facts of Functional Analysis and
elliptic theory (see \cite{B}). For this reason, we give complete
proofs of the most commonly used theorems used in actual
applications of the estimates; we then survey the main
generalizations, with emphasis on recent work. General references
on Schauder estimates and their applications include
\cite{Aubin,CH,GT,han-lin,SK-fr,LU,LS,Morrey,N73,Shimakura,R,S,Sm}

\subsection{What are Schauder-type estimates?}

It is convenient to distinguish four kinds of estimates: interior,
weighted interior, boundary, and Fuchsian estimates. Each of them
is further divided into second-order and first-order estimates. We
begin with the second-order estimates.

The interior Schauder estimate expresses that, if $L$ is a
second-order elliptic operator $L$ with H\"older-continuous
coefficients,\footnote{See section 2 below for the definition of
the regularity classes used in this paper. Recall that an operator
$L=\sum_{ij}a^{ij}\pa_{ij}+b^i\pa_i+c$ is elliptic if the
quadratic form $a^{ij}\xi_i\xi_j$ is positive-definite.} the
$C^\alpha$ norm of any second-order derivative of $u$ on the ball
of radius $r$ is estimated by the sum of the $C^\alpha$ norm of
$Lu$ on the ball of radius $2r$, and the supremum of $u$ on the
same ball. It therefore contains the following information:
\begin{itemize}
\item[(i)]
$u$ is \emph{as smooth as the data allow}: even though $Lu$ is
just one particular combination of $u$ and its derivatives of
order two or less, the H\"older continuity of $Lu$ implies the
same regularity for all second-order derivatives;

\item[(ii)]
 the
regularity of $u$ is \emph{local}, in the sense that we require no
smoothness assumption on the value of $u$ on the boundary of the
ball of radius $2r$;

\item[(iii)]
the set of all functions $u$ such that
$\sup |u|$ and $\|Lu\|_{C^\alpha(|x|<2r)}$ are bounded by a fixed
constant $M$ is \emph{relatively compact} in the $C^{2}$ topology
of the ball of radius $r$.
\end{itemize}

The boundary Schauder estimate expresses that if $Lu=f$ on a
bounded domain of class $C^{2+\alpha}$ and if $u$ is equal on the
boundary to a function of class $C^{2+\alpha}$, then $u$ is of
class $C^{2+\alpha}$ up to the boundary.

The scaled, or weighted interior estimates, in their simplest
form, express that, if $Lu=f$ is $C^\alpha$, and $f$ is bounded,
then, as one tends to the boundary, (i) $\nabla u$ blows up at
most like $d^{-1}$, where $d$ is the distance to the boundary, and
$\nabla^2u$ like $d^{-2}$; (ii) the expression $|\nabla^2
u(P)-\nabla^2 u(Q)|/|P-Q|^\alpha$ is estimated by
$C(\min(d(P),d(Q))^{-2-\alpha}$.\footnote{Following common
practice, we use the "variable constant convention" according to
which the same letter $C$ is used to denote constants which may
change from line to line. It is consistent as long as (i) the
context makes clear on what quantities the constants depend and
(ii) one is not interested in the value of the constant, but only
in its existence. The convention may have been influenced by an
observation by Schauder to the effect that the best constants in
Schauder estimates are not well understood.} In particular, this
estimate does not imply that $d^2u$ is of class $C^{2+\alpha}$ up
to the boundary.

The Fuchsian estimates express that, in the above situation,
$d^2u$ is of class $C^{2+\alpha}$ up to the boundary provided that
(i) the $a^{ij}/d^2$ and $b^i/d$ satisfy a H\"older condition
near the boundary and
(ii) \emph{either} $L$ satisfies additional sign conditions on the
lower-order terms and $u$ is bounded, \emph{or} both $u$ and $f$
satisfy a flatness condition at the boundary. Condition (i) is
reminiscent of the scaling behavior of ODE of Fuchs-Frobenius
type, hence the terminology.

First-order estimates are similar, with the difference that they
give $C^{1+\alpha}$ regularity of the solution if $Lu$ is merely
bounded; the conditions on the coefficients of $L$ are also
slightly weaker than in the $C^{2+\alpha}$ case. First-order
estimates are often as useful as the second-order estimates, and
may generalize to nonlinear operators such as the $p$-Laplacian,
for which the $C^{2+\alpha}$ estimates are false.

\subsection{Why do we need Schauder estimates?}

Schauder estimates form the basis of very general existence
theorems, because the compactness information they contain makes
it possible to apply fixed-point theorems for compact operators
(see \cite{B,LS,N73,R,Sm}).

Schauder theory has many applications beyond existence theorems;
we mention: asymptotic behavior, at infinity or near
singularities; properties of eigenfunctions (Riesz-Fredholm theory
\cite{B} , Krein-Rutman theorem \cite{KR}); the method of sub- and
super-solutions for nonlinear problems, and bifurcation theory
(see \cite{N73,S,Sm}).

Schauder theory has not been rendered obsolete by the more recent
developments of Sobolev theory and variational methods, for the
following reasons:
\begin{itemize}
\item[(i)] Schauder theory applies to problems without
variational structure;

\item[(ii)] it produces existence results without
assuming uniqueness;

\item[(iii)] it is more convenient than Sobolev
theory in the sense that functions in $H^k$ are H\"older
continuous only for $k$ greater than the number of space
dimensions.
\end{itemize}

Of course, modern studies of nonlinear problems often
use Sobolev or de Giorgi-Nash theory to obtain a modicum of
regularity, and improve it using the Schauder estimates.

This survey is by no means an exhaustive report on regularity
theory; in particular, the de Giorgi-Nash theorem on H\"older
estimates for operators with bounded measurable coefficients, and
the literature it gave rise to, is not discussed. Special results
on particular equations such as the Monge-Amp\`ere equation, or
the Laplace equation on polyhedral domains, are only briefly
discussed. Regularity estimates for parabolic problems, including
probabilistic methods for diffusion processes \cite{SV}, fall
outside the scope of this volume devoted to stationary problems,
although many of the techniques are similar to those in the
stationary case.

\subsection{Why so many methods of proof?}

The wide variety of methods for the derivation of Schauder
estimates may be understood as follows: all proofs require the
following ingredients:
\begin{itemize}
  \item An estimate for a model problem
  (the Laplacian on the unit ball of $\RR^n$, a
  half-space, or a half-ball).
  \item A scaling argument which transfers estimates to balls of radius $R$.
  \item A linear change of coordinates which yields the result for
  constant-coefficient operators.
  \item A passage to continuous coefficients.
\end{itemize}
The second step is often formulated in terms of weighted norms
involving the distance to the boundary;
the third is achieved using Korn's device, which consists in
comparing the given operator with the operator with
coefficients ``frozen'' at one point; the fourth is streamlined
by the use of interpolation inequalities for (weighted) H\"older
norms. In fact, the first three steps follow from the invariance
properties of the Laplace operator; the variety of proofs
essentially comes from the different ways to exploit these
properties of the Laplace operator.

The period 1882-1934 has seen the emergence of \emph{derivations}
of estimates of which the definitive form was only gradually
discovered. In this first stage, one notices a tendency to try and
replace the estimation of Green's function by a direct estimation
of the solution. As a result, solution methods based on the
construction of integral operators which provide (approximate)
inverses developed separately from regularity theory for nonlinear
problems, and eventually gave rise to pseudo-differential
analysis.

The subsequent period, say from 1934 to 1964, was devoted to a
streamlining and elucidation of the methods, and culminated in the
generalization to systems \cite{ADN} of Schauder's estimates. At
the end of this period, estimates on Green's function had been evacuated
from the variable-coefficient case. They would re-appear
indirectly with the introduction of pseudo-differential inverses
of elliptic operators, but pseudo-differential techniques with
symbols of limited regularity are still not very well understood
\cite{taylor}.

Once the estimates had been discovered, it became possible to look for
\emph{verifications}: efficient ways to prove that the
estimates hold once they have been proved by other
methods. This search has brought about a change in perspective,
triggered by the needs of new applications: once the passage from
constant to variable coefficients had been streamlined, it became
clear that potential theory was still needed to prove the
estimates in the case of the Laplacian. In other words, all of the
refinements of Schauder theory were ultimately based on the direct
proof of the estimates for the Laplacian. And the various proofs
in this case ultimately make use of the invariance of the
Laplacian under translation, rotation and scaling.

Now, a problem in which a singularity occurs at a specific point
in space cannot be translation-invariant. The first step in
handling such problems would be to consider scaled Schauder
estimates in balls which become smaller as one approaches the
singularity; these ``blow-up'' methods lead naturally to weighted
Schauder estimates.\footnote{The question of
behavior at infinity is of a similar nature,
because infinity may be replaced by an isolated singularity by inversion.}
This approach does not yield optimal regularity. From 1990
onwards, the author showed that the correct regularity, first for
hyperbolic problems, and more recently, for elliptic problems, may
be understood by reducing the problem to a local model
of the typical form
\bel{p-fe} d^2\Delta u + \lambda d\nabla d\cdot\nabla u+\mu
u=f(d,P), \eel
where $d$ is the distance to the singularity locus;
$\lambda$ and $\mu$ are
usually constants.\footnote{In some cases, it is convenient to
allow them to be operators.} This leads to the Fuchsian estimates
mentioned above, which form part of a systematic technique
for finding the asymptotic behavior of solutions
(see \cite{SK-fr,SK-be,SK-ln} and sections 5 and 6.9).

Note that Fuchsian operators had been studied for their own sake
from the 1970s onwards, but the results obtained at this time were
slightly weaker than those required for application to nonlinear
problems. Of course, the idea of Fuchsian reduction is not to be
found in earlier work.

\subsection{Classification of proofs}

The various approaches to the estimates differ in their treatment
of the model case, and the characterization of H\"older continuity
they use. The modern theory is dominated by the fact that interior
$H^k$ estimates for harmonic functions are now considered more or
less obvious (see the beginning of the proof of theorem
\ref{th:cm2}).

A first proof is based on the direct estimation of the Newtonian
potential \cite{Holder,Hopf31,S31,S34a,S34b,DN,N53,GT}. A second
proof is based on the search for comparison functions, and
therefore uses only the maximum principle
\cite{Brandt66,Brandt69}. A third proof rests on the dyadic
decomposition of the Fourier transform of $u$ \cite{St}. A variant
may be based on a characterization of H\"older continuity
by mollification generalizing a property of the Poisson kernel \cite{T}.
A fourth proof rests on an integral
characterization of H\"older continuity
\cite{Campanato64,Campanato65,Giaquinta,Meyers,Morrey54}.
The regularity problem
for minimal surfaces has led to a fifth approach: consider scaled
versions of the graph of $u$ corresponding to smaller and smaller
scales and characterize their limit by a Liouville theorem
\cite{Simon}. A sixth proof consists in rescaling $u-P$ where $P$
is a second-degree polynomial \cite{caf-ams,cab-caf,han2}.

\subsection{Generalizations and variants}

The most important cases to which the second-order estimates on
bounded smooth domains may be generalized are: higher-order
equations of Agmon-Douglis-Nirenberg type, for which it is
possible to find a fundamental solution for a model problem with
constant coefficients \cite{ADN} and equations on unbounded
domains \cite{cc,pr}. Scaling interior estimates yields several,
non-equivalent results \cite{cc,SK-be,GH,Tr}. The first-order
estimates may hold under weaker conditions on the coefficients
\cite{Cordes59,Cordes61,caf-am}. It is also possible to obtain
estimates in cases when the r.h.s. is only H\"older with respect
to some of the variables \cite{fife}. Slightly stronger results
hold in two dimensions \cite[Ch.~12]{GT}. A simple example in
which the model problem is quite difficult is the case of the
Laplace equation on a polyhedral domain.\footnote{Separation of
variables shows that the smoothness of harmonic functions on a
wedge-like domain, with Dirichlet conditions, depends on the
opening angle of the wedge.}

Higher-order estimates may be obtained in the obvious manner, by
differentiating the equation, provided the nonlinearities are
smooth. The Schauder estimates are actually true for certain fully
nonlinear equations with non-smooth nonlinearities
\cite{BE,cab-caf,GT}.

There are cases in which the model case is not a linear,
constant-coefficient problem: for instance,
\begin{itemize}

\item[(i)] the
$p$-Laplacian---also invariant under a similar group---has the
property that solutions with right-hand side zero are not
necessarily of class $C^2$ (see \cite{evans,SK-th});

\item[(ii)] Fuchsian operators also admit non-smooth solutions with smooth data
\cite{GS,SK-be,SK-ln};

\item[(iii)] sub-elliptic operators, such as those
related to Carnot groups, are not close to the Laplacian either
\cite{capogna-han};

\item[(iv)] even the Laplacian on polyhedral
domains presents new features not found in the regularity theory
in smooth domains. All this led to a very recent surge of activity
on very simple models. Since the simplest non-trivial model beyond
the Laplacian is the Fuchsian case, we briefly explain how such
problems arise naturally.
\end{itemize}

When trying to generalize Schauder estimates to problems with
boundary degeneracy, we saw that the local model is not the
Laplacian any more: it is a problem with quadratic degeneracy of
special form; it is \emph{scale-invariant} but not
\emph{translation-invariant}. Let us mention a few further
contexts where such PDEs arise:
Axisymmetric potential theory leads to problems with singular
coefficients such as the (elliptic) Euler-Poisson-Darboux equation
\bel{epd} u_{rr}+\frac\lambda r u_r+u_{zz}=-4\pi\rho, \eel
where $\lambda$ is a constant. Many authors, especially Alexander
Weinstein (and Hadamard) stressed long ago the usefulness of this
equation and noted its remarkable behavior under transformations
of the form $u\mapsto r^\gamma u(r,z)$. It may be treated within
the framework of the general theory of degenerate
elliptic PDEs (Fichera), writing it in the form
\[ru_{rr}+\lambda  u_r+ru_{zz}=-4\pi r\rho.
\]
In this form, the problem is reminiscent of Legendre's equation,
which also admits a linear degeneracy (at $\pm 1$). Motivated by
the search for a higher-dimensional generalization of the
expansion into Legendre functions to several variables, a general
theory of the Dirichlet problem for elliptic equations with linear
degeneracy on the boundary was developed in the 1960s and 1970s.
The prototype of such problems is
\bel{p-fe1} d\Delta u + \lambda \nabla d\cdot\nabla
u=f(P),\quad\text{in } \Omega \eel
where $d$ is a smooth function of $P\in\Omega$, equivalent to the
distance to $\pa\Omega$ near the boundary. An analogue of Schauder
estimates may be derived by an explicit computation of Green's
function if $\lambda>0 $\cite{GS}; the essential step is the
analysis of a model problem on a half-plane, by Laplace transform
in the normal variable. This method does not seem to generalize to
the case of quadratic degeneracy.

These considerations took a new meaning when, in the 1990s, one
realized that nonlinear problems give rise, by a systematic
process of reduction (see \cite{SK-gaeta,SK-fr}), to problems
modeled upon the general Fuchsian-type problem
\bel{p-fe2} d^2\Delta u + \lambda d\grad d\cdot\grad u+\mu
u=f(P),\quad\text{in } \Omega.
\eel
Because of the quadratic degeneracy, the Laplace transform is not
helpful. Nevertheless, it is possible to analyze indirectly this
model problem (see \cite{SK-fr,SK-be,SK-ln} and sections 5 and
6.7).

\subsection{What process were the Schauder estimates discovered by?}

Many steps in the derivation of the Schauder estimates become
clearer if one recalls the historical development which led from
potential theory to the Schauder estimates. For this reason, we
give a historical sketch, starting from Poisson (1813).

\subsubsection{Does Poisson's equation hold?}

Consider the Newtonian potential in three dimensions:
\bel{i1} V(P)=\int_{\RR^3} \frac{\rho(Q)}{|P-Q|}dQ \eel
where $P\in\RR^3$ and $|P-Q|$ is the distance from $P$ to $Q$ and
integrals are extended over $\RR^3$. This integral represents, up
to a constant factor, the gravitational potential generated by the
mass density $\rho(Q)$, if $\rho\geq 0$. If $\rho$ takes
positive and negative values, it may be interpreted in terms of an
electrostatic potential. If the density is bounded and has limited
support, $V$ is defined by a convergent integral, and so is the
corresponding force field proportional to the gradient of $-V$,
formally given by
\[
-\nabla V(P)=\int \frac{Q-P}{|P-Q|^3}\rho(Q)dQ.
\]
If $P$ lies outside the support of $\rho$, the integral may be
differentiated again, to yield Laplace's equation
\[ \Delta V= 0,
\]
where $\Delta=\sum_{i=1}^3\pa_i^2$. However, if $\rho(P)\neq 0$,
differentiation of the force field yields a divergent integral,
because $1/|P-Q|^3$ is not integrable. Poisson (1813) showed that,
if $\rho$ is constant in the neighborhood of $P$, $V$ nevertheless
satisfies Poisson's equation at the point $P$:
\bel{p3} -\Delta V = 4\pi\rho, \eel
Indeed, one may split the density into two parts: a constant
density in a ball around $P$, and a density which vanishes in a
neighborhood of $P$. The first part yields a potential which may
be computed exactly: it is quadratic near $P$; the second yields a
solution of Laplace's equation. Gauss \cite{Gauss} then proved
that Poisson's equation is valid if the density is continuously
differentiable. After investigations by Riemann, Dirichlet and
Clausius, H\"older (1882) \cite{Holder} proved that the second
derivatives of the potential are continuous, and that Poisson's
equation holds, under the \emph{H\"older condition of order
$\alpha$}
\bel{hc} |\rho(P)-\rho(Q)|\leq C|P-Q|^\alpha, \eel
for some $\alpha\in(0,1)$. In fact, the second derivatives of $V$
also satisfy a H\"older condition. Furthermore, if $\rho$ is
merely continuous, the first-order derivatives of $V$ satisfy a
H\"older condition for any $\alpha$. The argument was streamlined
by Neumann \cite{Neumann}.\footnote{The continuity of $\rho$ is
not sufficient to ensure that $V$ is twice continuously
differentiable.
Necessary and sufficient conditions for the existence of second
derivatives were studied by Petrini.} This substantiates Poisson's
idea that the potential should be well-approximated by a quadratic
function near every point where $\rho$ is well-approximated by a
constant.

\subsubsection{Emergence of the Dirichlet problem}

At the same time it became clear that the Newtonian potential is
merely one among all possible solutions of Poisson's equation; in
fact, solutions may be parameterized by their values on the
boundary of sufficiently smooth bounded domains
$\Omega\subset\RR^3$: this leads us to \emph{Dirichlet problem}
\bel{dp} \left\{
\begin{array}{rll}
-\Delta V &= 4\pi\rho &\text{in }\Omega\\ V&= g &\text{on
}\pa\Omega
\end{array}
\right. \eel
It seemed at first sight that the Dirichlet problem should have a
unique solution on the grounds that it should represent the
equilibrium potential in $\Omega$ when the potential is prescribed
on the boundary and continuous. Dirichlet and Riemann worked on
the assumption that $V$ could be obtained by minimizing the
\emph{Dirichlet integral}
\bel{D-int} E[u,\Omega]=\int_\Omega |\nabla u(Q)|^2 dQ \eel
among all sufficiently regular $u$ which agree with $g$ on
$\pa\Omega$. Weierstrass pointed out that such an argument may
fail for certain variational principles, and it was only with the
advent of Hilbert spaces that a justification of this method could
be made, for smooth domains. But then, if we find a function $V$
which admits integrable first-order derivatives and minimizes
Dirichlet's integral, how do we know that it has second-order
derivatives and that it solves Poisson's equation? There is a
second difficulty:  the Dirichlet problem may have no continuous
solution if the boundary presents a sharp inward spike (``Lebesgue
spine''). Even for $\rho=0$, the Poincar\'e \emph{balayage}
method, re-formulated and simplified by Perron into the
\emph{method of sub- and supersolutions}, only proves that, for
continuous $g$, there is a unique solution which is continuous up
to the boundary if $\pa\Omega$ is well-behaved\footnote{For
instance, it is sufficient that $\pa\Omega$ satisfy an exterior
sphere condition. A necessary and sufficient condition is due to
Wiener.} but does not prove that the solution is smooth if the
data ($\Omega$, $\rho$ and $g$) are smooth.

The corresponding issues for equations with variable coefficients
and non-linearities also led to the need for regularity estimates:
the Calculus of Variations and Conformal Mapping led to nonlinear
elliptic equations such as the equation of minimal surfaces and
Liouville's equation ($\Delta u=e^u$) in two variables. Picard
emphasized the advantages of iterative methods for PDEs. Now, if
one wishes to solve iteratively an equation of the form
\[ \Delta u = f(u)
\]
to fix ideas, one should define a sequence of functions obtained
by solving the Poisson equations
\[ \Delta u_n=f(u_{n-1})
\]
with $n=1$, 2,\dots\ In view of the above results, it seems
appropriate to work in a space of functions the second derivatives
of which satisfy a H\"older condition. The first results in this
direction seem to be due to Bernstein (see \cite{BB}). The
\emph{continuity method} may be viewed as a outgrowth of these
efforts. But the iterative approach only allows one to reach
problems close to Poisson's equation. Other approaches, based on
the reduction to an integral equation on the boundary, required
detailed estimates on the Green's function for operators with
variable coefficients. In the course of this development,
estimates for second derivatives of solutions of PDEs with
variable coefficients in $n$ variables were obtained (Korn, E.
Hopf, Giraud, Kellogg, Schauder,\dots, see
\cite{Ca1,Giraud26,Giraud29,Kellogg,Lichtenstein,Hopf31}).

Schauder's approach is different: it reduces the problem to a new
fixed-point theorem: the Leray-Schauder theorem for compact
operators; the compactness is provided by estimates of second
derivatives in H\"older spaces.
Schauder's proof bypasses the construction of Green's
function for variable-coefficient operators, and opens the door to
the solution of wide classes of nonlinear equations.

\subsection{Outline of the paper}

Section 2 collects several characterizations of H\"older spaces,
and gives the main interpolation results which enable the passage
from constant to variable coefficients.

Section 3 illustrates the main proof techniques on the case of the
interior estimates for the Laplacian.

Section 4 deals with the passage from the model case (Laplacian on
a ball) to variable coefficients and general domains.

Section 5 gives the main general-purpose Fuchsian estimates.

Section 6 collects the most important applications; self-contained
proofs of the major topological tools are also included.

\section{H\"older spaces}

\subsection{First definitions}\label{sec:fd}

Let $\Omega\subset\RR^n$ be a domain (\emph{i.e.}, an open and
connected set).

\begin{defn}
A function $u$ is H\"older-continuous at the point $P$ of
$\Omega$, with exponent $\alpha\in(0,1)$, if
\[ [u]_{\alpha,\Omega,P}:=
\sup_{Q\in\Omega, Q\neq P}\frac{|u(P)-u(Q)|}{|P-Q|^\alpha}<\infty.
\]

It is H\"older-continuous over $\Omega$, or \emph{of class}
$C^\alpha(\Omega)$ if it satisfies this condition for every
$P\in\Omega$. We write $ [u]_{\alpha,\Omega}:=\sup_P
[u]_{\alpha,\Omega}$.

It is of class $C^\alpha(\Omega)$ if
\[ \|u\|_{C^\alpha(\Omega)}:=
\sup_\Omega |u| +[u]_{\alpha,\Omega}.
\]
\end{defn}

Functions of class $C^\alpha$ are in particular uniformly
continuous. It $\pa\Omega$ is smooth, one can extend $u$ by
continuity to a continuous function on $\overline\Omega$; for this
reason, it is sometimes convenient to write
$C^\alpha(\overline\Omega)$ for $C^\alpha(\Omega)$ in this case,
to emphasize that $u$ is continuous up to the boundary.

It is easy to check that
\[ [uv]_{\alpha,\Omega}\leq
\|u\|_{C^\alpha(\Omega)}\|v\|_{C^\alpha(\Omega)}.
\]

Higher-order H\"older spaces $C^{k+\alpha}(\Omega)$ are defined in
the natural way: first, write $|\nabla^ku|$ for the sum of the
absolute values of the derivatives of $u$ of order $k$, and define
$[\nabla^ku]_{\alpha,\Omega}$ similarly. Let
\[\|u\|_{C^k(\Omega)}:= \max_{0\leq j\leq k}\sup_\Omega |\nabla^ju|,
\]
and
\[ \|u\|_{C^{k+\alpha}(\Omega)}:=
\|u\|_{C^k(\Omega)} +[\nabla^k u]_{\alpha,\Omega}.
\]
In all these norms, the reference domain $\Omega$ will be omitted
whenever it is clear from the context.

\subsection{Dyadic decomposition}

The H\"older spaces defined above are all Banach spaces, but
smooth functions are not dense in them: even in one dimension, if
$(f_m)$ is a sequence of smooth functions and $f\in C^\alpha(\RR)$
is such that $\|f-f_m\|_{C^\alpha(\RR)}\to 0$, one proves easily
that for any $P$ and any $\ep>0$, there is a neighborhood of $P$
on which $|f(P)-f(Q)|\leq \ep|P-Q|^\alpha$. In other words,
$\lim_{Q\to P}|f(P)-f(Q)||P-Q|^{-\alpha}=0$. Any function $f$
which does not satisfy this property cannot be approximated by
smooth functions in the $C^\alpha$ norm.

Nevertheless, there is a systematic way to decompose
H\"older-continuous functions on $\RR^n$ into a uniformly
convergent sum of smooth functions: define the Fourier transform
of $u$ by
\[ \hat u(\xi) = \int_{\RR^n} e^{-ix\cdot\xi}u(x)\;dx
\]
and consider $\varphi\in C^\infty_0(\RR)$ such that
$0\leq\varphi\leq 1$, $\varphi=1$ for $|x|\leq 1$, $\varphi=0$ for
$|x|\geq 0$. Define
\[ \hat u_0=\varphi(|\xi|)\hat u(\xi);\quad
   \hat u_j=[\varphi(2^{-j}|\xi|)-\varphi(2^{-(j-1)}|\xi|)]\hat
   u(\xi)\text{ for }j\geq 1.
\]
We let $\hat v_j=\hat u_0+\cdots+\hat u_j$.
\begin{defn} The decomposition
\[ u = \sum_{j\geq 0} u_j
\]
is the Littlewood-Paley (LP), or dyadic decomposition of $u$
\cite{St}.
\end{defn}
By Fourier inversion, we have
\[ u_j=\psi_j * u\text{ with }\psi_j(x)=2^{jn}\psi(2^jx),
\]
where $\psi(x)=(2\pi)^{-n}\int_{\RR^n}
[\varphi(|\xi|/2)-\varphi(|\xi|)]\exp(ix\cdot\xi)d\xi$.
Note that $\hat\psi$ vanishes near the origin; in
particular, $\hat\psi_j(0)=\int_{\RR^n}\psi_jdx=0$.
\btl{c-alph-char} Let $0<\alpha<1$.
\begin{enumerate}
  \item (Bernstein's inequality.)
  There is a constant $C$ such that,
  for any $k$, $\sup_x(|\nabla^k u_j|+|\nabla^k v_j|)\leq C 2^{jk}\sup_x |u(x)|$.
  \item If $u\in C^\alpha(\RR^n)$, there is a constant $C$
  independent of $j$ such that
          $\sup_x |u_j(x)| \leq C 2^{-j\alpha} \|u\|_{C^\alpha}$.
  \item Conversely, if the above inequality holds for every $j\geq
  1$, then $u\in C^\alpha(\RR^n)$.
\end{enumerate}
\etl
\begin{proof}
(1) On the one hand, we have $|u_j(x)|\leq \|\psi\|_{L^1}\sup |u|$
and $|v_j(x)|\leq \|\phi\|_{L^1}\sup |u|$. On the other hand, if
$a$ is a multi-index of length $k$,
\begin{align*}
  |\nabla^a u_j(x)| & =
      \left| \int u(y)2^{jk}\nabla^a\psi[2^j(x-y)]2^{jn}dy \right|\\
   & = C 2^{jk}\sup |u|.
\end{align*}
The result follows.

(2) Since $\int\psi(y)dy=0$, $u_j$ may be written, for $j\geq 1$,
\[u_j(x)=\int [u(x-y)-u(x)]2^{jn}\psi(2^jy)dy
=\int [u(x-z/2^j)-u(x)]\psi(z)dz.
\]
If $u\in C^\alpha$, it follows that
\[ |u_j(x)|\leq 2^{-j\alpha}[u]_\alpha\int |z|^\alpha |\psi(z)|dz,
\]
QED.

(3) Conversely, if the $u_j$ are of order $2^{-j\alpha}$, the
series $u_0+u_1+\cdots$ converges uniformly. Call its sum $u$; it
is readily seen that the $u_j$ do give its LP decomposition. We
may apply (1) to $u_{j-1}+u_j+u_{j+1}$, and obtain
\[ \sup_x |\nabla u_j(x)|\leq C 2^{j(1-\alpha)}.
\]
Writing $u=v_{j-1}+w_j$, where $w_j=u_j+u_{j+1}+\cdots$, we find
that
\begin{align*}
|u(x)-u(y)| &\leq \sum_{j>k}|x-y|\sup|\nabla u_j|+2\sup|w_j| \\
            &\leq C|x-y|(1+\cdots+2^{(j-1)(1-\alpha)})+C2^{-j\alpha}\\
            &\leq C[2^{-j\alpha}+|x-y| 2^{j(1-\alpha)}].
\end{align*}
Choose $j$ such that $2^{-j}\leq|x-y|\leq 2^{-(j-1)}$. A bound on
$[u]_{\alpha}$ follows.
\end{proof}

\subsection{Weighted norms}

Several of the results we shall prove estimate the H\"older norm
of a function $u$ on a ball of radius $R$ in terms of bounds on
the ball of radius $2R$ with the same center. In order to exploit
these inequalities in a systematic fashion, it is useful to define
H\"older norms weighted by the distance to the boundary.

Let $\Omega\neq \RR^n$ and let $d(P)$ denote the distance from $P$
to $\pa\Omega$, and
\[ d_{P,Q}=\min(d(P),d(Q)).
\]
Let also $\delta$ be a smooth function in all of $\Omega$ which is
equivalent to $d$ for $d$ sufficiently small.\footnote{Such a
function is easy to construct if $\Omega$ is bounded and smooth.
Note that even in this case, $d$ is smooth only near the boundary;
see section 2.5.} Define, for $k=0$, $1,\dots$,
\[ \|u\|_{k,\Omega}^\#=\sum_{j=0}^k \sup_\Omega d^j|\nabla^j u|,
\]
and
\[ \|u\|_{k+\alpha,\Omega}^\#=\sum_{j=0}^k \|\delta^j
u\|_{C^{j+\alpha}(\Omega)},
\]
The spaces corresponding to these norms are called
$C^k_\#(\Omega)$, $C^{k+\alpha}_\#(\Omega)$. The space
$C^{k+\alpha}_*(\Omega)$ has the norm
\[ \|u\|_{k+\alpha,\Omega}^*=
\|u\|_{k,\Omega}^*+[u]_{k+\alpha,\Omega},
\]
where
\[ \|u\|^*_{k,\Omega}=\sum_{j=0}^k[u]_{j,\Omega}^*,
\]
with $[u]_{k,\Omega}^*=\sup_\Omega d^k|\nabla^k u|$, and
\[[u]_{k+\alpha,\Omega}^*=
\sup_{P,Q\in\Omega} d_{P,Q}^{k+\alpha} \frac{|\nabla^k
u(P)-\nabla^k u(Q)|}{|P-Q|^\alpha}.
\]
We also need the further definitions:
\[ [u]_{\alpha,\Omega}^{(\sigma)}=\sup_{P,Q\in\Omega} d_{P,Q}^{\alpha+\sigma}
\frac{|u(P)-u(Q)|}{|P-Q|^\alpha};
\qquad
\|u\|_{\alpha,\Omega}^{(\sigma)}=\sup_\Omega
|d^\sigma u|+[u]_{\alpha,\Omega}^{(\sigma)}.
\]

As before, the mention of $\Omega$ will be omitted whenever
possible.

\subsection{Interpolation inequalities}

\btl{int-l} For any $\ep>0$, there is a constant $C_\ep$ such that
\begin{align*}
[u]_1^* &\leq \ep [u]_2^* + C_\ep \sup |u|\\
[u]_1^* &\leq \ep [u]_{1+\alpha}^* + C_\ep \sup |u|\\
[u]_2^* &\leq \ep [u]_{2+\alpha}^* + C_\ep [u]_1^*\\
[u]_{1+\alpha}^*&\leq  \ep [u]_2^* + C_\ep \sup |u|.
\end{align*}
\etl
\begin{proof}
Recall the elementary inequality, for $C^2$ functions of one
variable $t\in[a,b]$,\footnote{For the proof, write
$f'(t)=f'(s)+\int_s^t f''(\tau)d\tau$, where $s$ satisfies
$f'(s)=(f(b)-f(a))/(b-a)$.}
\[ \sup |f'|\leq \frac 2{b-a}\sup |f|+(b-a)\sup |f^{\prime\prime}|.
\]
Fix $\theta\in (0,1/2)$, and $P\in\Omega$. Let $r=\theta d(P)$. If
$Q\in B_r(P)$, and $Z\in\pa\Omega$, we have
\[ |Z-Q| \geq |Z-P|-|P-Q|\geq d(P)(1-\theta)\geq \frac 12 d(P)\geq
r\geq |P-Q|.
\]
It follows in particular that $d(Q)\geq d(P)(1-\theta)\geq \frac12
d(P)$, hence
\[ d_{P,Q}\geq \frac 12 d(P).
\]
Applying the elementary inequality to $u$ restricted to the
segment $[P,P+re_i]$,\footnote{By the choice of $r$, this segment
lies entirely within $\Omega$.} where $e_i$ is the $i$-th basis
vector, we find
\[ |\pa_iu(P)|\leq \frac2r\sup_{B_r}|u|+r\sup_{B_r}|\pa_{ii}u|.
\]
It follows that
\[ \sup_{B_r}|\pa_{ii}u|\leq \sup d(Q)^{-2}\sup
d(Q)^2|\pa_{ii}u|\leq \frac{[u]_2^*}{d(P)^2(1-\theta)^2}.
\]
Therefore,
\[ [u]_1^*=\sup_{B_r}|d(Q)\pa_iu(Q)|\leq\frac
2\theta\sup|u|+\frac\theta{(1-\theta)^2}[u]_2^*.
\]
If we choose $\theta$ so that $\theta(1-\theta)^{-2}\leq\ep$, we
arrive at the first of the desired inequalities.

For the second inequality, we note that, using again the
mean-value theorem, there is on the segment  $[P,P+re_i]$ some
$\tilde P$ such that $|\pa_i u(\tilde P)|\leq (2/r)\sup_{B_r}|u|$.
It follows that
\begin{align*}
|\pa_iu(P)| &\leq |\pa_iu(\tilde P)|+|\pa_iu(P)-\pa_iu(\tilde P)|
\\
            &\leq \frac 2r \sup_\Omega |u| \\
            &\quad +
                  (\sup_{Q\in B_r(P)} d_{P,Q}^{-1-\alpha})
                  |P-\tilde P|^\alpha
                  \sup_{Q\in B_r(P)} d_{P,Q}^{1+\alpha}
                  \frac{|\nabla u(P)-\nabla u(Q)|}{|P-Q|^\alpha}\\
            &\leq \frac 2r \sup_\Omega |u| +
                  (2/d(P))^{1+\alpha}(\theta d(P))^\alpha
                  [u]_{1+\alpha}^*.
\end{align*}
Multiplying through by $d(P)=r/\theta$, we find the second
inequality.

A similar argument gives the third and fourth inequalities.
\end{proof}

\subsection{Properties of the distance function}
\label{sec:prel}

We prove a few properties of the function $d(x)=\mathop{\rm
dist}(x,\pa\Omega)$, when $\Omega$ is bounded with boundary of
class $C^{2+\alpha}$. Without smoothness assumption on the
boundary, all we can say is that $d$ is Lipschitz; indeed, since
the boundary is compact, there is, for every $x$ a $z\in\pa\Omega$
such that $d(x)=|x-z|$. If $y$ is any other point in $\Omega$, we
have $d(y)\leq |y-z|\leq |y-x|+|x-z|=|y-x|+d(x)$. It follows that
$|d(x)-d(y)|\leq |x-y|$. For more regular $\pa\Omega$, we have the
following results:
\btl{5} If $\pa\Omega$ is bounded of class
$C^{2+\alpha}$,
\begin{enumerate}
  \item there is a $\delta>0$ such that every point such that
  $d(x)<\delta$ has a unique nearest point on the boundary;
  \item in this domain, $d$ is of class $C^{2+\alpha}$;
  furthermore, $|\nabla d|=1$, and
\[-\Delta d=\sum_j \frac{\kappa_j}{1-\kappa_jd},
\]
where $\kappa_1,\dots,\kappa_{n-1}$ are the principal curvatures
of $\pa\Omega$.
In particular, $-\Delta d/(n-1)$ is equal to the mean curvature of
the boundary.
\end{enumerate}
\etl
\begin{proof}
We work near the origin, which we may take on $\pa\Omega$. Our
proofs will give local information near the origin, which can be
made global by a standard compactness argument.

Choose the coordinate axes so that $\Omega$ is locally represented
$\{x_n>h(x')\}$, where $x'=(x_1,\dots,x_n)$ and $h$ is of class
$C^{2+\alpha}$ with $h(0)=0$ and $\nabla h(0)=0$. We may also
assume that the axes are rotated so that the Hessian
$(\pa_{ij}h(0))$ is diagonal. Its eigenvalues are, by definition,
the principal curvatures $\kappa_1,\dots,\kappa_{n-1}$ of the
boundary. Their average is, again by definition, the mean
curvature of the boundary.

At any boundary point, the vector with components
\[(\nu_i)=(-\pa_1 h,\dots,-\pa_{n-1}h,1)/\sqrt{1+|\nabla h|^2}
\] is the \emph{inward}
normal to $\pa\Omega$ at that point. One checks
$\pa_j\nu_i(0)=-\pa_{ij}h(0)=\kappa_j\delta_{ij}$ for $i$ and $j$
less than $n$. Thus, $\nu$ is of class $C^1$. For any $T>0$ and
$y\in\RR^{n-1}$, both small, consider the point
$x(Y,T)=(Y,h(Y))+T\nu(Y)$; this represents the point obtained by
traveling the distance $T$ into $\Omega$, starting from the
boundary point $(Y,h(Y))$, and traveling along the normal. We
write
\[ \Phi : (Y,T)\mapsto x(Y,T).
\]
We want to prove that all points in a neighborhood of the boundary
are obtained by this process, in a unique manner: in other words,
$(Y,h(Y))$ is the unique closest point from $x(Y,T)$ on the
boundary, provided that $T$ is positive and small. It suffices to
argue for $Y=0$; in that case, since $h$ is $C^2$, it is bounded
below by an expression of the form $a|Y|^2$, which implies that
for $T$ sufficiently small, the sphere of radius $T$ about
$x(Y,T)$ contains no point of the boundary except the
origin.\footnote{Indeed, the equation of this sphere is
$x_n=T-\sqrt{T^2-|Y|^2}$, which, by inspection, is bounded below
by $a|Y|^2$ for $2aT< 1$.} We may now consider the new coordinate
system $(Y,T)$ thus defined. We compute, for $Y=0$, but $T$ not
necessarily zero,
\[ \frac{\pa x_i}{\pa Y_j}=\delta_{ij}(1-\kappa_j T)
\]
for $i$ and $j<n$, while
\[ \frac{\pa x_n}{\pa Y_j}=\frac{\pa x_i}{\pa T}=0;\quad
\frac{\pa x_n}{\pa T}=1.
\]
The inverse function theorem shows that, near the origin, the map
$\Phi$ and its inverse are of class $C^1$, and that the Jacobian
of $\Phi^{-1}$ is, for $Y=0$,
\[\frac{\pa(Y,T)}{\pa x}=
\mathop{\rm diag}(\frac 1{1-\kappa_1 T},\dots,\frac
1{1-\kappa_{n-1} T},1).
\]
In fact, $\Phi^{-1}$ is of class $C^{1+\alpha}$. Indeed, $\Phi$
has this regularity, and the differential of $\Phi^{-1}$ is given
by $[\Phi'\circ\Phi^{-1}]^{-1}$, and the map $A\mapsto A^{-1}$ on
invertible matrices is a smooth map. Since $\nu(Y)$, which is
equal to the gradient of $d$, is a $C^{1+\alpha}$ function of $Y$,
we see that it is also a $C^{1+\alpha}$ function of the $x$
coordinates. It follows that $d$ is of class $C^{2+\alpha}$. The
computation of the second derivatives of $d$ is now a consequence
of the computation of the first-order derivatives of $\nu$.

It follows from this discussion that $T=d$ near the boundary, and
that $|\nabla d|=1$; in fact $\nabla d=\nu$.
\end{proof}

\subsection{Integral characterization of H\"older continuity}

Let $\Omega$ be a bounded domain. Write $\Omega(x,r)$ for
$\Omega\cap B(x,r)$. We assume that the measure of $\Omega(x,r)$
is at least $Ar^n$ for some positive constant $A$, if $x\in\Omega$
and $r\leq 1$. This condition is easily verified if $\Omega$ has a
smooth boundary. Define the average of $u$:
\[ u_{x,r}=|\Omega(x,r)|^{-1}\int_{\Omega(x,r)}u\,dx.
\]
\btl{char-c-al} The space $C^\alpha(\Omega)$ coincides with the
space of (classes of) measurable functions which satisfy
\[ \int_{\Omega(x,r)}|u(y)-u_{x,r}|^2\,dy\leq Cr^{n+2\alpha}
\]
for $0<r<\mathop{\rm diam}\Omega$. The smallest constant $C$,
denoted by $\|u\|_{\mathcal L^{2,n+2\alpha}}$ is equivalent to the
$C^\alpha(\Omega)$ norm. \etl
\begin{remark}
If one defines $\mathcal L^{p,\lambda}$ by the property:
$\int_{\Omega(x,r)}|u(y)-u_{x,r}|^p\,dy\leq Cr^{\lambda}$, with
$n<\lambda < n+p$, one obtains a characterization of the space
$C^{(\lambda-n)/p}$.
\end{remark}
\begin{proof}
The integral estimate is clearly true for H\"older continuous
functions. Let us therefore focus on the converse. We first prove that
$u$ is uniformly approximated by its averages, and then derive a modulus
of continuity for $u$.

If $x_0\in\Omega$ and $0<\rho<r\leq 1$, we have
\begin{align*}
A\rho^n |u_{x_0,\rho}-u_{x_0,r}|^2
&\leq \int_{\Omega(x_0,\rho)}|u_{x_0,\rho}-u_{x_0,r}|^2dx\\
&\leq 2\left(
\int_{\Omega(x_0,\rho)}|u-u_{x_0,\rho}|^2dx +
\int_{\Omega(x_0,r)}|u-u_{x_0,r}|^2dx
       \right)\\
&\leq C(r^\lambda+\rho^\lambda).
\end{align*}
Letting $r_j=r2^{-j}$ and $u_j=u_{x_0,\rho_j}$ for $j\geq 0$,
we find
\[ |u_{j+1}-u_j|\leq C 2^{j(n-\lambda)/2}r^{(\lambda-n)/2}
                  =  C2^{-j\alpha}r^\alpha.
\]
For almost every $x_0$, the Lebesgue differentiation theorem
ensures that $u_j\to u(x_0)$ as $j\to\infty$. It follows that
\[ |u(x_0)-u_{x_0,r}|\leq \sum_j|u_{j+1}-u_j|\leq Cr^\alpha.
\]
Since $u_{x,r}$ is continuous in $x$ and converges uniformly as
$r\to 0$, it follows that $u$ may be identified, after
modification on a null set, with a continuous function.

To estimate its modulus of continuity, we need the following
result:
\begin{lemma}
Let $u\in \mathcal L^{2,n+2\alpha}$, $x$, $y$ two points in
$\Omega$, and $r=|x-y|$; we have
\[ |u_{x,r}-u_{y,r}|\leq C r^\alpha.
\]
\end{lemma}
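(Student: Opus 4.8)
The plan is to compare both averages $u_{x,r}$ and $u_{y,r}$ to a single auxiliary average over a slightly larger ball, namely $u_{y,2r}$; this sidesteps the need to exhibit a ball contained in $\Omega$ and in both $B(x,r)$ and $B(y,r)$, which would be delicate when $\Omega$ is not convex. First I would reduce to the case where $r=|x-y|$ is small enough that the radii $r$ and $2r$ both lie in the ranges for which the measure hypothesis ($s\le 1$) and the integral estimate ($s<\mathop{\rm diam}\Omega$) are assumed: for $r$ bounded below by the corresponding fixed $r_0>0$ the inequality is immediate with a large constant, since the first part of the proof has already identified $u$ with a bounded continuous function, whence $|u_{x,r}-u_{y,r}|\le 2\sup|u|\le (2r_0^{-\alpha}\sup|u|)\,r^\alpha$.

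The one geometric input is the inclusion $B(x,r)\subseteq B(y,2r)$, valid because $|x-y|=r$; hence $\Omega(x,r)\subseteq\Omega(y,2r)$. I would then run the same Cauchy--Schwarz/measure computation as in the preceding lines: using $|\Omega(x,r)|\ge Ar^n$, the pointwise bound $|u_{x,r}-u_{y,2r}|^2\le 2|u(z)-u_{x,r}|^2+2|u(z)-u_{y,2r}|^2$, and integration over $\Omega(x,r)$ (enlarging the second integral to $\Omega(y,2r)$), one obtains
\[
Ar^n\,|u_{x,r}-u_{y,2r}|^2\le 2\!\int_{\Omega(x,r)}\!\!|u-u_{x,r}|^2\,dz+2\!\int_{\Omega(y,2r)}\!\!|u-u_{y,2r}|^2\,dz\le C\bigl(r^{n+2\alpha}+(2r)^{n+2\alpha}\bigr),
\]
where the last inequality is the defining property of $\mathcal L^{2,n+2\alpha}$ applied twice. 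Dividing by $Ar^n$ and taking square roots gives $|u_{x,r}-u_{y,2r}|\le Cr^\alpha$.

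Next I would invoke the concentric estimate already proved above, applied to the two concentric balls $B(y,r)\subseteq B(y,2r)$: this is the $\rho=r$, outer radius $2r$ instance of the inequality $A\rho^n|u_{x_0,\rho}-u_{x_0,r}|^2\le C(r^\lambda+\rho^\lambda)$ with $\lambda=n+2\alpha$, and it yields $|u_{y,r}-u_{y,2r}|\le Cr^\alpha$. The triangle inequality
\[
|u_{x,r}-u_{y,r}|\le|u_{x,r}-u_{y,2r}|+|u_{y,2r}-u_{y,r}|\le Cr^\alpha
\]
then finishes the proof.

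The only step that needs any thought is the first one---producing a common reference average for the two off-center balls---and routing everything through $u_{y,2r}$ via the inclusion $B(x,r)\subseteq B(y,2r)$ makes it painless; the rest is the measure/Cauchy--Schwarz bookkeeping already rehearsed earlier in the proof, with the sole caveat, handled at the outset, that all radii occurring ($r$ and $2r$) must remain in the admissible ranges.
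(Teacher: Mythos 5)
Your proof is correct and follows essentially the same route as the paper's: the key inclusion $\Omega(x,r)\subset\Omega(y,2r)$, the measure lower bound $Ar^n$ on the common region, and two applications of the defining Campanato estimate. The only difference is bookkeeping: the paper compares $u_{x,2r}$ with $u_{y,2r}$ over the intersection $\Omega(x,2r)\cap\Omega(y,2r)\supset\Omega(x,r)$ (so its display literally bounds the $2r$-averages), whereas you route through $u_{y,2r}$ and add the concentric-ball estimate plus a triangle inequality, which yields the stated bound for the $r$-averages directly.
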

\begin{proof}
We may assume $r=|x-y|\leq 1$. If $z\in
B_r(x)$, we have $|z-y|\leq r+|x-y|\leq 2r$. Therefore
$\Omega(y,2r)\supset\Omega(x,r)$. It follows that
$\Omega(x,2r)\cap\Omega(y,2r)\supset\Omega(x,r)$ has measure $Ar^n$
at least. We therefore have
\begin{align*}
|\Omega(x,2r)\cap\Omega(y,2r)|&|u_{x,2r}-u_{y,2r}| \\
  &\leq
  \int_{\Omega(x,2r)}|u(z)-u_{x,2r}|dz+
  \int_{\Omega(y,2r)}|u(z)-u_{y,2r}|dz \\
  &\leq
  \left[\int_{\Omega(x,2r)}|u(z)-u_{x,2r}|^2dz\right]^{1/2}|\Omega(x,2r)|^{1/2}\\
  &\qquad + \left[\int_{\Omega(y,2r)}|u(z)-u_{y,2r}|^2dz\right]^{1/2}|\Omega(y,2r)|^{1/2}\\
  &\leq Cr^{\alpha+n/2}r^{n/2}.
\end{align*}
It follows that
\[ |u_{x,2r}-u_{y,2r}|\leq C A^{-1}r^\alpha,
\]
\end{proof}
To conclude the proof of the theorem, it suffices to estimate
$|u(x)-u(y)|$ by $|u(x)-u_{x,r}|+|u_{x,r}-u_{y,r}|+|u_{y,r}-u(y)|
\leq 2Cr^\alpha +|u_{x,r}-u_{y,r}|$.
\end{proof}

\section{Interior estimates for the Laplacian}
\subsection{Direct arguments from potential theory}

Let $n\geq 2$, and let $B_R(P)$ denote the open ball of radius $R$
about $P$. Mention of the point $P$ is omitted whenever this does
not create confusion. The volume of $B_R$ is $\omega_nR^n$ and its
surface $n\omega_n R^{n-1}$. The Newtonian potential in $n$
dimensions is
\[ g(P,Q)=\frac{|P-Q|^{2-n}}{(2-n)n\omega_n}\text{ for }n\geq 3
\]
and
\[ \frac 1{2\pi}\ln |P-Q|\text{ for }n=2.
\]
It is helpful to note that
\begin{enumerate}
  \item The derivatives of $g$ of order $k\geq 1$ w.r.t.\ $P$ are $O(|P-Q|^{2-n-k})$.
  \item The average of each of these second derivatives over the
  sphere $\{Q : |P-Q|=\text{const.}\}$ vanishes.\footnote{To
  check this, it is useful to note that the average of $x_i^2/r^2$
  over the unit sphere $\{r=1\}$ is equal to $\frac1n$, and
  similarly, using symmetry, the average of $(x_i-y_i)(x_j-y_j)/|x-y|^2$
  over the set $\{|x-y|=\text{const.}\}$ vanishes for $i\neq j$.}
\end{enumerate}
Next, consider, for $f\in L^1\cap L^\infty(\RR^n)$, the integral
\[
u(P)=\int_{\RR^n} g(P,Q)f(Q)\,dQ.
\]
We wish to estimate $u$ and its derivatives in terms of bounds on
$f$. Because of the behavior of $g$ as $P\to Q$, $g$ and its first
derivatives are locally integrable, but its second derivative is not.

It is easy to see that, if the point $P$ lies outside the
support of $f$, $u$ is smooth near $P$ and satisfies
$\Delta u=0$. For this reason, it suffices to study the
case in which the density $f$ is supported in a neighborhood of
$P$.

We prove in the next three theorems: (i) a pointwise bound on $u$
and its first-order derivatives; (ii) a representation of the
second-order derivatives which involves only locally integrable
functions; (iii) a direct estimation of
$\nabla^2u(P)-\nabla^2u(Q)$ using this representation.

\bt If $f$ vanishes outside $B_R(0)$, we have
\[ \sup_{B_R}(|u|+|\nabla u|)\leq C R^2\sup f,
\]
and $\nabla u$ is given by formally differentiating the integral
defining $u$. \etl
\begin{proof}
Consider a cut-off function
$\varphi_{\ep}(P,Q):=\varphi(|P-Q|/\ep)$, where $\varphi(t)$ is
smooth, takes its values between 0 and 1, vanishes for $t\leq 1$
and equals 1 for $t\geq2$. Considering the functions
\[ u_\ep(P)=\int g(P,Q)\varphi_\ep(P,Q)f(Q)dQ,
\]
which are smooth, it is easy to see that the $\pa_iu_\ep$ converge
uniformly, as $\ep\downarrow 0$, to $\int\pa_i g(P,Q)f(Q)dQ$.
Similarly, $u_\ep$ converges to $u$. Therefore, $u$ is
continuously differentiable. Using the growth properties of $g$
and its derivatives, we may estimate $\pa_iu(P)$ by
\[C\int_{B_{2R}(P)}C|P-Q|^{1-n}\sup|f|dQ,
\] because $B_R(0)\subset
B_{2R}(P)$. Taking polar coordinates centered at $P$, the result
follows.
\end{proof}
\vskip 1em
The case of second derivatives is more delicate, since the second
derivatives of $g$ are not locally integrable. We know since
Poisson that the integral defining $u$ is smooth near $P$ if $f$
is constant in a neighborhood of $P$. This suggests a reduction to
the case in which $f$ vanishes at $P$. We therefore first prove,
for such $f$, a representation of the second-order
derivatives which circumvents the fact that the second-order
derivatives of $g$ are not integrable.
\btl{wij} If $f$ has support in a bounded neighborhood $\Omega$ of
the origin, with smooth boundary, and if $f\in C^\alpha(\RR^n)$
for some $\alpha\in(0,1)$, then all second-order derivatives of
$u$ exist, and are equal to
\[ w_{ij}:=\int_\Omega \pa_{ij}g(P,Q)[f(Q)-f(P)]dQ
-f(P)\int_{\pa\Omega}\pa_i g\, n_j ds(Q),
\]
where derivatives of $g$ are taken with respect to its first
argument, and $n_j$ are the components of the outward normal to
$\pa\Omega$.
\etl
\begin{proof}
To establish the existence of second derivatives, we consider
\[ v_{i\ep}(P)=\int \pa_i g(P,Q)\varphi_\ep(P,Q)f(Q)dQ,
\]
which converges pointwise to $\pa_iu(P)$; in fact, since
$1-\varphi_\ep$ is supported by a ball of radius $2\ep$, a direct
computation yields $|u_i-v_{i\ep}|(P)=O(\ep\sup|f|)$. Now, writing
$P=(x_i)$ and $Q=(y_i)$, we have
\begin{align*}
\pa_jv_{i\ep}(P)=\int_\Omega\pa_{x_j}&(\varphi_\ep\pa_{x_i}g)(P,Q)[f(Q)-f(P)]dQ\\
&+f(P)\int_\Omega(\varphi_\ep\pa_{x_i}g)(P,Q)dQ.
\end{align*}
Now, since $\varphi_\ep$ and $g$ only depend on $|P-Q|$, we may
replace $\pa/\pa x_j$ by $-\pa/\pa y_j$ and integrate by parts.
This yields
\begin{align*}
\pa_jv_{i\ep}(P)=\int_\Omega\pa_{x_j}&(\varphi_\ep\pa_{x_i}g)(P,Q)[f(Q)-f(P)]dQ\\
&-f(P)\int_{\pa\Omega}\varphi_\ep\pa_{x_i}g(P,Q)n_j(Q)ds(Q).
\end{align*}
We may now estimate the difference $\pa_jv_{i\ep}-w_{ij}$ using
the same method as for the first-order derivatives. It follows
that $\pa_{ij}u=w_{ij}$.
\end{proof}
\vskip 1em
We now give the main estimate for second-order derivatives.
\bt Let
\[u(P)=\int_{B_{2R}(0)} g(P,Q)f(Q)\,dQ,
\]
where $f\in C^\alpha(B_{2R})$, with $0<\alpha<1$. Then
\begin{equation}
\sup_{B_R}|\nabla^2u|+[\nabla^2u]_{\alpha, B_R} \leq
C(\sup_{B_{2R}}|f|+R^\alpha[f]_{\alpha,B_{2R}}).
\end{equation}
\etl
\begin{proof}
We wish to estimate the regularity of $\pa_{ij}u$; we therefore
study $|\pa_{ij}u(P)-\pa_{ij}u(P')|$, for $P$, $P'$ in $B_R(0)$,
where the second derivatives are given by the expressions in
the previous theorem. The main step is to decompose the first
integrand in the resulting expression for $w_{ij}(P)-w_{ij}(P')$
into
\[[f(Q)-f(P')][\pa_{ij}g(P,Q)-\pa_{ij}g(P',Q)]+[f(P')-f(P)]\pa_{ij}g(P,Q).
\]
We therefore need to estimate the following quantities
\begin{enumerate}
  \item[(I)] $f(P)[\pa_ig(P,Q)-\pa_ig(P',Q)]$ for $Q\in\pa
  B_{2R}$.
  \item [(II)] $[f(P)-f(P')]\pa_ig(P',Q)$ for $Q\in\pa
  B_{2R}$.
  \item [(III)] $[f(P')-f(P)]\pa_{ij}g(P,Q)$ for $Q\in
  B_{2R}$.
  \item [(IV)] $[f(Q)-f(P')][\pa_{ij}g(P,Q)-\pa_{ij}g(P',Q)]$ for
  $Q\in B_{2R}$.
\end{enumerate}
The first boundary term (I) is easy to estimate using the
mean-value theorem:
\[ |\pa_ig(P,Q)-\pa_ig(P',Q)|\leq
|P-P'|\sup_{\xi\in[P,P']}|\nabla \pa_ig(\xi,Q)|.
\]
Now, since $Q\in \pa B_{2R}$, and $\xi\in B_R$, we have $|\xi-
Q|\geq 2R-R=R$, hence the supremum in the above formula is bounded
by a multiple of $R^{-n}$. Integrating, we get a contribution
$O(|P-P'|/R)$, which is \emph{a fortiori}
$O(|P-P'|^\alpha/R^\alpha)$.

Expression (II) is $O(|P-P'|^\alpha)$ since $f$ is of class
$C^\alpha$.

To estimate (III) and (IV), let $r_0=|P-P'|$ and $M$ be the
midpoint of $[P,P']$. We distinguish two cases: (i) When
$|Q-M|>r_0$, the distance from $Q$ to any point on the segment
$[P,P']$ is comparable to $|Q-M|$; this will enable a direct
estimation of (IV) using the mean-value theorem, and of (III) by
integration by parts. (ii) On the set on which
$|Q-M|\leq r_0$, we may directly estimate the sum of (III) and
(IV); the smallness of the region of integration compensates the
singularity of the derivatives of $g$.

We begin with the first case:
consider first the integral of (III) over the set
\[ A:=\{Q\in B_{2R} :  |Q-M|>r_0.\}.
\]
Its boundary is included in $\pa B_{2R}(0)\cup\pa B_{r_0}(M)$.
Integrating by parts and using the fact that, on
this set, $|P-Q|$ is bounded below by $\min(R,r_0/2)$, we find
that (III) $=O(|P-P'|^\alpha)$. For the term (IV), integrated over
the same set, we estimate $\pa_{ij}g(P,Q)-\pa_{ij}g(P',Q)$ by
$C|P-P'||\xi-Q|^{-n-1}$, for some $\xi\in[P,P']$. Using the
H\"older continuity of  $f$, the integral of (IV) is estimated by
\[ Cr_0\frac{|Q-P'|^\alpha}{|Q-\xi|^{n+1}}.
\]
Its integral over $A$ is estimated by its integral over
\[ A':=\{Q : |Q-M|>r_0.\}.
\]
On $A'$,
\[ |Q-P'|\leq |Q-M|+|M-P'|
           =  |Q-M|+\frac 12 r_0
         \leq \frac 32 |Q-M|.
\]
On the other hand,
\[ |Q-\xi|\geq |Q-M|-|M-\xi|
          \geq  |Q-M|-\frac 12 r_0
          \geq \frac 12 |Q-M|.
\]
Combining the two pieces of information, we find
\begin{align*}
\int_{A'} Cr_0\frac{|Q-P'|^\alpha}{|Q-\xi|^{n+1}}dQ
&\leq Cr_0\int_{A'} |Q-M|^{\alpha-n-1}dQ\\
&=Cr_0\int_{r_0}^\infty r^{\alpha-2}dr
 = C|P-P'|^\alpha.
\end{align*}
This completes the analysis of the integrals of (III) and (IV)
over $A$.

It remains to consider (III) and (IV) over the part of $B_{2R}$ on
which $|Q-M|\leq r_0$. In this case, $|P-Q|\leq
|P-M|+|M-Q|\leq\frac 32 r_0$, and similarly for $|P'-Q|$. We
therefore estimate directly the sum of (III) and (IV), namely
\[[f(Q)-f(P)]\pa_{ij}g(P,Q)-[f(Q)-f(P')]\pa_{ij}g(P',Q),
\]
by
\begin{align*} C[f]_{\alpha,B_{2R}}
\int_{|Q-M|<r_0}&(|Q-P|^{\alpha-n}+|Q-P'|^{\alpha-n})dQ \\ &\leq
C[f]_{\alpha,B_{2R}} \int_0^{3r_0/2}|Q-P|^{\alpha-1}d|Q-P|\leq
Cr_0^\alpha.
\end{align*}
Since $r_0=|P-P'|$, this completes the proof.
\end{proof}

\subsection{$C^{1+\alpha}$ estimates via the maximum principle}

We give two estimates for function such that $\Delta u$ is
bounded. The result is essentially optimal, and relies only on the
maximum principle. The choice of comparison functions is motivated
by numerical approximations for second-order derivatives; in this
sense, the argument may be compared with Nirenberg's
``method of translations'' for the proof of $L^2$-type estimates.
Second-order estimates may also be derived by
this method, but the choice of comparison functions is much more
complicated.

We begin with the $C^1$ estimate.
\btl{c1-mp} If $\Delta u= f$ on $K=\{ |x_i|<s\text{ for
}i=1,\dots, n \}$, then
\[ |\pa_nu(0)|\leq \frac ns \sup_K |u| +\frac d2 \sup_K |f|.
\]
\etl
\begin{proof}
Let $M=\sup_K |u|$, $N=\sup_K |f|$,
\[ v(x',x_n)=\frac 12 [u(x',x_n)-u(x',-x_n)]
\]
and
\[ w(x',x_n)=\frac M{s^2}[|x'|^2+x_n(ns-(n-1)x_n)]+\frac 12 Nx_n(s-x_n).
\]
Applying the maximum principle to $w\pm v$ on $K\cap\{0<x_n<s\}$
we obtain
\[ \frac 1{2x_n}|u(x',x_n)-u(x',-x_n)|\leq \frac
M{s^2}(ns-(n-1)x_n)+\frac N2(s-x_n).
\]
Letting $x_n\to 0$, one finds the desired inequality.
\end{proof}
\vskip 1em
We now turn to the continuity of the gradient of $u$. The result
implies interior $C^{1+\alpha}$ regularity for every $\alpha<1$.
\bt Let $\mu=\sup_K|\nabla u|$. There is a constant $k$ such that:
\[ \frac 12 |\pa_i u(0,x_n)-\pa_i u(0,-x_n)|\leq \mu \frac {x_n}s +
kx_n\ln\frac {x_n}s
\]
for $|x_n|\leq s/4$. \etl
\begin{proof}
It suffices to prove the result for $i=n$ and $i=n-1$.

For the case $i=n$, we consider the function of $n+1$ variables
$(x',y,z)$ defined by
\[ \phi(x',y,z)=\frac 14
[u(x',y+z)-u(x',y-z)-u(x',-y+z)+u(x',-y-z)],
\]
and the operator $L=\sum_{i<n}\pa^2_i+\frac12 (\pa_y^2+\pa_z^2)$,
so that one checks $|L\phi|\leq N$. We then compare $\phi$ with
\[ W = \frac {4M}syz+kyz\ln\frac{2s}{y+z},
\]
where
\[ k=\frac 43(N+\frac{8M}{s^2}(n-1)),
\]
on the set $K'=\{|x_i|<\frac s2\text{ if }i\leq n-1;\quad
0<y,z<\frac s4 \}$. Since
\[ LW=\frac{8M}{s^2}(n-1)+k\left[-1+\frac{yz}{(y+z)^2}\right]
\leq\frac{8M}{s^2}(n-1)-\frac 34 k=-N
\]
on $K'$, the maximum principle yields
\[ |\phi(0,y,z)|\leq y\left[\frac{4\mu}s+k\ln\frac{2s}{y+z}\right].
\]
Letting $z\to 0$ gives the first inequality in the theorem.

For the case $i=n-1$, we work with functions of $n$ variables
$(\tilde x,y,z)$, where $\tilde x=(x_1,\dots,x_{n-2})$, on the set
$K''=\{|x_i|<\frac s2\text{ if }i\leq n-2;\quad 0<y,z<\frac s2
\}$, and the auxiliary functions
\[\psi(\tilde x,y,z)=\frac 14
[u(\tilde x,y,z)-u(\tilde x,y,-z)-u(\tilde x,-y,z)+u(\tilde
x,-y,-z)],
\]
and
\[ \tilde W=\frac{4L}{s^2}|\tilde x|^2+yz\left[\frac{4\mu}s+\tilde
k\ln\frac{2s}{y+z}\right],
\]
where $\tilde k=\frac 23\left[N+\frac{8M}{s^2}(n-2)\right]$. One
finds $|\Delta\psi|\leq N$ and $-\Delta\tilde W\geq N$ on $K''$,
and the maximum principle yields the desired result as before.
\end{proof}

\subsection{$C^{2+\alpha}$ estimates via Littlewood-Paley theory}

LP decomposition provides a simple proof of the basic interior
estimate for the constant-coefficient case. This is essentially
due to the fact that the Fourier transform is rotation-invariant
and has a simple scaling behavior. The argument is however
tailored to isotropic situations. We give the argument in its
simplest form, with no aim at generality.
\btl{sch-lp}
Let $\rho\in C^\alpha(\RR^n)$ be such that $\hat\rho=O(|\xi|^5)$
near $\xi=0$. Then there is a $u\in C^{2+\alpha}(\RR^n)$ such that
$-\Delta u=\rho$. \etl
\begin{remark}
The condition $\hat\rho=O(|\xi|^5)$ means that the first few
moments of $\rho$ vanish; it may be achieved by subtracting from
$\rho$ a smooth potential with prescribed multipolar moments.
\end{remark}
\begin{proof}
Consider the LP decomposition $\rho_0+\rho_1+\cdots$ of $\rho$.
Define $u_j$ by $\hat u_j=\hat\rho_j/|\xi|^{-2}$. Then
$u=u_0+u_1+\cdots$ is well-defined, and the flatness assumption
ensures that $u_0$ and its first two derivatives are bounded. In
particular, $u_0$ is of class $C^{2+\alpha}$. Consider the Fourier
transform of $-\pa_{kl}u$:
\[ \xi_k\xi_l\hat u_j(\xi)
=\frac{\xi_k\xi_l}{|\xi|^{2}}\hat\rho(\xi)\hat\psi(2^j|\xi|)
=\hat\rho(\xi)\frac{(2^j\xi_k)(2^j\xi_l)\hat\psi(2^j|\xi|)}{(2^j|\xi|)^2}.
\]
Recall that $\hat\psi$ is flat at the origin, so that there is no
singularity for $\xi=0$.
Applying point (2) of theorem \ref{th:c-alph-char}
to the decomposition of $\rho$
in which $\psi$ would be replaced by $\psi'$, with
$\hat\psi'(\xi)=\xi_k\xi_l|\xi|^{-2}\hat\psi(|\xi|)$, we find
$\sup|(\pa_{kl}u)_j|=O(2^{-(2+\alpha)j})$. From the characterization of
$C^{2+\alpha}$ spaces, the result follows.
\end{proof}

\subsection{Variational approach}

We turn to a different approach, based on the integral
characterization of H\"older spaces. The techniques involved have
many other applications beyond the one discussed here;
in particular, they allow a ``direct
approach'' to regularity theory for minimizers of coercive
functionals, without having to consider the Euler equation.
We begin with a simple result.
\begin{lemma}\label{lem:1}
For any $u\in H^1(B_R(x_0))$, and any $r<1$, $\int_{B_r}|u-c|^2dx$
is minimum when $c=u_{x_0,r}$ and Poincar\'e's inequality
\[ \int_{B_r(x_0)}|u-u_{x_0,r}|^2dx \leq C\int_{B_r(x_0)}|\nabla
u|^2dx
\]
holds.
If we assume in addition that $u$ is harmonic, and $0<a<1$, we have
\[ \int_{B_{ar}(x_0)}|\nabla u|^2\leq
c(a)r^{-2}\int_{ar<|x-x_0|<r}|u-u_{x_0,r}|^2dx.
\]
The r.h.s.\ is in particular estimated by
$c(a)r^{-2}\int_{B_{r}(x_0)}|u|^2$.
\end{lemma}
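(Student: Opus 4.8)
The plan is to prove the three assertions of the lemma in turn, each by an elementary computation. For the first assertion, I would expand $\int_{B_r}|u-c|^2\,dx$ as a function of $c\in\RR$ (coordinate-wise, if $u$ is vector-valued): it is a quadratic polynomial in $c$ whose minimum, found by setting the derivative to zero, occurs exactly at $c=\frac{1}{|B_r|}\int_{B_r}u = u_{x_0,r}$. Poincar\a'e's inequality in the form stated is then a standard fact about $H^1(B_r)$: one way is to invoke the compactness of the embedding $H^1(B_r)\hookrightarrow L^2(B_r)$ together with the fact that the only functions with $\nabla u=0$ are constants, via the usual contradiction argument; alternatively one rescales to the unit ball, where the constant is universal, and checks that the inequality is scale-invariant (both sides scale like $r^{n}$ once one also rescales $\nabla$). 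I would note the scaling explicitly since the paper cares about the $r$-dependence: on $B_r$ Poincar\a'e reads $\int_{B_r}|u-u_{x_0,r}|^2 \le Cr^2\int_{B_r}|\nabla u|^2$, and the factor $r^2$ is what the statement absorbs into ``$C$'' only because $r<1$.

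For the second assertion, assume $u$ harmonic on $B_R(x_0)$. The key point is that each first derivative $\pa_k u$ is itself harmonic, hence satisfies the interior mean-value/gradient estimate: for $0<a<1$,
\[
\sup_{B_{ar}(x_0)}|\nabla u| \le \frac{C}{((1-a)r)^{n+1}}\int_{\text{(annulus or ball)}}|\nabla u|\,dx
\]
— but it is cleaner to argue with $L^2$ norms. Apply the interior estimate for harmonic functions to the harmonic function $u-u_{x_0,r}$: its gradient on $B_{ar}$ is controlled by its $L^2$-norm on a slightly larger ball, and since $\nabla(u-u_{x_0,r})=\nabla u$, one gets $\int_{B_{ar}}|\nabla u|^2 \le c(a)r^{-2}\int_{B_{r'}}|u-u_{x_0,r}|^2$ for some $a<r'/r<1$. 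To replace the ball $B_{r'}$ by the annulus $ar<|x-x_0|<r$, use the mean-value property: $u-u_{x_0,r}$ has mean zero over each concentric sphere (up to the difference between the spherical average and the solid average $u_{x_0,r}$, which one handles by the same harmonicity), so interior regularity can be applied after a Caccioppoli-type cutoff supported in the annulus $ar<|x-x_0|<r$, killing the contribution of the inner ball. Concretely: multiply $\Delta u=0$ by $\eta^2(u-c)$ with $\eta$ a cutoff equal to $1$ on $B_{ar}$ and supported in $B_r$, integrate by parts, and use $|\nabla\eta|\le C/((1-a)r)$ to obtain $\int_{B_{ar}}|\nabla u|^2 \le \frac{C}{(1-a)^2 r^2}\int_{ar<|x-x_0|<r}|u-c|^2$; taking $c=u_{x_0,r}$ gives the stated bound. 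The final sentence is then immediate: $\int_{ar<|x-x_0|<r}|u-u_{x_0,r}|^2 \le \int_{B_r}|u-u_{x_0,r}|^2 \le 2\int_{B_r}|u|^2 + 2|B_r|\,|u_{x_0,r}|^2 \le C\int_{B_r}|u|^2$ by Cauchy--Schwarz applied to the average.

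The main obstacle is the second displayed inequality, specifically getting the \emph{annulus} (rather than a full ball) on the right-hand side while keeping the sharp $r^{-2}$ weight: this is the step that genuinely uses harmonicity twice — once to say $\pa_k u$ is harmonic (so an interior gradient bound is available) and once, through the Caccioppoli cutoff argument, to localize to the annulus. Everything else (the variational characterization of the average, Poincar\a'e, the passage from $\int|u-u_{x_0,r}|^2$ to $\int|u|^2$) is routine. I would present the Caccioppoli computation in one short display and cite the interior $L^2$ gradient estimate for harmonic functions as known, since the paper has already declared such facts ``more or less obvious.''
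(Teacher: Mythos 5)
Your proposal is correct and follows essentially the same route as the paper: the key second inequality is obtained exactly as in the text, by multiplying $\Delta u=0$ by $\eta^2(u-c)$ with a cutoff $\eta$ equal to $1$ on $B_{ar}$ and supported in $B_r$, integrating by parts, and noting that $\nabla\eta$ is supported in the annulus, while the Poincar\'e inequality and the variational characterization of the mean are treated as standard. The preliminary detour through the harmonicity of $\pa_k u$ and the mean-value property is superfluous (the Caccioppoli computation alone suffices, so harmonicity is used only once), and the last assertion can be had even more directly by taking $c=0$ in the same computation, as the paper remarks, but neither point affects correctness.
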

\begin{proof}
Poincar\'e's inequality is classical, see \emph{e.g.} \cite{B}.
Let $x_0=0$ and choose a smooth, nonnegative function $\eta$
supported by $B_r$, equal to $1$ for $|x|\leq ar$, such that
$|\nabla\eta|\leq C/r$.\footnote{It suffices to find such a
function $\eta_0$ for the case $r=1$ and then let
$\eta(x)=\eta_0(x/r)$.} Multiply the Laplace equation by
$(u-m)\eta^2$, where $m$ is any constant. We find, integrating by
parts,
\[ \int_{B_r}[\eta^2|\nabla u|^2+2(u-m)\nabla\eta\cdot\nabla u]dx=0,
\]
hence, estimating $\nabla\eta$ by $C/r$ and using H\"older's
inequality, we find
\[ \int_{\eta =1}|\nabla u|^2dx\leq \int\eta^2|\nabla u|^2dx
\leq \frac C{r^2}\int_{\nabla\eta\neq 0}|u-m|^2dx.
\]
Taking $m$ to be the average of $u$ on $B_r$, we obtain in
particular the desired inequality. The inequality corresponding to
$m=0$ is also occasionally useful.
\end{proof}
\vskip 1em
Since the derivatives of a harmonic function are themselves
harmonic, this result implies that higher order derivatives are
locally square-integrable; the Sobolev inequality then shows
easily that any harmonic function is smooth. We now turn to a more
precise estimate which enables one to compare a harmonic function and
its spherical mean.
\btl{cm1} Let $u$ be harmonic in the ball of radius $R_0$ about
$x_0\in\RR^n$. If $0<r<R<R_0$, we have
\begin{align}
\sup_{B_{R/2}(x_0)}|u|^2 &\leq CR^{-n}\int_{B_R(x_0)}|u|^2dx \\
\int_{B_r(x_0)}|u|^2dx &\leq C(\frac rR)^n\int_{B_R(x_0)}|u|^2dx
\\ \int_{B_r(x_0)}|u-u_{x_0,r}|^2dx &\leq C(\frac
rR)^{n+2}\int_{B_R(x_0)}|u-u_{x_0,R}|^2dx
\end{align}
where $C$ is independent of $u$, $r$, $R$ and $R_0$. \etl
\begin{proof}
We may take $x_0=0$. It suffices to prove the first inequality for
$R=1$ and scale variables. If $k$ is an integer, we
find, applying the preceding lemma repeatedly, we find
\[ \int_{B_{1/2}}|\nabla^k u|^2dx\leq c(k)\int_{B_1}u^2 dx.
\]
The result follows by the Sobolev inequality.

Regarding the last two inequalities, it suffices to prove them for
$r\leq R/2$ since they are obvious for $r\geq R$. In that case, we
have, using the first inequality,
\[\int_{B_r}|u|^2dx \leq
\omega_nr^n\sup_{B_{R/2}}|u|^2\leq C(\frac rR)^n\int_{B_R}|u|^2dx
\]
as desired. Similarly, since the derivatives of $u$ are also
harmonic, Poincar\'e's inequality yields
\[ \int_{B_r}|u-u_{x_0,r}|^2dx \leq C\rho^2\int_{B_r}|\nabla u|^2dx
\leq C\rho^2(\frac rR)^n\int_{B_{3R/4}(x_0)}|\nabla u|^2dx.
\]
We conclude using lemma \ref{lem:1}.
\end{proof}
\vskip 1em
We turn to the estimation of second derivatives of the solutions
of Poisson's equation $-\Delta v=f$. It is equivalent to seek an estimate
for the \emph{first} derivatives of solutions of $-\Delta u=\pa_k f$.
It turns out to be convenient
to consider more generally the problem
\begin{equation}\label{eq:14}
\Delta u+\sum_k\pa_k f^k =0,
\end{equation}
where $u\in H^1(B_R)$ and the $f^k$ are of class $C^\alpha$.
Recall that function of class $C^\alpha$
correspond to the class $\mathcal L^{2,\lambda}$ for
$\lambda=n+2\alpha$. This suggests the following theorem.
\btl{cm2} Assume that $\mathbf f:=(f_1,\dots,f_n)\in\mathcal L^{2,\lambda}$
with $0\leq\lambda<n+2$, and that $u\in
 H^1(B_R)$ solves (\ref{eq:14}),
then $\nabla u$ is locally of class
$\mathcal L^{2,\lambda}$. In particular, if the $f^k$ are locally
$C^\alpha$, with $0<\alpha<1$, so is $\nabla u$. \etl
\begin{proof}
We must analyze the behavior of the integrals
\[ F(r):=\int_{B_r(x_0)}|\nabla u-(\nabla u)_{x_0,r}|^2dx,
\]
defined for given $x_0\in B_R$ and $r<R-|x_0|$, as $r\to 0$. We
first prove the estimate
\begin{equation}\label{eq:cm}
   F(\rho)\leq A(\rho/r)^{n+2}F(r)+Br^\lambda.
\end{equation}
for $\rho<r$. We then deduce from it an estimate of the form
$F(r)=O(r^\lambda)$, from which the result follows.

Consider the solution of $\Delta v=0$ in $B_r(x_0)$ such that
$u-v\in H^1_0(B_r(x_0))$. From theorem \ref{th:cm1}, we have the
inequality
\[ \int_{B_\rho}|\nabla v-(\nabla v)_{x_0,\rho}|^2dx\leq
C(\frac \rho r)^{n+2}\int_{B_r}|\nabla v-(\nabla
v)_{x_0,\rho}|^2dx.
\]
For any $w\in H^1_0(B_r(x_0))$, we have, writing $\pa_k
f^k=\pa_k(f^k-f^k_{x_0,r})$,
\[\int_{B_r}\nabla (u-v)\cdot\nabla w\;dx=
-\int (\mathbf f-\mathbf f_{x_0,r})\cdot\nabla w\; dx.
\]
Taking $w=u-v$, we find
\[\int_{B_r}|\nabla (u-v)|^2\leq \int_{B_r}|\mathbf f-\mathbf
f_{x_0,r}|^2dx.
\]
From now on, we omit the mention of the point $x_0$ in averages.
Since $\nabla u-(\nabla
u)_\rho=[\nabla v-(\nabla v)_\rho]+[\nabla w-(\nabla w)_\rho]$,
and
\[ \int_{B_\rho}|\nabla w-(\nabla w)_\rho|^2dx\leq
\int_{B_\rho}|\nabla w|^2dx\leq \int_{B_r}|\nabla w|^2dx,
\]
we find
\begin{align*}
\int_{B_\rho}|\nabla u-(\nabla u)_\rho|^2dx &\leq
2\int_{B_\rho}|\nabla v-(\nabla v)_\rho|^2dx +2\int_{B_r}|\nabla
w|^2dx     \\ &\leq C(\frac \rho r)^{n+2}\int_{B_r}|\nabla
v-(\nabla v)_r|^2dx +C\int_{B_r}|\mathbf f-\mathbf f_r|^2dx.
\end{align*}
The second term is $O(r^\lambda)$ thanks to the hypothesis on
$\mathbf f$. We now estimate the first term in terms
of $F(r)$. To this end, we need the following result:
\begin{lemma}
$\int_{B_r}|\nabla v-(\nabla v)_r|^2dx\leq \int_{B_r}|\nabla
u-(\nabla u)_r|^2dx$.
\end{lemma}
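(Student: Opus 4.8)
The plan is to exploit the orthogonal (Pythagorean) decomposition $\nabla u=\nabla v+\nabla(u-v)$ in $L^2(B_r)$ furnished by the variational characterization of $v$, after first checking that the two gradients have the \emph{same} average on $B_r$.

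First I would observe that the mean of $\nabla(u-v)$ over $B_r$ vanishes. Indeed, $u-v\in H^1_0(B_r)$, so for each $k$ the divergence theorem (applied to a sequence in $C^\infty_0(B_r)$ converging to $u-v$) gives $\int_{B_r}\pa_k(u-v)\,dx=0$. Hence $(\nabla(u-v))_r=0$ and therefore $(\nabla u)_r=(\nabla v)_r$; denote this common vector by $c$.

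Next I would use that $v$ is harmonic with $u-v\in H^1_0(B_r)$, so $u-v$ is an admissible test function in the weak formulation $\int_{B_r}\nabla v\cdot\nabla\phi\,dx=0$, which yields $\int_{B_r}\nabla v\cdot\nabla(u-v)\,dx=0$. Combined with the previous step, which makes $\int_{B_r}c\cdot\nabla(u-v)\,dx=c\cdot\int_{B_r}\nabla(u-v)\,dx=0$, this shows that $\nabla v-c$ and $\nabla(u-v)$ are orthogonal in $L^2(B_r)$.

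Finally, writing $\nabla u-c=(\nabla v-c)+\nabla(u-v)$ and expanding the square, the cross term drops out, so
\[
\int_{B_r}|\nabla u-c|^2\,dx=\int_{B_r}|\nabla v-c|^2\,dx+\int_{B_r}|\nabla(u-v)|^2\,dx\geq\int_{B_r}|\nabla v-c|^2\,dx,
\]
which is the asserted inequality since $c=(\nabla u)_r=(\nabla v)_r$. There is no genuine obstacle; the only point demanding care is the identity $(\nabla u)_r=(\nabla v)_r$, which is exactly what forces the cross term to vanish and which rests on the vanishing boundary trace of $u-v$. (Alternatively, one may phrase the same computation using Lemma~\ref{lem:1}, noting that $(\nabla v)_r$ minimizes $\int_{B_r}|\nabla v-\cdot\,|^2$ over constants, so that $\int_{B_r}|\nabla v-(\nabla v)_r|^2\leq\int_{B_r}|\nabla v-(\nabla u)_r|^2$, and then expanding.)
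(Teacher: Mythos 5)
Your proof is correct. It rests on exactly the two facts the paper uses -- that $u-v\in H^1_0(B_r)$ forces $\int_{B_r}\nabla(u-v)\cdot\mathbf g\,dx=0$ for constant $\mathbf g$, and that $v$ is (weakly) harmonic with $u-v$ an admissible test function -- but you organize them differently. The paper factors the difference of the two integrals as
\[
\int_{B_r}\nabla(v-u)\cdot\bigl(\nabla(v+u)-(\nabla(v+u))_r\bigr)dx,
\]
kills the constant part by the mean-zero property, and then invokes the Dirichlet principle $\int_{B_r}|\nabla v|^2\leq\int_{B_r}|\nabla u|^2$ as a separate input. You instead observe that $(\nabla u)_r=(\nabla v)_r=:c$ (which the paper never states explicitly) and perform a genuine Pythagorean decomposition $\nabla u-c=(\nabla v-c)+\nabla(u-v)$, the cross term vanishing by the weak form of harmonicity together with the mean-zero fact. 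Your route is slightly sharper and more self-contained: it produces the identity
\[
\int_{B_r}|\nabla u-(\nabla u)_r|^2dx=\int_{B_r}|\nabla v-(\nabla v)_r|^2dx+\int_{B_r}|\nabla(u-v)|^2dx,
\]
which quantifies the deficit and in effect reproves the energy minimality of $v$ rather than quoting it, whereas the paper's $a^2-b^2$ factorization is shorter if one is willing to use the Dirichlet principle as a black box. Either way the lemma follows, and your handling of the only delicate point -- justifying $(\nabla u)_r=(\nabla v)_r$ via approximation by $C^\infty_0$ functions -- is sound.
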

\begin{proof}
Since $u-v$ is in $H^1_0$, we have
\[ \int_{B_r}|\nabla v|^2dx\leq \int_{B_r}|\nabla u|^2dx,
\]
and, as soon as $\mathbf g$ is constant
\[ \int_{B_r}\nabla (v-u)\cdot \mathbf g dx=0.
\]
It follows that
\begin{align*}
\int_{B_r}|\nabla v&-(\nabla v)_r|^2dx - \int_{B_r}|\nabla
u-(\nabla u)_r|^2dx  \\ &=\int_{B_r}\nabla
(v-u)\cdot(\nabla(v+u)-(\nabla(v+u))_r)dx \\ &= \int_{B_r}(|\nabla
v|^2-|\nabla u|^2)dx\leq 0,
\end{align*}
QED.
\end{proof}
\vskip 1em
The proof of inequality (\ref{eq:cm})is now complete. To
conclude the proof of the theorem, we argue as follows: Fix
$\gamma\in(\lambda,n+2)$, and choose $t\in(0,1)$ such that
\[ 2A t^{n+2}\leq t^\gamma.
\]
Fix $j$ such that $t^{j+1}r<\rho\leq t^jr$. We find, since $F$ is
non-decreasing,
\begin{align*}
F(\rho)\leq F(t^j) &\leq t^\gamma F(t^{j-1}r)+B(t^{j-1}r)^\lambda
\\
                   &\leq t^\gamma[t^\gamma
                   F(t^{j-2}r)+B(t^{j-2}r)^\lambda]+B(t^{j-1}r)^\lambda\\
&=  t^{2\gamma}F(t^{j-2}r)+Br^\lambda
t^{(j-1)\lambda}[1+t^{\gamma-\lambda}]\\ &\leq\cdots\leq
t^{j\gamma}F(r)+Br^\lambda
\frac{t^{(j-1)\lambda}}{1-t^{\gamma-\lambda}}\\ &\leq
t^{-\gamma}(\frac \rho r)^\gamma F(r)
      + \frac {Bt^{-2\lambda}}{1-t^{\gamma-\lambda}}(\frac \rho
      r)^\lambda.
\end{align*}
Since $\gamma>\lambda$, this implies $F(\rho)=O(\rho^\lambda)$ as
desired. This gives the H\"older regularity of the gradient of
$u$.
\end{proof}
\begin{remark}
For more general problems, it is useful to note that the last part
of the argument also applies in the more general situation in
which $F$ is a non-negative, non-decreasing function satisfying
\[ F(\rho)\leq A[(\rho/r)^a+\ep]F(r)+Br^b,
\]
for $0<\rho<r\leq R$, with $a>b$. Taking $t$ as before, we find
that if $\ep\leq t^a$, then $F$ satisfies an inequality of the
form
\[ F(\rho)\leq c(a,b,A)[(\rho/r)^bF(R)+B\rho^b].
\]
\end{remark}
\begin{remark}
A similar argument may be applies to non-divergence operators with
H\"older-continuous coefficients, using the previous remark. As
expected, the argument consists in writing the operator as the sum
of a constant-coefficient operator, and an operator with small
coefficients.
\end{remark}

\subsection{Other methods}

We briefly outline two other approaches.

\subsubsection{A regularization method}

For any standard mollifier $\rho_\ep(x)=\ep^{-n}\rho(x/\ep)$,
consider, for any $u(x)$,
\[ u_\ep(x)=\rho_\ep * u=\int \rho(z)u(x-\ep z)dz.
\]
It is easy to see that $u(x)-u_\ep(x)=O(\ep^\alpha)$ if $u\in
C^\alpha(\RR^n)$, and that, similarly, the derivatives of $u_\ep$
with respect to $\ep$ or $x$ are $O(\ep^{\alpha-1})$. Conversely,
if these derivatives are $\leq M\ep^{\alpha-1}$, it turns out that
one may estimate the H\"older constant of $u$: first of all,
\[  |u(x)-u(x_\ep)|
\leq \ep\int_0^1 |\frac{\pa u}{\pa\ep}|(x,\ep\sigma)d\sigma \leq
M\ep^\alpha\int_0^1 \frac{d\sigma}{\sigma^{1-\alpha}}=
\frac{M\ep^\alpha}{\alpha}.
\]
We now find the estimate
\begin{align*}
|u(x)-u(y)| &= |u(x)-u(x_\ep)|+|u(x_\ep)-u(y_\ep)|+|u(y_\ep)-u(y)|
\\
            &= \frac{2}{\alpha}M\ep^\alpha+
               |x-y||\nabla_x u_\ep(z)|
\end{align*}
for some $z\in[x,y]$. One then estimates $|\nabla_x u_\ep(z)|$ by
$M\ep^{\alpha-1}$ and takes $\ep=|x-y|$. One also proves that
mixed derivatives of $u_\ep$ with respect to $x$ and $\ep$ are
controlled by the H\"older norm of second derivatives of $u$ with
respect to $x$.

One then considers equation $-\Delta u=f(x)=f(x_0)+g(x)$, where
$|g|\leq R^\alpha[f]_{\alpha,B_R(x_0)}$. We have $-\Delta
u_\ep=g_\ep$. If $\nabla^2$ represents any second-order
derivative, in $x$ and $\ep$, we find $-\Delta
\nabla^2u_\ep=\nabla^2g_\ep$. Applying the interior $C^1$ estimate
theorem \ref{th:c1-mp}, and letting $R=N\ep$, where $N$ is to be
chosen later, one gets
\[
\ep^{1-\alpha}|\nabla\pa_{ij}u(x_0,\ep)|\leq
C\{N^{\alpha-1}[\nabla^2_xu]_\alpha
+N^{\alpha+1}R^{-\alpha}\sup_{B_{(1+N)\ep}(x_0)}|g|\}
\]
where one has estimated $\sup_{B_R(x_0)}|\nabla^2 g|$ by
$\ep^{-2}\sup_{B_{R+\ep}(x_0)}|g|$. This quantity is itself
estimated by $[f]_\alpha(R+\ep)^\alpha$. Taking $N$ so large that
$CN^{\alpha-1}<1/2$, we find an estimate of $[\nabla^2_x
u]_\alpha$, as desired.

\subsubsection{Blow-up method}

We sketch the idea of the proof of the interior estimate for the
Laplacian; a similar idea, with somewhat more complicated details,
applies to other situations.

Assume there is no estimate of the form $[\nabla^2u]_\alpha\leq
C[\Delta u]_\alpha$ for functions of class $C^{2+\alpha}(\RR^n)$.
In that case, there must be some sequence $u_k$ such that
\[ [\nabla^2u_k]_\alpha=1 > 2k [\Delta u_k]_\alpha.
\]
We may therefore find indices $i$ and $j$, and sequences $x_k$,
$a_k$ of vectors in $\RR^n$ such that
\[
1\geq \frac 1{|a_k|^\alpha}|\pa_{ij}u_k(x_k)-u_k(x_k+a_k)| \geq
\frac 12 \geq k[\Delta u_k]_\alpha.
\]
Subtracting an affine function, we may assume that $u_k$ and its
first-order derivatives vanish at the point $x_k$. Subtracting a
quadratic function, we may also assume that $\Delta u_k$ vanishes
at $x_k$. Performing a ($k$-dependent) rotation of axes, we may
also assume that $a_k=(h_k,0,\dots,0)$. Considering
$v_k(y)=h_k^{-(2+\alpha)}u_k(x_k+h_k y)$, we see that $[\Delta
v_k]_\alpha=[\Delta u_k]_\alpha\to 0$, while $v_k$ and its
first-order derivatives vanish at the origin and grows at most
like $|y|^{2+\alpha}$ at infinity. In addition, we have
\begin{equation}\label{eq:spr}
|\pa_{ij}v_k(0)-v_k(e_1)|\geq \frac12 .
\end{equation}
where $e_1=(1,0,\dots,0)$. After extraction of a subsequence, we
are left with a harmonic function which grows at most like
$|y|^{2+\alpha}$, and satisfies equation (\ref{eq:spr}). A variant of
the Liouville property ensures that $v$ is quadratic, which
contradicts (\ref{eq:spr}).

\section{Perturbation of coefficients}

\subsection{Basic estimate}

Working on a relatively compact subset $\Omega'$ of $\Omega$,
we may assume that $[u]^*_{2+\alpha}<\infty$;
since the constants in the various
inequalities will not depend on the choice of $\Omega$, the full
result will follow.

Consider $x_0\in\Omega$ and let $r=\theta d(x_0)$ with $\theta\leq
1/2$. Let $L_0=\sum_{ij}a^{ij}(x_0)\pa_{ij}$ (the ``tangential
operator,'' with coefficients `` frozen'' at $x_0$). We define
\[ F:= L_0u=
\sum_{ij}(a^{ij}(x_0)-a^{ij}(x))\pa_{ij}u-\sum b^i\pa_i u-cu+f.
\]
We apply the constant-coefficient interior estimates on the ball
$B_r(x_0)$. Let $y_0\neq x_0$ such that $d(y_0)\geq d(x_0)$.

If $|x_0-y_0|<r/2$, we have
\[ (\frac r2)^{2+\alpha}[\nabla^2u]_{\alpha,x_0,y_0}\leq
   C(\sup|u|+\sup_{B_r}|r^2F|+\sup_{B_r\times B_r} r^{2+\alpha}
   \frac{|F(x)-F(y)|}{|x-y|^\alpha}).
\]
Therefore,
\begin{equation}\label{eq:theta-est}
d(x_0)^{2+\alpha}[\nabla^2u]_{\alpha,x_0,y_0}\leq
C\theta^{-2-\alpha}(\sup |u|+\|F\|^{(2)}_{\alpha,B_r}).
\end{equation}
If $|x_0-y_0|\geq r/2$, we have
\begin{equation}\label{eq:theta-est2}
 d(x_0)^{2+\alpha}[\nabla^2u]_{\alpha,x_0,y_0}\leq
   2[u]^*_2\frac{d(x_0)^\alpha}{|x_0-y_0|^\alpha}\leq
   2[u]^*_2(\frac 2\theta)^\alpha.
\end{equation}
The issue is therefore the estimation of
$\|F\|^{(2)}_{\alpha,B_r}$ in terms of norms of $u$ and its
derivatives over $\Omega$.

For clarity, we begin with three lemmas.
\begin{lemma}
$\|uv\|^{(s+t)}_{\alpha,\Omega}\leq
\|u\|^{(s)}_{\alpha,\Omega}\|v\|^{(t)}_{\alpha,\Omega}$.
\end{lemma}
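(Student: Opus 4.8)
The plan is to reduce the weighted product estimate to the ordinary Leibniz-type inequality for H\"older seminorms, paying attention only to how the weight exponent $s+t$ is distributed between the two factors. Write $\|uv\|^{(s+t)}_{\alpha,\Omega}=\sup_\Omega|d^{s+t}uv|+[uv]^{(s+t)}_{\alpha,\Omega}$ and treat the two summands separately.

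For the supremum term, factor $d^{s+t}uv=(d^su)(d^tv)$ pointwise, so that $\sup_\Omega|d^{s+t}uv|\leq\sup_\Omega|d^su|\cdot\sup_\Omega|d^tv|$, which is at most $\|u\|^{(s)}_{\alpha,\Omega}\|v\|^{(t)}_{\alpha,\Omega}$. For the seminorm term, fix $P,Q\in\Omega$ and use the algebraic identity $(uv)(P)-(uv)(Q)=u(P)\,[v(P)-v(Q)]+v(Q)\,[u(P)-u(Q)]$. Multiply by $d_{P,Q}^{\alpha+s+t}|P-Q|^{-\alpha}$ and split the weight as $d_{P,Q}^{\alpha+s+t}=d_{P,Q}^{s}\cdot d_{P,Q}^{\alpha+t}$ in the term containing $u(P)$ and as $d_{P,Q}^{\alpha+s+t}=d_{P,Q}^{t}\cdot d_{P,Q}^{\alpha+s}$ in the term containing $v(Q)$. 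Since $d_{P,Q}=\min(d(P),d(Q))\leq d(P)$ we get $d_{P,Q}^{s}|u(P)|\leq\sup_\Omega|d^su|$, and since $d_{P,Q}\leq d(Q)$ we get $d_{P,Q}^{t}|v(Q)|\leq\sup_\Omega|d^tv|$; the remaining weighted difference quotients are bounded by $[v]^{(t)}_{\alpha,\Omega}$ and $[u]^{(s)}_{\alpha,\Omega}$ respectively. Taking the supremum over $P,Q$ yields
\[
[uv]^{(s+t)}_{\alpha,\Omega}\leq\sup_\Omega|d^su|\cdot[v]^{(t)}_{\alpha,\Omega}+\sup_\Omega|d^tv|\cdot[u]^{(s)}_{\alpha,\Omega}.
\]

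Adding the two contributions and comparing with the expansion $\|u\|^{(s)}_{\alpha,\Omega}\|v\|^{(t)}_{\alpha,\Omega}=\bigl(\sup_\Omega|d^su|+[u]^{(s)}_{\alpha,\Omega}\bigr)\bigl(\sup_\Omega|d^tv|+[v]^{(t)}_{\alpha,\Omega}\bigr)$, one sees that every term produced on the left is dominated; the only difference is the extra nonnegative cross term $[u]^{(s)}_{\alpha,\Omega}[v]^{(t)}_{\alpha,\Omega}$ on the right. Hence the inequality follows. There is no serious obstacle here; the one point requiring care is the bookkeeping of the weight split — one must pair the exponent $s$ with the factor evaluated at $P$ and the exponent $t$ with the factor evaluated at $Q$ in the identity above, so that the inequalities $d_{P,Q}\leq d(P)$ and $d_{P,Q}\leq d(Q)$ are each used where they apply.
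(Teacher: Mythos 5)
Your proof is correct, and it is exactly the ``direct verification'' that the paper leaves to the reader: split off the weighted sup term, use the Leibniz identity $(uv)(P)-(uv)(Q)=u(P)[v(P)-v(Q)]+v(Q)[u(P)-u(Q)]$, and distribute the weight $d_{P,Q}^{\alpha+s+t}$ so that $d_{P,Q}\leq d(P)$ and $d_{P,Q}\leq d(Q)$ apply where needed (with $s,t\geq 0$, as in all the paper's uses). Nothing further is required.
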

\begin{proof}
Direct verification.
\end{proof}
\begin{lemma}
If $r=\theta d(x,\pa \Omega)$, with $0<\theta\leq 1/2$ (so that
$B_r(x)\subset \Omega$), we have
\begin{align}
\|\nabla^2u\|^{(2)}_{\alpha,B_r} &\leq
8[\theta^2\|\nabla^2u\|^{*}_{2,\Omega}+\theta^{2+\alpha}[u]^*_{2+\alpha,\Omega}]\\
\|f\|^{(2)}_{\alpha,B_r} &\leq
8\theta^2\|f\|^{(2)}_{\alpha,\Omega}
\end{align}
\end{lemma}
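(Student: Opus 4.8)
The plan is to exploit the fact that on a ball $B_r(x_0)$ with $r=\theta d(x_0)$ and $\theta\le 1/2$ the distance function $d(\cdot)=\mathrm{dist}(\cdot,\pa\Omega)$ oscillates by at most a factor $2$ — exactly the device used in the proof of Theorem~\ref{th:int-l}. First I would record the elementary comparison: for $P\in B_r(x_0)$ and any $Z\in\pa\Omega$,
\[
|P-Z|\ge |x_0-Z|-|P-x_0|\ge d(x_0)-\theta d(x_0)\ge \tfrac12 d(x_0),
\]
so $\tfrac12 d(x_0)\le d(P)\le \tfrac32 d(x_0)$ on $B_r(x_0)$, and hence $d_{P,Q}=\min(d(P),d(Q))\ge\tfrac12 d(x_0)$ for all $P,Q\in B_r(x_0)$. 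Equivalently, the constant weight $r$ entering the norm $\|\cdot\|^{(2)}_{\alpha,B_r}$ (which is $r^2\sup_{B_r}|\cdot|+r^{2+\alpha}[\cdot]_{\alpha,B_r}$, as in \eqref{eq:theta-est}) satisfies $r\le 2\theta\,d(P)$ and $r\le 2\theta\,d_{P,Q}$ on $B_r(x_0)$.

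For the first inequality I would split $\|\nabla^2u\|^{(2)}_{\alpha,B_r}$ into its order-zero part $r^2\sup_{B_r}|\nabla^2u|$ and its H\"older part $r^{2+\alpha}[\nabla^2u]_{\alpha,B_r}$. In the order-zero part, insert $d(P)^2$: using $r\le 2\theta d(P)$ one gets $r^2|\nabla^2u(P)|\le(2\theta)^2 d(P)^2|\nabla^2u(P)|\le 4\theta^2\|\nabla^2u\|^*_{2,\Omega}$ (the $d^2$-weighted sup of $\nabla^2u$ over $\Omega\supset B_r$). In the H\"older part, insert $d_{P,Q}^{2+\alpha}$:
\[
r^{2+\alpha}\frac{|\nabla^2u(P)-\nabla^2u(Q)|}{|P-Q|^\alpha}
\le (2\theta)^{2+\alpha}\,d_{P,Q}^{2+\alpha}\frac{|\nabla^2u(P)-\nabla^2u(Q)|}{|P-Q|^\alpha}
\le 2^{2+\alpha}\theta^{2+\alpha}[u]^*_{2+\alpha,\Omega}.
\]
Since $4\le 8$ and $2^{2+\alpha}\le 8$, adding the two contributions gives the asserted bound with constant $8$.

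The second inequality is the same computation carried out for the scalar $f$: $r^2|f(P)|\le 4\theta^2\sup_\Omega(d^2|f|)\le 4\theta^2\|f\|^{(2)}_{\alpha,\Omega}$, while $r^{2+\alpha}|f(P)-f(Q)|/|P-Q|^\alpha\le 2^{2+\alpha}\theta^{2+\alpha}[f]^{(2)}_{\alpha,\Omega}$. The reason a single power $\theta^2$ now suffices is the hypothesis $\theta\le 1/2$: it gives $2^{2+\alpha}\theta^{2+\alpha}=2^{2+\alpha}\theta^{\alpha}\cdot\theta^2\le 2^{2+\alpha}\cdot 2^{-\alpha}\cdot\theta^2=4\theta^2$, so the H\"older contribution is also $\le 4\theta^2\|f\|^{(2)}_{\alpha,\Omega}$ and the two pieces sum to $\le 8\theta^2\|f\|^{(2)}_{\alpha,\Omega}$.

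There is no genuine obstacle here: the entire content is the comparison $r\asymp d\asymp d(x_0)$ on $B_r(x_0)$ together with careful bookkeeping of the constants, and the only point one must not fumble is that the explicit factor $8$ — and, in the second estimate, the improvement from $\theta^{2+\alpha}$ to a single $\theta^2$ — rests precisely on $\theta\le 1/2$ through $\theta^{\alpha}\le 2^{-\alpha}$; relaxing that hypothesis would merely leave a harmless extra power $\theta^{\alpha}$. The same "frozen weight $r$ versus varying weight $d$" mechanism, combined with the product Lemma stated just above, is what will then control the full perturbation term $F=\sum(a^{ij}(x_0)-a^{ij})\pa_{ij}u-\sum b^i\pa_i u-cu+f$ on $B_r(x_0)$.
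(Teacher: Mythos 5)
Your proof is correct and follows essentially the same route as the paper's: both arguments come down to comparing the weights in the $B_r$-norm with the $\Omega$-weights via $d(y,\pa B_r)\le r=\theta d(x_0)$ together with $d(y,\pa\Omega)\ge(1-\theta)d(x_0)\ge\tfrac12 d(x_0)$, so that the $B_r$-weights are at most $2\theta$ times the $\Omega$-weights, and the factor $8$ (and the absorption of $\theta^\alpha$ in the second estimate via $2\theta\le1$) is the same bookkeeping. The only caveat is your parenthetical identification of $\|\cdot\|^{(2)}_{\alpha,B_r}$ with the constant-weight quantity $r^2\sup_{B_r}|\cdot|+r^{2+\alpha}[\cdot]_{\alpha,B_r}$: by the definition in Section 2.3 the weights are $d(\cdot,\pa B_r)$ and $d_{P,Q,B_r}$, which are $\le r$, so the quantity you bound dominates the norm actually appearing in the lemma and your estimate is, if anything, slightly stronger; the conclusion is unaffected.
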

\begin{proof}
We need to estimate, for $y\in B_r(x)$, $d(y,\pa B_r(x))$ and
$d_{x,y,B_r}$ in terms of the corresponding distances relative to
$\Omega$. On the one hand, $d(y,\pa B_r)\leq r-|x-y|\leq r=\theta
d(x)$. On the other hand, if $z\in B_r(x)$ and $d(y,\pa
B_r(x))\leq d(z,\pa B_r(x))$, we have $d_{y,z,B_r}\leq \theta
d(x)$ and also $d(y)\geq d(y,\pa B_r(x))\geq (1-\theta)d(x)$; it
follows that $d(x)\leq (1-\theta)^{-1}d_{x,y,\Omega}$. Therefore,
\[ d(y,\pa B_r) \leq \theta d(x)
\]
and
\[ d_{y,z,B_r}\leq \frac\theta{1-\theta}d_{y,z,\Omega}.
\]
The two desired inequalities follow.
\end{proof}
\begin{lemma}
If $x\in B_r(x_0)$ with $r=\theta d(x_0)$, with $0<\theta\leq 1/2$,
we have
\[ \|a(x)-a(x_0)\|^{(0)}_{\alpha,B_r}\leq
C\theta^\alpha[a]^*_{\alpha,\Omega}.
\]
\end{lemma}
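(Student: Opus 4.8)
The plan is to split $\|a(\cdot)-a(x_0)\|^{(0)}_{\alpha,B_r}$ into its supremum part and its weighted seminorm part, and to estimate each using the elementary comparison between the distance to $\partial B_r(x_0)$ and the distance to $\partial\Omega$ already exploited in the preceding lemma. Write $d_\Omega$ for the distance to $\partial\Omega$ (so $r=\theta d_\Omega(x_0)$) and $d_{B_r}$ for the distance to $\partial B_r(x_0)$. For $P\in B_r(x_0)$ one has $d_{B_r}(P)=r-|P-x_0|\le r=\theta d_\Omega(x_0)$ while $d_\Omega(P)\ge d_\Omega(x_0)-|P-x_0|>(1-\theta)d_\Omega(x_0)$; hence, for $P,Q\in B_r(x_0)$,
\[ d_{P,Q,B_r}\le\theta d_\Omega(x_0)\le\frac{\theta}{1-\theta}\,d_{P,Q,\Omega}\le 2\theta\,d_{P,Q,\Omega}, \]
the last step because $\theta\le 1/2$. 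This is the only geometric input.

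Since $\sigma=0$ the weight $d^\sigma$ is trivial, so I would bound separately $\sup_{B_r}|a(\cdot)-a(x_0)|$ and the weighted seminorm $[a(\cdot)-a(x_0)]^{(0)}_{\alpha,B_r}$. For the first: for $x\in B_r(x_0)$ the definition of $[a]^*_{\alpha,\Omega}$ gives $|a(x)-a(x_0)|\le [a]^*_{\alpha,\Omega}\,|x-x_0|^\alpha/d_{x,x_0,\Omega}^\alpha$, and inserting $|x-x_0|<\theta d_\Omega(x_0)$, $d_{x,x_0,\Omega}>(1-\theta)d_\Omega(x_0)$, and $\theta\le 1/2$, $\alpha<1$, yields $|a(x)-a(x_0)|\le(2\theta)^\alpha[a]^*_{\alpha,\Omega}\le 2\theta^\alpha[a]^*_{\alpha,\Omega}$. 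For the seminorm: subtracting the constant $a(x_0)$ leaves difference quotients unchanged, so it equals $\sup_{P,Q\in B_r}d_{P,Q,B_r}^\alpha\,|a(P)-a(Q)|/|P-Q|^\alpha$; writing $d_{P,Q,B_r}^\alpha=(d_{P,Q,B_r}/d_{P,Q,\Omega})^\alpha\,d_{P,Q,\Omega}^\alpha$, bounding the first factor by the displayed comparison and recognizing the second as $\le[a]^*_{\alpha,\Omega}$, one gets the same bound $2\theta^\alpha[a]^*_{\alpha,\Omega}$. Adding the two bounds gives the claim, e.g.\ with $C=4$, independently of $n$ and $\alpha$.

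The argument involves no delicate estimate; the only point deserving care is bookkeeping about which distance function enters which weighted norm — the seminorm over $B_r$ is weighted by the distance to $\partial B_r(x_0)$, whereas $[a]^*_{\alpha,\Omega}$ is weighted by the distance to $\partial\Omega$ — together with the observation that the smallness factor $\theta^\alpha$, rather than a mere constant, arises precisely because the inner ball is $\theta$ times shallower than $\Omega$. This smallness is exactly what is needed afterwards, via the product lemma $\|uv\|^{(s+t)}_{\alpha}\le\|u\|^{(s)}_{\alpha}\|v\|^{(t)}_{\alpha}$, to absorb the frozen-coefficient error term $(a^{ij}(x_0)-a^{ij}(x))\pa_{ij}u$ into the left-hand side of the main estimate.
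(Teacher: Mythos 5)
Your proof is correct and follows essentially the same route as the paper: bound the supremum part of $\|a(\cdot)-a(x_0)\|^{(0)}_{\alpha,B_r}$ directly from the definition of $[a]^*_{\alpha,\Omega}$ using $|x-x_0|\le\theta d(x_0)$ and $d_{x,x_0,\Omega}\ge(1-\theta)d(x_0)$, and bound the weighted seminorm by comparing $d_{P,Q,B_r}\le\theta d(x_0)\le\frac{\theta}{1-\theta}d_{P,Q,\Omega}$, exactly as in the paper's (more compressed) argument. The explicit constant $C=4$ and the careful bookkeeping of which distance weights which seminorm are fine.
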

\begin{proof}
If $d(x)\leq d(y)$ and $|x-y|\leq r =\theta d(x_0)$ with
$\theta\leq 1$,
\[ |a(x)-a(y)|\leq d(x)^\alpha
\frac{|a(x)-a(y)|}{|x-y|^\alpha}(\frac{|x-y|}{d(x)})^\alpha\leq
C\theta^\alpha [a]^{(0)}_\alpha
\]
since $(1-\theta)d(x_0)\leq d(x)\leq (1+\theta)d(x_0)$. Therefore,
estimating $|a(x)-a(x_0)|$ by $r^\alpha [a]^*_{\alpha,\Omega}$, we
find the announced inequality.
\end{proof}
\vskip 1em
We now resume the proof of the estimate of $[\nabla^2u]_{\alpha}$:
first,
\begin{align*}
\|(a(x)-a(x_0))\nabla^2u(x)\|^{(2)}_{\alpha,B_r}&\leq
\|a(x)-a(x_0)\|^{(0)}_{\alpha,B_r}
\|\nabla^2u\|^{(2)}_{\alpha,B_r}\\ &\leq
C\theta^{2+\alpha}\|a\|^{(0)}_{\alpha,\Omega}
(\|\nabla^2u(x)\|^{*}_{\alpha,\Omega}+\theta^\alpha[u]^*_{2+\alpha,\Omega}).
\end{align*}
Similarly,
\begin{align*}
\|b\nabla u(x)\|^{(2)}_{\alpha,B_r}
  &\leq 8\theta^2\|b\nabla u\|^{(2)}_{\alpha,\Omega} \\
  &\leq 8\theta^2\|b\|^{(1)}_{\alpha,\Omega}\|\nabla  u\|^{(1)}_{\alpha,\Omega}\\
  &\leq C\theta^2\|b\|^{(1)}_{\alpha,\Omega}\{\theta^{2\alpha}[u]^*_{2+\alpha,\Omega}+\sup |u|\}.
\end{align*}
Finally,
\begin{align*}
\|cu\|^{(2)}_{\alpha,B_r}
  &\leq 8\theta^2\|cu\|^{(2)}_{\alpha,\Omega}\leq 8\theta^2
  \|c\|^{(2)}_{\alpha,\Omega}\|u\|^{(0)}_{\alpha,\Omega}\\
  &\leq 8\theta^2\{\theta^{2\alpha}[u]^*_{2+\alpha,\Omega}+\sup |u|\}.
\end{align*}
It follows that
\[ \|F\|^{(2)}_{\alpha,B_r}\leq
C\theta^{2+2\alpha}[u]^*_{2+\alpha,\Omega}+c(\theta)(\sup|u|+\|f\|^{(2)}_{\alpha,\Omega}).
\]
Therefore, using this inequality in (\ref{eq:theta-est})
and (\ref{eq:theta-est2}), we find
\[ d(x_0)^{2+\alpha}[u]^*_{2+\alpha,\Omega}\leq C\theta^\alpha
  [u]^*_{2+\alpha,\Omega}+c'(\theta)(\sup|u|+\|f\|^{(2)}_{\alpha,\Omega}).
\]
The desired estimate on $[u]^*_{2+\alpha,\Omega}$ follows.

\subsection{Estimates up to the boundary}

The potential-theoretic argument extends easily to the case of
Poisson's equation on the half-ball for the following reason: if
we apply the formula for the second-order derivatives of the
Newtonian potential (theorem \ref{th:wij}) to the case in which
$\Omega$ is the half-ball $B_R\cap\{x_n>0\}$, we find that the
contribution to the boundary integral of the part of the boundary
on which $x_n=0$ vanishes if $j<n$, because the component $n_j$ of
the outward normal then vanishes. The subsequent argument
therefore goes through without change, and yields the H\"older
continuity up to the boundary of all second-order derivatives of
$u$ except $\pa^2_{x_n}u$; but the latter is given in terms of the
former using Poisson's equation. We therefore obtain the
$C^{2+\alpha}$ estimates up to the boundary for the Newtonian
potential of a density $f$ of class $C^{\alpha}$ in the half-ball.

To obtain regularity up to $x_n=0$ for the solution of the
Dirichlet problem on the half-ball, we use \emph{Schwarz'
reflection principle}
\begin{lemma}
Let $f$ be of class $C^\alpha$ in the closed half-ball. If $u$ is
of class $C^2$ on the open half-ball of radius $R$, is continuous
on the closed ball, satisfies $\Delta u=f$ in the half-ball, and
vanishes for $x_n=0$, it may be extended to the entire ball as a
solution of an equation of the form $\Delta u=f_1$. In particular,
$u$ is of class $C^{2+\alpha}$ on any compact subset of the closed
half-ball which does not meet the spherical part of its boundary.
\end{lemma}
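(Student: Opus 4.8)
The plan is to reduce to the case of a harmonic function vanishing on a flat piece of the boundary --- where the classical Schwarz reflection principle applies --- by subtracting from $u$ a particular solution that already vanishes on $\{x_n=0\}$. Write $B^+=B_R\cap\{x_n>0\}$, let $D=B_R\cap\{x_n=0\}$ be the open flat disk, and for $Q=(q',q_n)$ write $Q^*=(q',-q_n)$. With $g(P,Q)$ the Newtonian potential kernel used above ($\Delta_P g(\cdot,Q)=\delta_Q$), I would introduce the half-space Green function $G(P,Q)=g(P,Q)-g(P,Q^*)$ and set
\[
V(P)=\int_{B^+}G(P,Q)\,f(Q)\,dQ .
\]
This $V$ should have three properties: (i) $\Delta V=f$ in $B^+$, since $Q\in B^+$ forces $Q^*\notin B^+$, so that $\Delta_P g(P,Q^*)=0$ there; (ii) $V=0$ on $D$, since $G(P,Q)=0$ whenever $p_n=0$; and (iii) $V$ is of class $C^{2+\alpha}$ up to $D$ (on $B^+\cup D$, away from the spherical part of $\partial B^+$).

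Property (iii) is the crux, and I would obtain it by rerunning the proofs of theorem \ref{th:wij} and of the $C^{2+\alpha}$ estimate for the Newtonian potential that follows it, with $g$ replaced by $G$. Two facts make that possible. First, $G(\cdot,Q)\equiv 0$ when $q_n=0$, so all its $P$-derivatives vanish there; hence in the representation of $\partial_{ij}V$ analogous to $w_{ij}$, the boundary integral over $D$ disappears \emph{entirely} --- not merely its $j<n$ part, as for the bare half-ball potential treated just before the lemma --- leaving only the hemispherical contribution, which is smooth for $P$ bounded away from it. Second, $p_n,q_n\ge 0$ gives $|P-Q^*|\ge|P-Q|$, so $g(P,Q^*)$ and its derivatives obey the same pointwise bounds as $g(P,Q)$; thus the volume integral $\int_{B^+}\partial_{ij}G(P,Q)\,[f(Q)-f(P)]\,dQ$ and its H\"older modulus are controlled exactly as in that theorem. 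This is the step I expect to be the actual work, though it merely repeats an estimate already made.

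Then I would put $w=u-V$ on $B^+\cup D$. By (i), $\Delta w=0$ in $B^+$; by (ii), the continuity of $u$ up to $D$, and (iii), $w$ is continuous up to $D$ with $w=u-V=0$ there. The Schwarz reflection principle now applies: the odd extension $\widetilde w(x',x_n)=-w(x',-x_n)$ for $x_n<0$ is harmonic on $B_R$ --- it is continuous, harmonic off $D$, and on any ball centred on $D$ it coincides with its Poisson integral, which is odd in $x_n$ and hence vanishes on $D$, the two one-sided identities following from the maximum principle. In particular $\widetilde w\in C^\infty(B_R)$, so $w$ is smooth up to $D$.

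Finally, since $V\in C^{2+\alpha}$ up to the flat part and $D$ is flat, reflection across $D$ yields a $C^{2+\alpha}$ extension $\widehat V$ of $V$ past $D$. Set $\widetilde u=\widetilde w+\widehat V$. On $B^+\cup D$ this equals $w+V=u$, so $\widetilde u$ extends $u$; and $\Delta\widetilde u=\Delta\widehat V=:f_1$ is of class $C^\alpha$, with $f_1=f$ on $B^+$. Thus $u$ is extended to a solution of $\Delta\widetilde u=f_1$ with H\"older right-hand side, and the interior $C^{2+\alpha}$ estimate gives $\widetilde u\in C^{2+\alpha}$ on compact subsets; hence $u$ is of class $C^{2+\alpha}$ on every compact subset of $\overline{B^+}$ that avoids the spherical part of $\partial B_R$. (This last conclusion is in any case immediate from $\widetilde w\in C^\infty$ and $\widehat V\in C^{2+\alpha}$.)
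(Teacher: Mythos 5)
Your construction is in substance the paper's own: your $V$ is exactly the comparison potential $W(x)=\int_{B_R\cap\{x_n>0\}}[g(x-y)-g(x-\tilde y)]f(y)\,dy$ used in the paper, and your treatment of the remainder $w=u-V$ (odd extension, shown harmonic by comparing with the Poisson integral on balls centred on $D$ and using the maximum principle) is a perfectly acceptable substitute for the paper's Poincar\'e--Perron argument; like it, it avoids any circular appeal to Schauder theory. One small repair in the final assembly: plain reflection of $V$ across $D$ does not give a $C^{2+\alpha}$ extension (even reflection fails already at first order since $\partial_n V$ need not vanish on $D$; odd reflection fails at second order because $V=0$ on $D$ forces $\partial_{nn}V=f\neq0$ there), so $\widehat V$ must be produced by a higher-order (Hestenes-type) reflection or any standard $C^{2+\alpha}$ extension across a flat boundary portion; this is routine.

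The genuine gap is in your justification of (iii), the very step you single out as the actual work: the boundary integral over $D$ does \emph{not} ``disappear entirely.'' In the representation of theorem \ref{th:wij} the boundary term is created by converting the outer derivative $\partial_{x_j}$ into a $y$-derivative before integrating by parts, and for the reflected kernel this conversion flips sign in the normal variable: $\partial_{x_n}g(x-\tilde y)=+\partial_{y_n}g(x-\tilde y)$, whereas $\partial_{x_n}g(x-y)=-\partial_{y_n}g(x-y)$. Hence for $j=n$ --- the only index with $n_j\neq 0$ on $D$ --- the integrand of the resulting boundary term on $D$ is $\partial_i g(x-y)+\partial_i g(x-\tilde y)=2\,\partial_i g(x-y)$ there, not $\partial_i G=0$; you have conflated the flux $\int_D\partial_{P_i}G\,n_j\,ds$ (which does vanish) with the boundary term the integration by parts actually produces. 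The conclusion (iii) is nevertheless true and can be repaired in two ways: (a) use the device the paper describes just before the lemma --- represent every second derivative of $V$ except $\partial_{nn}V$ with the \emph{outer} derivative tangential ($j<n$), so that $n_j=0$ on $D$ and no sign issue arises, and recover $\partial_{nn}V$ from $\Delta V=f$; or (b) keep your representation and estimate the residual term $2f(P)\int_D\partial_i g(P-Q)\,ds(Q)$ directly: it is $f(P)$ times a layer-potential integral with constant density on the flat disk, which (for $i<n$ after a tangential integration by parts, and for $i=n$ by the Poisson-kernel computation) is smooth up to the open disk away from its rim, hence harmless. As written, however, the cancellation you invoke is false, and since the whole $C^{2+\alpha}$-up-to-$D$ claim for $V$ rests on it, this step needs to be redone along one of these lines.
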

\begin{proof}
Write $x=(x',x_n)$, and extend $f$ to an even function $f_1$ on
the ball. Using the inequality $a^\alpha+b^\alpha\leq
2(a+b)^\alpha$, we see that $f_1$ is of class $C^\alpha$. Now, the
Newtonian potential of $f_1$ does not satisfy the Dirichlet
boundary condition. We therefore consider
\[ W(x):=\int_{B_R\cap\{x_n>0\}}
[g(x-y)-g(x-\tilde y)]f(y)dy,
\]
where $\tilde y=(y',-y_n)$ is the reflection of $y$
across $\{x_n=0\}$.
It is easy to see that $\Delta W=0$ in the half-ball, and that
$W=0$ for $x_n=0$. It is also of class $C^{2+\alpha}$ by the
variant of theorem \ref{th:wij} already indicated. We now consider
$V:=u-W$, which is harmonic in the half-ball, and vanishes for
$x_n=0$. Extend $V$ to an \emph{odd} function of $x_n$ on the
entire ball. Consider the solution of the Dirichlet problem on the
ball with boundary data equal to $V$. This problem has a unique
solution $V^*$ by the Poincar\'e-Perron method---which is
independent of Schauder theory. Since $-V^*(x',-x_n)$ solves the
same problem, we find that $V^*$ must be odd with respect to
$x_n$. Therefore $V^*$ is also the solution of the Dirichlet
problem on the half-ball, with boundary value given by $V$ on the
spherical part of the boundary, and value zero on the flat part of
the boundary (where $x_n=0$). Therefore, $V^*$ must be equal to
$V$ on the half-ball, and therefore on the ball as well. This
proves that $V=V^*$ has the required regularity up to $x_n=0$, as
desired.
\end{proof}

The perturbation from constant to variable coefficients then
proceeds by a variant of the argument used for the interior
estimates \cite{GT,DN,ADN}.

\section{Fuchsian operators on $C^{2+\alpha}$
domains}

We now consider operators satisfying an asymptotic scale invariance
condition near the boundary. These operators arise naturally as
local models near singularities through the process of Fuchsian
Reduction \cite{SK-fr}. We develop the basic estimates for such
operators without condition on the sign of the lower-order terms.
A typical example of the more precise theorems one obtains under
such conditions is given in theorem \ref{th:III}. We distinguish
two types of Fuchsian operators.

An operator $A$ is said to be \emph{of type (I)} (on a given
domain $\Omega$) if it can be written
\[ A=\pa_i(d^2a^{ij}\pa_{j})+db^i\pa_i+c, \]
with $(a^{ij})$ uniformly elliptic and of class $C^\alpha$, and
$b^i$, $c$ bounded.
\begin{remark}
One can also allow terms of the type $\pa_i(b^{\prime i}u)$ in
$Au$, if $b^{\prime i}$ is of class $C^\alpha$, but this
refinement will not be needed here.
\end{remark}

An operator is said to be \emph{of type (II)} if it can be written
\[ A= d^2a^{ij}\pa_{ij}+db^i\pa_i+c, \]
with $(a^{ij})$ uniformly elliptic and $a^{ij}$, $b^i$, $c$ of
class $C^\alpha$.
\begin{remark}
One checks directly that types (I) and (II) are invariant under
changes of coordinates of class $C^{2+\alpha}$. In particular, to
check that an operator is of type (I) or (II), we may work
indifferently in coordinates $x$ or $(T,Y)$ defined in section
\ref{sec:prel}. All proofs will be performed in the $(T,Y)$
coordinates; an operator is of type (II) precisely if it has the
above form with $d$ replaced by $T$, and the coefficients
$a^{ij}$, $b^i$, $c$ are of class $C^\alpha$ as functions of $T$
and $Y$; a similar statement holds for type (I).
\end{remark}

The basic results for type (I) operators are
\btl{FIa} If $Ag=f$, where $f$ et $g$ are
bounded and $A$ is of type (I) on $\Omega'$, then $d\nabla g$ is
bounded, and $dg$ and $d^2\nabla g$ belong to
$C^\alpha(\Omega'\cup\pa\Omega)$.
\etl
\btl{FIb} If $Ag=df$, where
$f$ and $g$ are bounded, $g=O(d^\alpha)$, and $A$ is of type (I)
on $\Omega'$, then $g\in C^{\alpha}(\Omega'\cup\pa\Omega)$ and
$dg\in C^{1+\alpha}(\Omega'\cup\pa\Omega)$
\etl
These two results are proved in the next subsection. The main
result for type (II) operators is:
\btl{FIIa} If $Ag=df$, where
$f\in C^\alpha(\Omega'\cup\pa\Omega)$, $g=O(d^\alpha)$, and $A$ is
of type (II) on $\Omega'$, then $d^2 g$ belongs to
$C^{2+\alpha}(\Omega'\cup\pa\Omega)$. \etl
\begin{proof}
The assumptions ensure that $a^{ij}\pa_{ij}(d^2f)$ is
H\"older-continuous and that $f$ is bounded; $d^2f$ therefore
solves a Dirichlet problem to which the Schauder estimates apply
near $\pa\Omega$. Therefore $d^2f$ is of class $C^{2+\alpha}$ up
to the boundary. Since we already know that $f\in
C^\alpha(\overline\Omega_\delta)$ and $df$ is of class
$C^{1+\alpha}(\overline\Omega_\delta)$, we have indeed $f$ of
class $C_\sharp^{2+\alpha}(\overline\Omega_{\delta'})$ for
$\delta'<\delta$.
\end{proof}

Let $\rho>0$ and $t\leq 1/2$. Throughout the proofs, we shall use
the sets
\begin{align*}
Q   &=\{ (T,Y) : 0\leq T\leq 2 \text{ and } |y|\leq 3\rho\},\\ Q_1
&=\{ (T,Y) : \frac14\leq T\leq2 \text{ and } |y|\leq 2\rho\},\\
Q_2 &=\{ (T,Y) : \frac12\leq T\leq 1 \text{ and }
|y|\leq\rho/2\},\\ Q_3 &=\{ (T,Y) : 0\leq T\leq \frac12 \text{ and
} |y|\leq\rho/2\}.
\end{align*}
We may assume, by scaling coordinates, that $Q\subset\Omega'$. It
suffices to prove the announced regularity on $Q_3$.


\subsection{First ``type (I)'' result}
\label{sec:e7}

We prove theorem \ref{th:FIa}.

Let $Af=g$, with $A$, $f$, $g$ satisfying the assumptions of the
theorem over $Q$, and let $y_0$ be such that $|y_0|\leq \rho$.

For $0<\eps\leq 1$, and $(T,Y)\in Q_1$, let
\[ f_\eps(T,Y)=f(\eps T,y_0+\eps Y),
\]
and similarly for $g$ and other functions. We have
$f_\eps=(Ag)_\eps=A_\eps f_\eps$, where
\[
A_\eps=\pa_i(T^2a^{ij}_\eps\pa_{j})+Tb^i_\eps\pa_i+c_\eps
\]
is also of type (I), with coefficient norms independent of $\eps$
and $y_0$, and is uniformly elliptic in $Q_1$.

Interior estimates give
\begin{equation}\label{eq:c-1-alpha}
\|g_\eps\|_{C^{1+\alpha}(Q_2)}\leq
M_1:=C_1(\|f_\eps\|_{L^{\infty}(Q_1)}+
\|g_\eps\|_{L^{\infty}(Q_1)}).
\end{equation}
The assumptions of the theorem imply that $M_1$ is independent of
$\eps$ and $y_0$.

We therefore find,
\begin{align}
|\eps\nabla g(\eps T, y_0+\eps Y)| &\leq M_1,\\ \eps|\nabla g(\eps
T, y_0+\eps Y)-\nabla g(\eps T', y_0)| &\leq M_1
   (|T-T'|+|Y|)^\alpha\,
\end{align}
if $\frac12\leq T, T'\leq 1$ and $|Y|\leq\rho/2$. It follows in
particular, taking $Y=0$, $\eps=t\leq 1$, $T=1$, and recalling
that $|y_0|\leq\rho$, that
\begin{equation}
  |t\nabla g(t,y)|\leq M_1\text{ if } |y|\leq \rho, t\leq 1.
\end{equation}
This proves the first statement in the theorem.

Taking $\eps=2t\leq 1$, $T=1/2$, and letting $y=y_0+\eps Y$,
$t'=\eps T'$,
\[
  2t|\nabla g(t,y)-\nabla g(t',y_0)|\leq
      M_1(|t-t'|+|y-y_0|)^\alpha (2t)^{-\alpha}
\]
for $|y-y_0|\leq \rho t$ and $t\leq t'\leq 2t\leq 1$.

Let us prove that
\begin{equation}\label{eq:est}
 |t^2\nabla g(t,y)-t^{\prime 2}\nabla g(t',y_0)|\leq
      M_2(|t-t'|+|y-y_0|)^\alpha
\end{equation}
for $|y|, |y_0|\leq \rho$, and $0\leq t\leq t'\leq \frac12$, which
will prove
\[  t^2\nabla g\in C^\alpha(Q_3).
\]
It suffices to prove this estimate in the two cases: (i) $t=t'$
and (ii) $y=y_0$; the result then follows from the triangle
inequality. We distinguish three cases.
\begin{enumerate}
  \item If $t=t'$, we need only consider the case $|y-y_0|\geq\rho
  t$. We then find
  \[ t^2|\nabla g(t,y)-\nabla g(t,y_0)|\leq
      2M_1t\leq 2M_1|y-y_0|/\rho.
  \]
  \item If $y=y_0$ and $t\leq t'\leq 2t\leq 1$, we have $t+t'\leq
  2t'$, hence
  \begin{align*}
      |t^2\nabla g(t,y_0)&-t^{\prime 2}\nabla g(t',y_0)|\\
         &\leq t^2|\nabla g(t,y_0)-\nabla g(t',y_0)|
            +|t-t'|(t+t')|\nabla g(t',y_0)|    \\
         &\leq M_12^{-1-\alpha}t^{1-\alpha}|t-t'|^\alpha
                +2M_1|t-t'|                    \\
         &\leq M_2|t-t'|^\alpha.
  \end{align*}
  \item If $y=y_0$, and $2t\leq t'\leq 1/2$, we have $t+t'\leq
  3(t'-t)$, and
  \begin{align*}
      |t^2\nabla g(t,y_0)-t^{\prime 2}\nabla g(t',y_0)|
         &\leq  M_1(t+t') \\
         &\leq 3M_1 |t-t'|.
  \end{align*}
\end{enumerate}
This proves estimate (\ref{eq:est}).

On the other hand, since $g$ and $T\nabla g$ are bounded over
$Q_3$,
\[  Tg\in \mathrm{Lip}(Q_3)\subset C^\alpha(Q_3).
\]
This completes the proof of theorem \ref{th:FIa}.


\subsection{Second ``type (I)'' result}
\label{sec:e8}

We prove theorem \ref{th:FIb}.

The argument is similar, except that $M_1$ is now replaced by
$M_3\eps^\alpha$, with $M_3$ independent of $\eps$ and $y_0$. It
follows that
\begin{equation}
  |t\nabla g(t,y)|\leq M_3t^\alpha\text{ if } |y|\leq \rho, t\leq 1.
\end{equation}
Taking $\eps=2t\leq 1$, $T=1/2$, and letting $y=y_0+\eps Y$,
$t'=\eps T'$, and noting that
$\eps^\alpha(|T-T'|+|Y|)^\alpha=(|t-t'|+|y-y_0|)^\alpha$, we find
\[
  2t|\nabla g(t,y)-\nabla g(t',y_0)|\leq
      M_3 (|t-t'|+|y-y_0|)^\alpha
\]
for $|y-y_0|\leq \rho t$ and $t\leq t'\leq 2t\leq 1$. Let us prove
that
\begin{equation}\label{eq:est2}
 |t\nabla g(t,y)-t'\nabla g(t',y_0)|\leq
      M_4(|t-t'|+|y-y_0|)^\alpha
\end{equation}
for $|y|, |y_0|\leq \rho$, and $0\leq t\leq t'\leq \frac12$, which
will prove
\[  T\nabla g\in C^\alpha(Q_3).
\]
We again distinguish three cases.
\begin{enumerate}
  \item If $t=t'$, $|y-y_0|\geq\rho t$, we find
  \[ t|\nabla g(t,y)-\nabla g(t,y_0)|\leq
      2M_3t^\alpha\leq 2M_3(|y-y_0|/\rho)^\alpha.
  \]
  \item If $y=y_0$ and $t\leq t'\leq 2t\leq 1$, we have $|t-t'|\leq
  t\leq t'$, hence
  \begin{align*}
      |t\nabla g(t,y_0)-t'\nabla g(t',y_0)|
         &\leq \frac12 M_3|t-t'|^\alpha
                +|t-t'||\nabla g(t',y_0)|             \\
         &\leq M_3|t-t'|^\alpha(\frac12+t'^{1-\alpha}t'^{\alpha-1})
               \leq 2M_3|t-t'|^\alpha.
  \end{align*}
  \item If $y=y_0$, and $2t\leq t'\leq 1/2$, we have $t\leq t'\leq
  3(t'-t)$, and
  \begin{align*}
      |t\nabla g(t,y_0)-t'\nabla g(t',y_0)|
         &\leq  M_3(t^\alpha+t^{\prime \alpha}) \\
         &\leq 2M_3 (3|t-t'|)^\alpha.
  \end{align*}
\end{enumerate}
Estimate (\ref{eq:est2}) therefore holds.

The same type of argument shows that
\[  g\in C^\alpha(Q_3). \]
In fact, we have, with again $\eps=2t$,
$\|g_\eps\|_{C^\alpha(Q_2)}\leq M_5\eps^\alpha$, where $M_5$
depends on the r.h.s.\ and the uniform bound assumed on $f$. This
implies
\[ |g(t,y)-g(t',y_0)|\leq M_5 (|t-t'|+|y-y_0|)^\alpha,
\]
if  $t\leq t' \leq 2t\leq 1\text{ and } |y-y_0|\leq\rho t$. The
assumptions of the theorem yield in particular
\[ |g(t,y)|\leq M_5 t^\alpha,
\]
for $t\leq 1/2$ and $|y|\leq \rho$.

If $\rho t\leq |y-y_0|\leq \rho$, and $t\leq 1/2$, we have
\[ |g(t,y)-g(t,y_0)|\leq 2M_5t^\alpha
     \leq 2M_5\left(\frac{|y-y_0|}{\rho}\right)^\alpha.
\]

If $2t\leq t'\leq 1/2$ and $y=y_0$,
\[ |g(t,y_0)-g(t',y_0)|
  \leq M_5(t^\alpha+t'^\alpha)
  \leq 2 M_5(3|t-t'|)^\alpha.
\]

If $t\leq t'\leq 2t\leq 1/2$, we already have
\[ |g(t,y_0)-g(t',y_0)| \leq M_5 |t-t'|^\alpha.
\]
The H\"older continuity of $g$ follows.

Combining these pieces of information, we conclude that
\[  g\in C^{1+\alpha}_\#(Q_3), \]
QED.

\section{Applications}
\subsection{Method of continuity}

The principle of the method of continuity consists in solving a
problem (P) by embedding it into a one-parameter family (P$_t$) of
problems, such that  (P$_0$) admits a unique solution, and (P$_1$)
coincides with problem (P). One then proves that the set of
parameter values for which (P$_t$) admits a unique solution is
both open and closed in $[0,1]$. The openness usually follows from
the implicit function theorem in H\"older spaces, and the
closedness from Ascoli's theorem; thus, both steps are made
possible by Schauder estimates.

We give an example in which a simplified procedure based on the
contraction mapping principle suffices. \btl{2} Let $L$ be an
elliptic operator with $C^\alpha$ coefficients and $c\leq 0$, in a
bounded domain $\Omega$ of class $C^{2+\alpha}$. Then, for any
$g\in C^{2+\alpha}(\overline\Omega)$, $Lu=f$ admits a solution in
$C^{2+\alpha}(\overline\Omega)$ which is equal to $g$ on
$\pa\Omega$. \etl
\begin{proof}
Considering $u-g$, we may restrict our attention to the case
$g=0$. We let $L_tu=tLu+(1-t)\Delta u$ and consider the problem
(P$_t$) which consists in solving $L_tu=f$ with Dirichlet
conditions. $L_t$ is a bounded operator from
$C^{2+\alpha}(\overline\Omega)\cup\{u=0\text{ on }\pa\Omega\}$ to
$C^\alpha(\overline\Omega)$. We know that $L_0$ is invertible, and
we wish to invert $L_1$. By the maximum principle, the assumption
$c\leq 0$ ensures that any solution of  (P$_t$) satisfies
$\sup_x|u(x)|\leq C\sup_x |f(x)|$, with a constant $C$ independent
of $t$. Therefore, if $T$ is the set of $t$ such that $L_t$ is
invertible, the Schauder estimates show that $L_t^{-1}$ is
bounded, and that its norm admits a bound $m$ independent of $t$.
This fact makes the rest of the proof simpler: let $t\in T$; for any
$s$, the equation $L_tu=f$ is equivalent to $u=L_t^{-1}f+M(t,s)u$
where
\[ M(s,t)u=(s-t)L_t^{-1}(L_0-L_1)u.
\]
If $|t-s|<\delta:=[m(\|L_0\|+\|L_1\|)]^{-1}$, $M(t,s)$ is a
contraction, and (P$_s$) is uniquely solvable. Covering $[0,1]$ by
a finite number of open intervals of length $\delta$, we find that
$L_t$ is invertible for every $t$. The result follows.
\end{proof}

For a typical example of the application of the method of
continuity, see \cite[th.~7.14]{Aubin}.

\subsection{Basic fixed-point theorems for compact operators}

We prove several versions of the Schauder fixed-point theorem. The
first ingredient in the proofs is the Brouwer fixed-point theorem:
\btl{brouwer} A continuous mapping $g : B\longrightarrow B$, where
$B$ is the closed unit ball in $\RR^n$, has at least one fixed
point. \etl
\begin{proof}
We begin with the case of smooth $g$. Assume that $g$ has no fixed
point. Let $\tilde x=x+a(x-g(x))$, where $a$ is the largest root
of the (quadratic) equation $|\tilde x|^2=1$. The point $\tilde x$
is on the intersection of the segment $[x,g(x)]$ with the unit
sphere, and is chosen so that $x$ lies between $\tilde x$ and
$g(x)$. The map from $B$ to its boundary defined by $x\mapsto
\tilde x$ is well-defined and smooth; in fact,
\[0=|\tilde x|^2-1=|x-g(x)|^2a^2+2(x,x-g(x))a+|x|^2-1,
\]
where $(\;,\;)$ denotes the usual dot product. The discriminant of
this quadratic is $4[(x,x-g(x))^2+(1-|x|^2)|x-g(x)|^2]$, which is
nonnegative, and vanishes only if $|x|=1$ and $(x,g(x))=1$. Since
$g(x)$ has norm one at most, the Cauchy-Schwarz inequality implies
that $g(x)=x$, which contradicts the hypothesis. Therefore, our
quadratic equation has two distinct real roots, obviously smooth.

For $|x|=1$, we find that $a=0$, since $(x,x-g(x))\geq 0$.

Define $f : \RR\times B\longrightarrow \RR^n$ by
\[ f(t,x_1,\dots,x_n)=x+ta(x)(x-g(x)).
\]
We find by inspection that (i) if $|x|=1$, $f(t,x)=x$ and
$\pa_tf(t,x)=0$; (ii) $f(0,x)=x$ for every $x$ in $B$; (iii)
$|f(1,x)|=1$ for every $x$ in $B$ (by construction of $a$).

Write $x_0$ for $t$, and define the determinants
\[ D_i=\det (f_{x_0},\dots,\hat f_{x_i},\dots,f_{x_n}),
\]
where $i$ runs from $0$ to $n$; a hat indicates that the
corresponding vector is omitted, and the subscripts denote
derivatives. Define further
\[ I(t)=\int_B D_0(t,x)dx.
\]
We have $I(0)=1$ since $f(0,x)=x$. For $t=1$, since $f$ lies on
the boundary of the unit sphere, $f_{x_1}$,\dots, $f_{x_n}$ are
all tangent to the sphere, and are linearly dependent; therefore,
$I(1)=0$.

We prove that $I(t)$ is constant, which will generate a
contradiction to the hypothesis that $g$ has no fixed point. We
need the
\begin{lemma}
$\sum_{i=0}^n(-1)^i\pa_{x_i}D_i=0$.
\end{lemma}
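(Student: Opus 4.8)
The plan is to differentiate each $D_i$ by the multilinearity of the determinant, and then to group the resulting terms into pairs that cancel because mixed second-order partial derivatives are symmetric. The only genuine content is a sign count, which is cleanest in the language of differential forms.

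Concretely, regard the columns of the Jacobian of $f$ as vectors $f_{x_0},\dots,f_{x_n}$ in $\RR^n$, so that $D_i=\det(f_{x_0},\dots,\widehat{f_{x_i}},\dots,f_{x_n})$ is the determinant of the $n\times n$ matrix obtained by suppressing the $i$-th column. Since the determinant is linear in each of its columns,
\[
\partial_{x_i}D_i=\sum_{j\ne i}\det\bigl(\text{$D_i$ with its column $f_{x_j}$ replaced by }f_{x_ix_j}\bigr),
\]
the remaining columns being unchanged. In the sum $\sum_{i=0}^n(-1)^i\partial_{x_i}D_i$, a given second-derivative vector $f_{x_ix_j}=f_{x_jx_i}$ (for $i\ne j$) occurs in exactly two of these determinants: one arising in $(-1)^i\partial_{x_i}D_i$, where it sits in the slot previously occupied by $f_{x_j}$, and one arising in $(-1)^j\partial_{x_j}D_j$, where it sits in the slot previously occupied by $f_{x_i}$. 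Counting the number of column transpositions needed to bring $f_{x_ix_j}$ into a common position, and combining this with the weights $(-1)^i$ and $(-1)^j$, one checks that these two contributions are exactly opposite; summing over all unordered pairs $\{i,j\}$ then yields the claimed identity.

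I expect the bookkeeping of signs in that last step to be the only point requiring care, and it is most transparent if one notes that
\[
f^{*}\bigl(dy_1\wedge\cdots\wedge dy_n\bigr)=\sum_{i=0}^n D_i\,dx_0\wedge\cdots\wedge\widehat{dx_i}\wedge\cdots\wedge dx_n ,
\]
which follows by expanding $f^{*}dy_k=\sum_j\partial_{x_j}f_k\,dx_j$ and collecting terms. Since $dy_1\wedge\cdots\wedge dy_n$ is a top-degree form on $\RR^n$ it is closed, hence so is its pull-back; computing the exterior derivative of the right-hand side gives
\[
0=d\,f^{*}\bigl(dy_1\wedge\cdots\wedge dy_n\bigr)=\Bigl(\sum_{i=0}^n(-1)^i\partial_{x_i}D_i\Bigr)\,dx_0\wedge\cdots\wedge dx_n ,
\]
so the coefficient on the right vanishes, which is exactly the lemma. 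Everything else — the product-rule expansion and the matching of columns — is routine.
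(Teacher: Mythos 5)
Your proof is correct, and its decisive step is genuinely different from the paper's. Your opening paragraph is essentially the paper's own argument: the paper differentiates each $D_i$ column by column, rewrites the terms as signed copies of the symmetric determinants $C_{ij}=\det(f_{x_ix_j},f_{x_0},\dots,\hat f_{x_i},\dots,\hat f_{x_j},\dots,f_{x_n})$, and concludes because $(-1)^{i+j}C_{ij}$ is symmetric in $(i,j)$ while the accompanying sign $\sigma_{ij}$ is antisymmetric --- precisely the bookkeeping you defer with ``one checks''. Where you diverge is in how you settle that sign count: you identify $\sum_i D_i\,dx_0\wedge\cdots\wedge\widehat{dx_i}\wedge\cdots\wedge dx_n$ with $f^*(dy_1\wedge\cdots\wedge dy_n)$ and use that any $n$-form on $\RR^n$ is closed together with $d\,f^*\omega=f^*d\omega$, so the coefficient $\sum_i(-1)^i\pa_{x_i}D_i$ of $dx_0\wedge\cdots\wedge dx_n$ must vanish. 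This is a complete and correct proof; the only hypothesis it needs is that $f$ be $C^2$, which holds at this stage since $g$, hence $a$ and $f$, are smooth. As for what each route buys: the paper's computation is entirely elementary (multilinearity plus equality of mixed partials), in keeping with its deliberately self-contained proof of Brouwer's theorem, which even avoids Stokes' theorem in the following step; your argument is shorter and makes the cancellation conceptually transparent --- the alternating minors $(-1)^iD_i$ are the components of a closed pulled-back volume form, a ``null Lagrangian'' structure --- and, being a pointwise identity, it still uses no integration, so it is compatible with the paper's program; its cost is that it presupposes exterior calculus, and the naturality identity $d\circ f^*=f^*\circ d$ is, when proved from scratch, the same mixed-partials bookkeeping in disguise.
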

\begin{proof}
We have, for every $i$,
\[ \pa_{x_i}D_i=\sum_{j<i}(-1)^jC_{ij}+\sum_{j>i}(-1)^{j-1}C_{ij},
\]
where
\[ C_{ij}=\det (f_{x_ix_j},f_{x_0},\dots,\hat f_{x_i},\dots,\hat
f_{x_j},\dots,f_{x_n})=C_{ji}.
\]
Therefore
$\sum_{i=0}^n(-1)^i\pa_{x_i}D_i=\sum_{i,j=0}^n(-1)^{i+j}C_{ij}\sigma_{ij}$,
where $\sigma_{ij}=1$ for $j<i$, $-1$ for $j>i$, and zero for
$i=j$. Since $(-1)^{i+j}C_{ij}$ is symmetric in $i$ and $j$, and
$\sigma_{ij}$ is antisymmetric, the result follows.
\end{proof}
Now, for $i>0$, $D_i$ vanishes on the boundary of $B$ because
$\pa_t f=0$ there. If $n_i$ is the $i$-th component of the outward
normal to $B$, we find
\[ \int_B  \pa_{x_i}D_idx=\int_{\pa B}n_iD_ids=0.
\]
(This may be proved without using Stokes' theorem, by integrating
with respect to the $x_i$ variable keeping the others fixed.)
Using the lemma, we find
\[ \frac{dI(t)}{dt}=\int_B\pa_t D_0dx=\sum_{i>0}\pm
\pa_{x_i}D_idx=0.
\]
This completes the proof in the smooth case.

Finally, we extend the result to the case of continuous $g$. By
the Stone-Weierstrass theorem, there is a sequence of polynomial
(vector-valued) mappings $p_n$ such that $|g-p_n|\leq \ep_n\to 0$
uniformly over $B$. Since $p_n/(1+\ep_n)$ maps $B$ to itself,
there is a $y_n$ such that $p_n(y_n)=(1+\ep_n)y_n$. Extracting a
subsequence, we may assume $y_n$ has a limit $y$. It follows that
$g(y)=y$, QED.
\end{proof}
\vskip 1em
The Brouwer fixed-point theorem may be extended as follows:
\bt Let $K$ be the closed convex hull of a set of $N$ vectors
$x_1,\dots,x_N$ in $n$-dimensional space. A continuous map from
$K$ to itself has a fixed point. \etl
\begin{proof}
Let $\bar x=\frac 1N\sum_k x_k$. Decreasing $n$ if necessary, and
re-labeling the $x_k$, we may assume that the $(x_k-\bar x)_{k\leq
n}$ generate $\RR^n$. We prove that $K$ is homeomorphic to the
unit ball, so that the result follows from the Brouwer fixed point
theorem. First, $\bar x$ is interior to $K$, because, $\bar
x+\sum_{k\leq n}\ep_k(x_k-\bar x)$ is a convex combination of the
$x_k$ if the $\ep_k$ are small enough. Let $\ep$ be such that
$B_\ep(\bar x)\subset \mathop{\rm int}K$. Let, for any unit vector
$y$, $s(y)=\sup\{s : \bar x+sy\in L\}$. It is well-defined, and
bounded; also, $s(y)\geq\ep$. We need the following lemma.
\begin{lemma} $s(y)$ is continuous.
\end{lemma}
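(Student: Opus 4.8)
The plan is to show that the support function $s(y)$ of the convex body $K$ (taken from the interior point $\bar x$) is continuous on the unit sphere, and in fact Lipschitz; the homeomorphism with the ball then follows from the standard radial map $y\mapsto \bar x + s(y/|y|)\,y$ on the closed unit ball. I would first record the two basic facts already available: $K$ is compact (closed convex hull of finitely many points) and $B_\ep(\bar x)\subset\operatorname{int}K$, so that $\ep\le s(y)\le R$ for all unit $y$, where $R=\sup_{z\in K}|z-\bar x|$. Also, by convexity and closedness, $\bar x + s\,y\in K$ for every $s\in[0,s(y)]$, and $\bar x + s\,y\notin K$ for $s>s(y)$; in particular the ray hits $\pa K$ exactly at parameter $s(y)$.

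Next I would prove the quantitative continuity. Fix two unit vectors $y,y'$ and set $P=\bar x+s(y)y\in K$. The point $P$ lies on the ray from $\bar x$ in direction $y'$ only approximately, so I would instead consider the point $P'$ on the ray $\bar x + \RR_{\ge0}y'$ that is closest to $P$, or more simply argue as follows: since $B_\ep(\bar x)\subset K$ and $P\in K$, convexity gives that the ``ice-cream cone'' hull of $B_\ep(\bar x)$ and $P$ is contained in $K$. For $y'$ close to $y$, the segment from $\bar x$ in direction $y'$ of length slightly less than $s(y)$ stays inside this cone, hence inside $K$; a little trigonometry (the half-angle of the cone at $\bar x$ has sine $\ep/|P-\bar x|\ge \ep/R$) shows that one may travel a distance at least $s(y)\bigl(1 - C|y-y'|\bigr)$ in direction $y'$ and remain in $K$, where $C$ depends only on $R/\ep$. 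This yields $s(y')\ge s(y) - C'|y-y'|$; interchanging $y$ and $y'$ gives the reverse inequality, so $|s(y)-s(y')|\le C'|y-y'|$ and $s$ is (Lipschitz, hence) continuous.

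Once continuity of $s$ is in hand, the map $\Psi: \overline{B}\to K$ defined by $\Psi(0)=\bar x$ and $\Psi(v)= \bar x + s(v/|v|)\,v$ for $v\ne 0$ is continuous (continuity at $0$ follows from $|\Psi(v)-\bar x|\le R|v|$), is a bijection onto $K$ (injectivity and surjectivity follow from the fact that each ray meets $\pa K$ exactly once, at parameter $s(y)$, and fills $[\,\bar x, \bar x+s(y)y\,]$), and its inverse is continuous because $\overline B$ is compact and $K$ is Hausdorff. Hence $K$ is homeomorphic to $\overline B$, a continuous self-map of $K$ transports to a continuous self-map of $\overline B$, and Theorem~\ref{th:brouwer} applies.

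The main obstacle is the geometric estimate in the second paragraph: one must turn ``$P\in K$ and a small ball around $\bar x$ is in $K$'' into a uniform lower bound on how far one can move in a nearby direction and stay in $K$. The clean way is the cone argument with the explicit aperture $\sin^{-1}(\ep/R)$; the only care needed is that the perturbed direction $y'$ makes an angle less than this aperture with $y$, which is automatic once $|y-y'|$ is small relative to $\ep/R$, and for $|y-y'|$ not small the Lipschitz bound is trivial since $|s(y)-s(y')|\le R \le (R/2)|y-y'|\cdot(2/|y-y'|)$ can be absorbed into the constant. Everything else is routine point-set topology.
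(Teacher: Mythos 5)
Your argument is correct, and its key geometric ingredient is the same as the paper's: since $K$ is convex and contains both $B_\ep(\bar x)$ and the point $P=\bar x+s(y)y$ (closedness of $K$ gives attainment of the supremum), $K$ contains the convex hull of the ball and $P$, i.e.\ the balls of radius $(1-t)\ep$ about $\bar x+ts(y)y$; this is exactly the set $(1-t)B_\ep(\bar x)+t(\bar x +s(y)y)$ the paper uses. The difference is in how this is exploited. The paper argues sequentially and splits continuity into two halves: upper semicontinuity from closedness of $K$ (a limit of points $\bar x+s(y_m)y_m\in K$ lies in $K$), and lower semicontinuity from the neighborhood just described. You instead make the cone containment quantitative: taking $t=1-(R/\ep)|y-y'|$ shows $\bar x+ts(y)y'\in K$, hence $s(y')\ge s(y)-(R^2/\ep)|y-y'|$, and by symmetry $|s(y)-s(y')|\le C|y-y'|$, so $s$ is Lipschitz with an explicit constant depending only on $\ep$ and $R=\sup_{z\in K}|z-\bar x|$; the large-$|y-y'|$ case is absorbed trivially since $\ep\le s\le R$. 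This buys a stronger (Lipschitz) conclusion and a one-shot proof of both semicontinuities, at the cost of a small computation; the paper's version is softer and shorter, and the extra strength is not needed for the homeomorphism with the ball, where plain continuity of $s$ and $1/s$ suffices. One terminological slip: what you call the ``support function'' is really the radial (gauge-type) function of $K$ relative to $\bar x$, not the support function $\sup_{z\in K}\langle z,y\rangle$; your estimates are all about the former, consistently with the paper's definition, so nothing in the argument is affected. The remainder of your write-up (the radial homeomorphism $\Psi$, injectivity, surjectivity, continuity of the inverse via compactness) goes beyond the lemma but matches the surrounding theorem's proof.
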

\begin{proof}
If $y_m\to y$ and $s(y_m)\to s$ as $m\to\infty$, with $\bar
x+s(y_m)y_m\in K$ for all $m$, we find $\bar x+ sy\in K$, hence
$s\leq s(y)$. If $s'<s(y)$, define $t=s'/s(y)\in[0,1]$, $(1-t)
B_\ep(\bar x)+ts(y)y$ is included in $K$ (which is convex), and is
a neighborhood of $\bar x+s'y$. This implies that  $\bar
x+s'y_m\in K$ for $m$ sufficiently large; it follows that
$s(y_m)\geq s'$ for $m$ large. Therefore, $s\geq s(y)$.
\end{proof}
\vskip 1em
We now construct the required homeomorphism from $B$ to $K$ by letting
$x\mapsto xs(x/|x|)$, which inverse $x\mapsto x/s(x/|x|)$. We just
proved that these maps are continuous at all points other than 0;
the continuity at the origin follows from the fact that $s$ and
$1/s$ are bounded.
\end{proof}
\vskip 1em
We now turn to fixed-point theorems in infinite dimensions.
\btl{fp1} If $K$ is a compact convex subset of a Banach space $E$,
and $T : K\longrightarrow K$ is continuity, then $T$ admits a
fixed point. \etl
\begin{proof}
For any integer $p$, there is an integer $N=N(p)$ and points
$x_1,\dots,x_N$ in $K$ such that $K\subset B(x_1,1/p)\cap\dots\cap
B(x_N,1/p)$. Let $B_k=B(x_k,1/p)$. Consider the closed convex hull
$K_p$ of $x_1,\dots,x_N$ which is a convex set which lies in some
finite-dimensional subspace of $E$; it is a subset of $K$. The map
\[ F_p : x\mapsto
\frac{\sum_k x_kd(x,K\setminus B_k)}{\sum_k d(x,K\setminus B_k)}
\]
is well-defined and continuous (the denominator does not vanish
because the $B_k$ cover $K$). Since any term on the numerator
contributes to the sum only if $|x-x_k|\leq 1/p$, we have
$\|F_p(x)-x\|_E\leq 1/p$.

The map $F_p\circ T$ therefore admits a fixed point $y_p$:
$F_p(T(y_p))=y_p$. We may extract a subsequence $y_{p'}$ which
tends to $y\in K$. We have $T(y_{p'})\to T(y)$, and
$\|F_{p'}(T(y_{p'}))-T(y_{p'})\|_E\to 0$. It follows that
$T(y)=y$.
\end{proof}
\btl{fp2} If $K$ is a closed convex subset of a Banach space $E$,
and $T : K\longrightarrow K$ is continuous, then, if $T(K)$ has
compact closure, then $T$ admits a fixed point. \etl
\begin{proof}
One approach would consist in working in the closure of the convex
hull of $T(K)$; this requires first proving that this set is
compact. A more direct argument is to apply the same method of
proof as in the previous theorem, with the difference that $K$ is
replaced by the closure of $T(K)$ in the definition of $F_p$. The
map $F_p\circ T$ is continuous on the closed convex hull of
$x_1,\dots, x_N$, and therefore has a fixed point $y_p$ as before.
We may extract a subsequence $y_{p'}$ such that $Ty_{p'}$ tends to
some $z$ in the closure of $T(K)$. Since
$\|F_{p'}(T(y_{p'}))-T(y_{p'})\|_E\to 0$, $y_{p'}$ also tends to
$z$. It follows that $Tz=z$.
\end{proof}
\vskip 1em
A useful variant is the following: \btl{fp3} Let $F$ be a
continuous mapping from the closed unit ball in a Banach space
$E$, with values in $E$ and with precompact image. If $\|x\|_E=1$
implies $\|T(x)\|_E<1$, then $T$ has a fixed point. \etl
\begin{proof}
It suffices to consider the mapping
\[S : x\mapsto T(x)/\max(1,\|T(x)\|_E),
\]
which is continuous with precompact image from the unit ball to
itself. It therefore possesses a fixed point $y$. If
$\|T(y)\|_E\geq 1$, we find that $y=T(y)/\|T(y)\|_E$ has norm 1;
the assumption now yields $\|T(y)\|_E<1$ : contradiction.
Therefore $\|T(y)\|_E<1$ and $T(y)=y$, QED.
\end{proof}
\vskip 1em
The next theorem asserts the existence of a fixed point
as soon as we have an \emph{a priori} bound.
Let $E$ denote a Banach space. Recall that a compact operator
is an operator which maps bounded sets to relatively compact sets.
\btl{schaeffer} Let $S : E\longrightarrow E$ be compact, and
assume that there is a $r>0$ such that if $u$ solves $u=\sigma
S(u)$ for some $\sigma\in [0,1]$, $\|u\|_E<r$. Then $S$
admits a fixed point in the ball of radius $r$ in $E$. \etl
\begin{proof}
Let $T(u)=S(u)$ if $\|S(u)\|_E\leq r$ and $T(u)=r S(u)/\|S(u)\|_E$
otherwise. Then the previous theorem applies and yields a fixed
point $u$ for $T$. If $\|S(u)\|_E\geq r$, $\|T(u)\|_E=r$ and
$u=T(u)=\sigma S(u)$, with $\sigma=r/\|S(u)\|_E\in[0,1]$.
Therefore, $\|u\|_E<r$. Since $u=T(u)$, we find $\|T(u)\|_E<r$,
which is impossible. Therefore, $\|S(u)\|_E< r$, and
$u=T(u)=S(u)$, QED.
\end{proof}
We note two useful variants: \btl{ls-global} Let $T : \RR\times
E\longrightarrow E$ be compact, and satisfy $T(0,u)=0$ for every
$u\in E$. Let $C_\pm$ denote the connected component of $(0,0)$ in
the set
\[ \{(\lambda,u)\in\RR\times E : u=T(\lambda,u)\text{ and }
\pm\lambda\geq 0\}.
\]
Then $C_+$ and $C_-$ are both unbounded. \etl
For this result, see \cite{LS,R}.
\btl{ls} Let $T :
[0,1]\times E\longrightarrow E$ be compact, and satisfy $T(0,u)=0$
for every $u\in E$. Assume that the relation $u=T(\sigma,u)$
implies $\|u\|_E<r$. Then equation $T(x,1)=x$ has a solution. \etl
\begin{proof}
Changing the norm on $E$, we may assume that $r=1$.

Let $\ep>0$, and consider the mapping $F_\ep$ defined by
\[ F_\ep(x)=T(\frac x{\|x\|_E},\frac {1-\|x\|_E}\ep)
\text{ if }1-\ep\leq \|x\|_E\leq 1,
\] and
\[ F_\ep(x)=T(\frac x{1-\ep},1)
\text{ if }\|x\|_E\leq 1-\ep,
\]
which is continuous with precompact image. Note that
\[ F_\ep(x)=T(\frac x{\max(1-\ep,\|x\|_E)},\min(1,\frac {1-\|x\|_E}\ep)).
\]
If $\|x\|_E= 1$, $F_\ep(x)=0$. Theorem \ref{th:fp2} applies, and
yields $x_\ep$ in the (open) unit ball such that
$F_\ep(x_\ep)=x_\ep$. For any integer $k\geq 1$, let
$y_p=x_{1/p}$, and $\sigma_p=\min(p(1-\|y_p\|_E),1)$. Since the
image of $T$ is precompact and the $\sigma_p$ are bounded, we may
extract a subsequence such that $(x_{p'},\sigma_{p'})$ tends to a
point $(x_\infty,\sigma_\infty)\in E\times [0,1]$.

If $\sigma_\infty<1$, all $\sigma_{p'}$ are less than 1 for large
$p'$, which means that $1-\|y_{p'}\|_E\geq 1/p'$. It follows that
$\|x_\infty\|=1$. The relation $x_\infty =
T(x_\infty,\sigma_\infty)$ now implies that $\|x_\infty\|<1$:
contradiction.

Therefore, $\sigma_\infty=1$. From the second expression for
$F_\ep$, it follows, by passing to the limit, that
$x_\infty=T(x_\infty,1)$, so that $x\mapsto T(x,1)$ has a fixed
point, QED.
\end{proof}

\subsection{Fixed-point theory and the Dirichlet problem}

We now apply the abstract theorems we just proved.

We begin with an application of theorem \ref{th:schaeffer}. Let
$\alpha$ and $\beta$ denote two numbers in $(0,1)$. Consider the
non-linear operator
\[ A : u\mapsto \sum_{ij}a^{ij}(x,u,\nabla
u)\pa_{ij}u+b(x,u,\nabla u),
\]
where $a^{ij}$ and $b$ are of class $C^\alpha$ in their arguments
say, globally, to fix ideas.\footnote{In many cases, the argument
below automatically yields \emph{a priori} bounds for $u$ and its
derivatives, so that one may truncate the nonlinearities for large
values of their arguments.} Let $g$ be a function of class
$C^{2+\alpha}(\overline\Omega)$. We wish to solve $Au=0$ in
$\Omega$, with $u=g$ on the boundary.

To $A$, we associate linear operators $A_v$, parameterized by a
function $v$:
\[ A_v : u\mapsto \sum_{ij}a^{ij}(x,v,\nabla
v)\pa_{ij}u+b(x,v,\nabla v),
\]
and an operator $T$ defined for $v\in
C^{1+\beta}(\overline\Omega)$, by $T(v)=u$, where $u$ is the
solution of the Dirichlet problem for equation
\[ A_v u =0
\]
in $\Omega$, with  $u=g$ on the boundary. Since $b(x,v,\nabla v)$
is easily seen to be of class $C^{\alpha\beta}$, the Schauder
estimates ensure that $u$ thus defined belongs to
$C^{2+\alpha\beta}(\overline\Omega)$. Note that $u=\sigma T(u)$
means that $\sum_{ij}a^{ij}(x,u,\nabla v)\pa_{ij}u+\sigma
b(x,u,\nabla u)$ in $\Omega$, and $u=\sigma g$ on the boundary.
\btl{4} If there is a $\beta\in(0,1)$ such that solutions in
$C^{2+\alpha\beta}$ of equation $A(u)=0$ in $\Omega$, with
$u=\sigma g$ on the boundary admit an \emph{a priori} bound of the
form $\|u\|_{C^{1+\beta}}\leq M$, with $M$ independent of $u$ and
$\sigma\in [0,1]$, then equation $A(u)=0$ admits at least one
solution with $u=g$ on the boundary. \etl
\begin{proof}
Operator $T$ maps bounded sets of $C^{1+\beta}$ to bounded sets of
$C^{2+\alpha\beta}$, which, by Ascoli's theorem, are relatively
compact in $C^{1+\beta}$. If $v_n\to v$ in $C^{1+\beta}$, the
functions $u_n=T(v_n)$ are bounded in $C^{2+\alpha\beta}$ by
Schauder estimates, and therefore, admit a convergent subsequence
$u_{n'}\to u$ in the $C^2$ topology, and \emph{a fortiori} in
$C^{1+\beta}$. Since
\[\sum_{ij}a^{ij}(x,v_n,\nabla
v_n)\pa_{ij}u_n+b(x,v_n,\nabla v_n)=0,
\]
it follows that $A_v(u)=0$. Therefore $T$ is continuous and
compact. The result now follows from theorem \ref{th:schaeffer}.
\end{proof}

We now turn to an application of theorem \ref{th:ls}, which arises
naturally if we wish $\sigma$ to enter in the definition of
$A_v$---which gives some flexibility in the perturbation argument.
We simply define $u=T(v,\sigma)$ by solving
\[ \sum_{ij}a^{ij}(x,v,\nabla
v,\sigma)\pa_{ij}u+b(x,v,\nabla v,\sigma)=0,
\]
with $u=\sigma g$ on the boundary. Here again, the existence of an
\emph{a priori} $C^{1+\beta}$ bound enables one to conclude that
$T(v,1)$ has a fixed point.

\subsection{Eigenfunctions and applications}

Since the inverse of the Laplacian (with Dirichlet boundary
condition) is compact, Riesz-Fredholm theory (see \cite{B})
ensures that the Laplacian admits a sequence of real eigenvalues
of finite multiplicity, tending to $+\infty$. The Fredholm
alternative holds: $\Delta u+\lambda u=f$ is solvable if and only
if $f$ is orthogonal to the eigenspace corresponding to the
eigenvalue $\lambda$.

We mention two important techniques related to Schauder theory:
bifurcation from a simple eigenvalue (see \cite{S72,Sm}, and the
Krein-Rutman theorem (see \cite{KR,R,Sm}).

\subsubsection{Bifurcation from a simple eigenvalue}
Consider, to fix ideas, the problem
\[ -\Delta u+\lambda u=u^2 \text{ on }\Omega,
\]
with Dirichlet boundary conditions. Assume we have an
eigenfunction $\phi_0$ for the simple eigenvalue $\lambda_0$:
\[ -\Delta \phi_0+\lambda \phi_0=0,
\]
with $\phi_0=0$ on the boundary. Let $Q[u]=\int_\Omega u\phi_0dx$
and $P[u]=u-\phi_0Q[u]$. We seek a family $(\mu(\ep),v(\ep))$ such
that our nonlinear problem admits the solutions $(\lambda,u)$,
where
\[ u=\ep\phi_0+\ep^2v(\ep);\qquad \lambda=\lambda_0+\ep\mu(\ep).
\]
In other words, we seek a curve of solutions which is tangent to
the eigenspace for the eigenvalue $\lambda_0$. If $\ep\mu$ is
small, it is easy to see that $-\Delta+\lambda$ is invertible on
the orthogonal complement of this eigenspace. Projecting on the
orthogonal complement of $\phi_0$, we find
\[ v=(-\Delta+\lambda)^{-1}P[(\phi_0+\ep v)^2],
\]
which may be solved for $v$ as a function of $\mu$, by the
implicit function theorem. This gives a map $v=\Psi[\ep,\mu]$.
Projecting the equation on $\phi_0$ now yields an equation for
$\mu(\ep)$:
\[ \mu(\ep)=Q[(\phi_0+\ep \Psi[\ep,\mu])^2],
\]
which may be solved for $\mu(\ep)$, again by an implicit function
theorem. We find $\mu(\ep)=Q[\phi_0^2]+O(\ep)$. For variants of
this argument, see \emph{e.g.} \cite[Ch.~5]{nlw}.

\subsubsection{Krein-Rutman theorem}

We wish to generalize to infinite dimensions a classical property
of matrices with nonnegative entries.

We first need a variant of theorem \ref{th:ls-global}, which
follows from it using an extension theorem due to Dugundji (see
\cite{Dugundji,R,Sm}).
\btl{cone} Let $K$ be a closed convex cone
with vertex 0, and let $T : \RR^+\times K\longrightarrow K$ be
compact, and assume $T(0,u)=0$ for every $u$. Then the connected
component of $(0,0)$ in the set of all solutions $(\lambda,u)$ of
$u=T(\lambda,u)$ is unbounded.
\etl
As a consequence, we derive
the ``compression of a cone'' theorem:
\btl{comp-cone} Let $K$ be
a closed convex cone with vertex $0$ and non-empty interior, with
the property
\[ K\cap(-K)=\{0\}.
\]
Let $L$ denote a bounded linear operator on $E$ which maps
$K\setminus\{0\}$ to the interior of $K$. Then there is a unit
vector in $K$ and a positive real $\mu$ such that $Lx_0=\mu$.
\etl
\begin{remark}
A typical application: let $E=C^{1+\alpha}(\Omega)$, with $\Omega$
bounded and smooth, take for $L$ the inverse of an elliptic
operator, such as $-\Delta +c(x)$, with $c\geq 0$, and for $K$ the
closure of $\{u\in E : u>0\text{ in }\Omega,\text{ and } \pa u/\pa
n<0 \text{ on }\pa\Omega\}$, where $\pa/\pa n$ denotes the outward
normal derivative. As usual, the compactness is ensured by the
Schauder estimates. The fact that $L$ is a ``compression'' of the
cone $K$, \emph{i.e.} sends $K\setminus\{0\}$ to the interior of
$K$, follows from the Hopf maximum principle. Note that the
conclusion $x_0\in K$ gives directly the information that the
first eigenfunction is positive throughout $\Omega$.
\end{remark}
\begin{proof}
In this proof only, we write $u\geq v$ when $u-v\in K$. Fix $u\in
K\setminus\{0\}$; in particular, $Lu$, which is interior to $K$,
cannot be equal to 0. There is a positive $M$ such that $Lu\geq
u/M$, for otherwise, we would have $Lu-u/M\not\in K$ for all
$M>0$, and, letting $M\to\infty$, we would find $Lu\not\in
\mathop{\rm int}K$.

For any $\ep>0$, consider the compact operator defined by
$T_\ep(\lambda,x)=\lambda L(x+\ep u)$. Let $C_\ep$ be the
connected component of $(0,0)$ in $\RR_+\times K$ of the set of
solutions of $x=T_\ep(\lambda,x)$; we know that it is unbounded.
For such a solution, we have, since $x\in K$, $x=\lambda
Lx+\lambda\ep u\geq \lambda\ep u$. Since $K$ is invariant under
$L$, we find $Lx\geq\lambda\ep Lu\geq \lambda\ep u/M$.
We also have $x\geq \lambda Lx$; therefore,
$x\geq \lambda^2\ep u/M$, and $Lx\geq
(\lambda/M)^2\ep u$. By induction, we find $Lx\geq
(\lambda/M)^n\ep u$ for every $n\geq 1$. If $\lambda>M$, we find,
letting $n\to\infty$, that $\ep u\leq 0$, which means $u\in -K$.
Since $u\in K$, and $u\neq 0$, this is impossible.
Therefore, $C_\ep$ lies in
$[0,M]\times K$. Since $C_\ep$ is unbounded and contains $(0,0)$,
there is, for every $\ep>0$, a unit vector $x_\ep\in
K$ such that
\[ x_\ep=\lambda_\ep L(x_\ep+\ep u)\text{ and
}0\leq\lambda_\ep\leq M.
\]
Since $L$ is compact, there is a sequence $\ep_n\to 0$ and a
$(\mu,x_0)\in [0,M]\times K$ such that $x_{\ep_n}\to x_0$ and
$\lambda_{\ep_n}\to\mu$. It follows that $x_0=\mu Lx_0$ and
$\|x_0\|_E=1$. Since $x_0\neq 0$, we must have $\mu>0$ and also
$x_0\in \mathop{\rm int} K$. This completes the proof.
\end{proof}

\subsection{Method of sub- and super-solutions}

Consider the problem
\begin{equation}\label{eq:ssol} -\Delta u = f(u)
\end{equation}
with Dirichlet boundary conditions on a smooth bounded domain
$\Omega$, and
$f$ smooth, such that $f$ and $df/du$ are both
bounded.\footnote{The boundedness condition is not as restrictive
as it seems: for instance, if $u$ represents a concentration, it
must lie between 0 and 1, and $f$ may be redefined outside $[0,1]$
so that it is bounded.} We assume that we are given two
\emph{ordered sub- and super-solutions} $v$ and $w$: $v$ and $w$
are of class $C^2(\overline\Omega)$, vanish on $\pa\Omega$ and
satisfy, over $\Omega$,
\[ v\leq w;\quad-\Delta v\leq f(v);\quad -\Delta w\geq f(w).
\]
We then have:
\btl{ssol} Problem (\ref{eq:ssol}) admits two
solutions $\underline u$ and $\overline u$ such that
\[ v\leq \underline u \leq \overline u \leq w.
\]
In addition, if $u$ is any solution of (\ref{eq:ssol}) which lies
between $v$ and $w$, then necessarily $\underline u \leq u\leq
\overline u$.
\etl
\begin{remark}
For more results of this kind, see \emph{e.g.} \cite{S,S72}.
\end{remark}
\begin{proof}
Choose a constant $m$ such that $g(u):=f(u)+mu$ is strictly
increasing. Define inductively two sequences $(v_j)_{j\geq 0}$ and
$(w_j)_{j\geq 0}$ by the relations: $v_0=v$; $w_0=w$;
\[ -\Delta v_j+mv_j=g(v_{j-1});\qquad
   -\Delta w_j+mw_j=g(w_{j-1})\text{ for }j\geq 1,
\]
and $v_j=w_j=0$ on $\pa\Omega$. We have $(-\Delta +
m)(v_1-v_0)\geq g(v_0)-g(v_0)=0$, which implies $v_1\geq v_0$ by
the maximum principle.\footnote{See \emph{e.g.} \cite{B} for a
simple proof.} Since $(-\Delta
+m)(v_{j+1}-v_j)=g(v_j)-g(v_{j-1})$, we find by induction
$(-\Delta +m)(v_{j+1}-v_j)\geq 0$, hence $v_{j+1}-v_j\geq 0$.
Therefore, the sequence $(v_j)$ is non-decreasing. Similarly,
$(w_j)$ is non-increasing. In addition, $(-\Delta
+m)(w_0-v_0)=g(w_0)-g(v_0)\geq 0$, and $(-\Delta
+m)(w_j-v_j)=g(w_{j-1})-g(v_{j-1})$ for $j\geq 1$. It follows that
$w_0\geq v_0$ and, by induction, $w_j\geq v_j$. We conclude that
$\underline u:=\lim_{j\to\infty}v_j$ and $\overline
u:=\lim_{j\to\infty}w_j$ exist and satisfy
\[ v_0\leq v_1\leq\cdots\underline u
\leq\overline u\leq\cdots w_1\leq w_0.
\]
By construction, the $v_j$ are bounded. Therefore, $(-\Delta
+m)v_j$ is bounded independently of $j$. Consider now any ball
$B_r$ such that $\overline B_{2r}\subset\Omega$, and fix
$\alpha\in(0,1)$. The interior $C^{1+\alpha}$ Schauder estimates
ensure first that the $v_j$ are, for $j\geq 1$ bounded in $C^1(B_{3r/2})$,
independently of $j$. This implies in particular
a $C^\alpha$ bound on $g(v_j)$. The $C^{2+\alpha}$ Schauder
estimates now ensure that the $v_j$ are bounded in $C^2(B_r)$ for $j\geq
1$, and that their second derivatives are
equicontinuous. It follows that one may extract
a subsequence $v_{j'}$ which converges to $\underline u$
in $C^2(B_r)$. It follows that $(-\Delta
+m)\underline u=f(\underline u)+m\underline u$; so that
$\underline u$ solves (\ref{eq:ssol}). A similar argument applies
to $\overline u$. Finally, if $u$ is a solution such that $v_0\leq
u\leq w_0$, we have $(-\Delta+m)(v_0-u)\leq g(v_0)-g(u)$ and
$(-\Delta+m)(v_j-u)=g(v_{j-1})-g(u)$ for $j\geq 1$. It follows, by
induction, that $v_j\leq u$ for all $j$. Similarly, $w_j\geq u$
for all $j$. Passing to the limit, we find $\underline u\leq u\leq
\overline u$.
\end{proof}

\subsection{Asymptotics near isolated singularities or at infinity}

We give three simple examples where Schauder estimates help
understand the behavior of solutions at infinity or at isolated
singularities.

\subsubsection{Liouville property} Regularity theory gives a
simple proof of the Liouville property for scale-invariant
equations. Consider for instance the $p$-Laplace equation
$A_pu:=\mathop{\rm div}(|\nabla u|^{p-2}\nabla u)=0$,
where $p>1$. We have \cite{SK-th} an interior
$C^1$ estimate of the form
\[ \|u\|_{C^1(B_1)}\leq C \sup_{B_2}|u|.
\]
Applying it to $u(Rx)$, we find, since $\nabla (u(Rx))=R(\nabla
u)(Rx)$,
\[ \sup_{B_R}|\nabla u|\leq \frac CR \sup_{B_{2R}}|u|.
\]
Letting $R\to\infty$, it follows immediately that any solution
which is bounded on all of $\RR^n$ is constant. A more subtle
result of this type is: any nonnegative solution on
$\RR^n\setminus 0$ is necessarily constant \cite[p.~602]{KV}.

\subsubsection{Asymptotics at infinity} If $u$ solves $Lu=f$ on an
exterior domain $\{|x|>\rho\}$, where the coefficients of $L$ tend
to constants at infinity, one may hope to apply the above scaling
argument on balls $B_R(x_R)$, where, say, $|x_R|\geq 2R\to\infty$.
In this way, it is possible to obtain weighted estimates at
infinity, which are useful in solving the constraints equations in
General Relativity \cite{cc} or in asymptotics for solutions of the
Ginzburg-Landau equation \cite{pr}.

\subsubsection{Asymptotics near isolated singularities}
\label{sec:isol-sing}

The $C^{1+\alpha}$
Schauder-type estimates for the  $p$-Laplace equation $A_pu=0$
may be used to determine the behavior
at the origin of positive solutions in a
punctured neighborhood of the origin. For instance, if $n\geq 2$
and $p<n$ and
\[\mu(r)=\frac{p-1}{n-p}(n\omega_n)^{-1/(p-1)}r^{(p-n)/(p-1)},
\]
resp.\ $\mu(r)=(n\omega_n)^{-1/(n-1)}\ln(1/r)$ for $p=n$,
then any solution which is bounded above and below by positive
multiples of $\mu(|x|)$ must in fact be of the form $\gamma
\mu(|x|)+O(1)$ for some constant $\gamma$. In fact,
\[ -A_p u =\gamma|\gamma|^{p-2}\delta_0,
\]
in the sense of distributions, where $\delta_0$ is the Dirac
distribution at the origin.

Regularity estimates
enter the argument as follows: to consider the family of functions
$u_r(y)=u(ry)/\mu(r)$, which, by Schauder-type $C^{1+\alpha}$
estimates, satisfies a compactness condition on annular domains.
Letting $r\to 0$ along a suitable sequence, we find that $u_r$
tends to a solution $v$ of $A_pv=0$ outside the origin, and we may
arrange so that $v(y)/\mu(|y|)$ has an interior maximum $\gamma$.
At such a maximum, the gradient of $v$ is proportional to the
gradient of $\mu$ and thus does not vanish, so that the equation
is in fact uniformly elliptic near the point of maximum; this
makes it possible to conclude that $v/\mu$ is in fact constant,
using the strong maximum principle (as pointed out in
\cite[p.~263]{GT}, the difference $w=u-\gamma \mu$ solves a linear
elliptic equation). See \cite{SK-th,KV} for details and further
results. For $p=n$, one can see that $u-\gamma \mu$ has a limit at
the origin; this fact has found recent applications
\cite{bht,colesanti}. For similar results for semilinear
equations, see \cite{gs,cgs}.

\subsection{Asymptotics for boundary blow-up}

We give a typical application of Fuchsian reduction to elliptic
problems \cite{SK-ln,SK-gaeta}.
The proof structure hinges on general properties of the
Fuchsian Reduction process and is therefore liable of application
to many other situations.

\subsubsection{Main result and structure of proof}
Let $\Omega\subset\RR^n$, $n\geq 3$, be a bounded domain of class
$C^{2+\alpha}$, where $0<\alpha<1$. Consider the Loewner-Nirenberg
equation in the form
\begin{equation}\label{eq:LN}
-\Delta u + n(n-2)u^{\frac{n+2}{n-2}}=0.
\end{equation}
It is known \cite{LN,BF,BM,BMd,MV} that this equation admits a
maximal solution $u_\Omega$, which is positive and smooth inside
$\Omega$; it is the limit of the increasing sequence $(u_m)_{m\geq
1}$ of solutions of (\ref{eq:LN}) which are equal to $m$ on the
boundary. It arises in many contexts \cite{BF,LN}. We note for later
reference the monotonicity property: if
   $\Omega\subset\Omega'$, then any classical solution in $\Omega'$
   restricts to a classical solution in $\Omega$, so that
   \begin{equation}\label{eq:monot}
   u_{\Omega'}\leq u_\Omega;
  \end{equation}
it follows easily from the maximality of $u_\Omega$.
The \emph{hyperbolic radius} of $\Omega$ is the function
\[
v_\Omega:=u_\Omega^{-2/(n-2)};
\]
it vanishes on $\pa\Omega$. Let $d(x)$ denote the distance of $x$
to $\pa\Omega$. It is of class $C^{2+\alpha}$ near $\pa\Omega$. We
prove
\begin{theorem}\label{th:1}
If $\Omega$ is of class $C^{2+\alpha}$, then $v_\Omega\in
C^{2+\alpha}(\overline\Omega)$, and
\[ v_\Omega(x) = 2d(x)-d(x)^2[H(x)+o(1)]
\]
as $d(x)\to 0$, where $H(x)$ is the mean curvature at the point of
$\pa\Omega$ closest to $x$.
\end{theorem}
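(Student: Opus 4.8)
The plan is a Fuchsian reduction. Writing $u=v^{-(n-2)/2}$, a direct computation turns (\ref{eq:LN}) into the quadratically degenerate equation
\[ v\,\Delta v-\tfrac n2|\nabla v|^2+2n=0\quad\text{in }\Omega,\qquad v=0\ \text{on }\pa\Omega \]
for $v=v_\Omega$. For the flat model $v=g(d)$ the constant term forces $g(d)=2d$, the hyperbolic radius of a half-space, which pins down the leading behaviour $v_\Omega\sim 2d$. Inside $\Omega$ one has $v_\Omega>0$, so the equation is uniformly elliptic on compact subsets and $v_\Omega$ is smooth there; only the behaviour in a boundary collar $\Omega_{\delta_0}=\{0<d<\delta_0\}$ is at issue, and there $d\in C^{2+\alpha}$, $|\nabla d|=1$, $-\Delta d=\sum_j\kappa_j/(1-\kappa_j d)$ by Theorem \ref{th:5}, so $\Delta d\in C^\alpha(\overline\Omega_{\delta_0})$ with $\Delta d\to-(n-1)H$ on $\pa\Omega$.

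Next I would get two-sided bounds near $\pa\Omega$. Since $t\mapsto t^{-(n-2)/2}$ is decreasing, the monotonicity (\ref{eq:monot}) is a comparison statement for $-\Delta u+n(n-2)u^{(n+2)/(n-2)}$, whose zeroth-order term is increasing; equivalently, $2d-Cd^2$ and $2d+Cd^2$ are, for $C$ large, respectively a sub- and a super-solution of the $v$-equation on a collar (the term $4d\,\Delta d$ being absorbed by $4(1-n)Cd$), so, passing to $u=v^{-(n-2)/2}$ and using the maximality of $u_\Omega$ and the comparison principle for the Loewner--Nirenberg equation, one gets $2d-Cd^2\le v_\Omega\le 2d+Cd^2$ on $\Omega_{\delta_0}$ (alternatively one can compare with the explicit hyperbolic radii $v_{B_R}=(R^2-|x|^2)/R=2d-d^2/R$ of balls, using the uniform interior and exterior ball conditions of $C^{2+\alpha}$ domains). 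In particular $w:=v_\Omega-2d=O(d^2)$, hence in particular $O(d^\alpha)$, and the weighted interior estimates then give $|\nabla w|=O(d)$ and $|\nabla^2 w|=O(1)$ on $\Omega_{\delta_0}$.

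The heart of the proof is the reduction proper. Substituting $v=2d+w$ and multiplying by $d/2$ puts the equation in divergence form,
\[ \pa_i\!\left(d^2\pa_i w\right)-(n+2)\,d\,\nabla d\cdot\nabla w+(d\,\Delta d)\,w=d\,f_w,\qquad f_w:=-2d\,\Delta d-\tfrac12 w\Delta w+\tfrac n4|\nabla w|^2, \]
so the operator on the left is of type (I) and of type (II) (constant principal part, coefficients bounded and $C^\alpha$ on $\Omega_{\delta_0}$), and $f_w$ is bounded by the preceding step. Its indicial polynomial is $s(s-n-1)$, whose roots $0$ and $n+1$ leave the interval $(1,n+1)$ — which contains $2+\alpha$ since $n\ge 3$ — unobstructed. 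Theorem \ref{th:FIb} gives $w\in C^\alpha(\overline\Omega_{\delta_0/2})$ and $dw\in C^{1+\alpha}$; feeding this back and iterating the estimates of Theorems \ref{th:FIa}--\ref{th:FIIa} (the quadratic terms are lower-order perturbations near $\pa\Omega$, since $w=O(d^2)=o(d)$, and are handled by the weighted interior estimates) upgrades $f_w$ and eventually shows that the right-hand side of $\Delta w=v^{-1}\bigl(-4d\,\Delta d+2n\,\pa_n w-2\Delta d\,w+\tfrac n2|\nabla w|^2\bigr)$ extends to a function of class $C^\alpha$ up to $\pa\Omega$; since $w=0$ on $\pa\Omega$ and $\pa\Omega\in C^{2+\alpha}$, the boundary Schauder estimate then yields $w\in C^{2+\alpha}(\overline\Omega_{\delta_0/2})$, hence $v_\Omega\in C^{2+\alpha}(\overline\Omega)$. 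Finally, matching the $O(d)$ terms after writing $v=2d+a\,d^2+o(d^2)$ gives $a=-H$, so the Taylor expansion of the $C^{2+\alpha}$ function $v_\Omega$ along normals reads $v_\Omega(x)=2d(x)-d(x)^2[H(x)+o(1)]$.

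The main obstacle is the bootstrap in the last paragraph: one must arrange that at each stage the structural hypotheses of Theorems \ref{th:FIa}--\ref{th:FIIa} hold — right-hand side of the form $d\,f$ with $f$ in the required Hölder class, remainder of order $O(d^\alpha)$, operator of type (I) or (II) — and, in particular, prove that $v^{-1}(\cdots)$ is $C^\alpha$ up to $\pa\Omega$, which forces one to track the weighted Hölder norms of $w$, $\nabla w$ and $\nabla^2 w$ jointly so that the quadratic terms $w\Delta w$, $|\nabla w|^2$ and the ratios $w/d$, $\pa_n w/d$ are controlled. All estimates must moreover be kept uniform along $\pa\Omega$ by a compactness argument, since $d$ is of class $C^{2+\alpha}$ only near the boundary; and since the boundary value $-H$ of $\tfrac12\pa_n^2 v_\Omega$ is itself only of class $C^\alpha$, one cannot continue the formal expansion and must obtain the $C^{2+\alpha}$ regularity directly.
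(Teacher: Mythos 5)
Your first two steps are sound and essentially the paper's: the two-sided bound $2d-Cd^2\le v_\Omega\le 2d+Cd^2$ obtained by comparison with the explicit solutions on interior and exterior tangent balls is the first comparison argument (theorem \ref{th:I}), and your divergence-form equation for $w=v_\Omega-2d$ is a correct Fuchsian rewriting of type (I)/(II). The genuine gap is the final bootstrap. Because you renormalize by subtraction only, your unknown differs from the paper's $w=(v_\Omega-2d)/d^2$ by two powers of $d$, and the theorems you invoke deliver conclusions in exactly the wrong weighted class: theorem \ref{th:FIb} applied to your equation gives $w\in C^\alpha$ and $dw\in C^{1+\alpha}$ up to the boundary, and theorem \ref{th:FIIa} would give $d^2w\in C^{2+\alpha}$ --- none of these yields $w\in C^{2+\alpha}$, i.e.\ you are short by precisely the two powers of $d$ you dropped. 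Your assertion that ``iterating'' theorems \ref{th:FIa}--\ref{th:FIIa} eventually shows that $v^{-1}(\cdots)$, the right-hand side of the equation for $\Delta w$, extends $C^\alpha$ up to $\pa\Omega$ is exactly the missing step: it requires $w/d$, $\nabla d\cdot\nabla w/d$ and $|\nabla w|^2/d$ to be of class $C^\alpha$ up to the boundary, which is essentially equivalent to the conclusion and is not produced by any estimate you quote. (Your intermediate claim that weighted interior estimates give $|\nabla w|=O(d)$ is also unjustified as stated; in the paper this is theorem \ref{th:II} and already requires the Fuchsian estimate of theorem \ref{th:FIa}.) The remark about the indicial roots $0$ and $n+1$ is heuristic: no theorem in the paper converts indicial-root information into unweighted $C^{2+\alpha}$ regularity of your $w$.

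The paper closes exactly this gap with two steps that have no counterpart in your sketch. Working with $(v_\Omega-2d)/d^2$, the flatness hypothesis $g=O(d^\alpha)$ of theorem \ref{th:FIb} fails, since that quotient tends to $-H\neq 0$ at the boundary; one must therefore first construct an approximate solution $w_0$ with $Lw_0+2\Delta d=0$ and $w_0\,\rest{\pa\Omega}=-H$ by inverting the model operator $L_0$ in boundary-adapted coordinates, patching with a partition of unity and removing the error by a Dirichlet problem with barrier $d^\alpha$ (theorem \ref{th:III}), and then prove $w-w_0=O(d)$ by a second comparison argument with barriers $w_0\pm Ad$ and $\ep[d^{-2}+Bd^{-1}]$ (theorem \ref{th:IV}); only after these steps do theorems \ref{th:FIb} and \ref{th:FIIa} apply and give $v_\Omega-2d=d^2w\in C^{2+\alpha}$, the boundary value $-H$ --- which you obtain only a posteriori by ``matching'' --- being built into $w_0$. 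Some construction of this kind, an approximate solution carrying the mean curvature (or a genuinely sharper Fuchsian theorem than those stated in the paper), is indispensable; without it your bootstrap does not close.
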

This result is optimal, since $H$ is of class $C^\alpha$ on the
boundary. It follows from theorem \ref{th:1} that $v_\Omega$ is a
\emph{classical solution} of
\[v_\Omega\Delta v_\Omega=\frac{n}2(|\nabla v_\Omega|^2-4),
\]
even though $u_\Omega$ cannot be interpreted as a weak solution of
(\ref{eq:LN}), insofar as $u_\Omega^{\frac{n+2}{n-2}}\sim
(2d)^{-1-n/2}\not\in L^1(\Omega)$.

We now give an idea of the proof.

We begin by performing a Fuchsian reduction, that is, we introduce
the degenerate equation solved by a renormalized unknown, which
governs the higher-order asymptotics of the solution; in this
case, a convenient renormalized unknown is
\[
w:=(v_\Omega-2d)/d^2.
\]
It follows from general arguments, see the overview in
\cite{SK-gaeta,SK-fr}, that the equation for $w$ has a very
special structure: the coefficient of the derivatives of order $k$
is divisible by $d^k$ for $k=0$, 1 and 2, and the nonlinear terms
all contain a factor of $d$. Such an equation is said to be
\emph{Fuchsian}.

In the present case, one finds
\begin{equation}\label{eq:fr}
\frac {2v^{n/2}}{n-2}\{ -\Delta
u_\Omega+n(n-2)u_\Omega^{(n+2)/(n-2)}\} =Lw+2\Delta d-M_w(w),
\end{equation}
where
\[
L:=d^2\Delta+(4-n)d\nabla d\cdot\nabla+(2-2n),
\]
and $M_w$ is a linear operator with $w$-dependent coefficients,
defined by
\[
M_w(f):=\frac{nd^2}{2(2+dw)} [2f\nabla d\cdot\nabla w+d\nabla
w\cdot\nabla f]-2df\Delta d.
\]
The proof now consists in a careful bootstrap argument in which
better and better information on $w$ results in better and better
properties of the degenerate linear operator $L-M_w$. A key step
is the inversion of the analogue of $L$ in the half-space, which
plays the role of the Laplacian in the usual Schauder theory.

Equation (\ref{eq:fr}) needs only to be studied in the
neighborhood of the boundary. Let us therefore introduce
$C^{2+\alpha}$ thin domains $\Omega_\delta=\{0<d<\delta\}$, such
that $d\in C^{2+\alpha}(\overline\Omega_\delta)$, and
$\pa\Omega_\delta=\pa\Omega\cup\Gamma$ consists of two
hypersurfaces of class $C^{2+\alpha}$.

Recall that
\[ \|u\|_{C_\#^{k+\alpha}(\overline\Omega_\delta)}
:=\sum_{j=0}^k \|d^ju\|_{C^{j+\alpha}(\overline\Omega_\delta)}.
\]

The proof proceeds in five steps, corresponding to five
theorems: first, a comparison argument combined with
Schauder estimates gives
\begin{theorem}\label{th:I}
$w$ and $d^2\nabla w$ are bounded near $\pa\Omega$.
\end{theorem}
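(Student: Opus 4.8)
The plan is to reduce the statement to two separate estimates valid near $\pa\Omega$: that $|v_\Omega-2d|\leq Cd^2$, and that $|\nabla v_\Omega|\leq C$. Indeed, from $v_\Omega=2d+d^2w$ we get $d^2\nabla w=\nabla v_\Omega-2\nabla d-2dw\,\nabla d$, and $|\nabla d|=1$ near $\pa\Omega$ by theorem \ref{th:5}; so the first estimate is exactly the boundedness of $w$, and, once it is known, the second is exactly the boundedness of $d^2\nabla w$. The first estimate I would obtain by a comparison argument using the explicit maximal solutions of (\ref{eq:LN}) on balls, and the second by the interior Schauder estimate applied on shrinking balls.

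For the comparison step, a direct computation shows that $u=(R/(R^2-|x-c|^2))^{(n-2)/2}$ solves (\ref{eq:LN}) on $B_R(c)$, so that $v_{B_R}(x)=(R^2-|x-c|^2)/R$, and that $u=(R/(|x-c|^2-R^2))^{(n-2)/2}$ solves it on $\RR^n\setminus\overline{B_R(c)}$, with associated $v=(|x-c|^2-R^2)/R$. Since $\pa\Omega$ is $C^{2+\alpha}$ and compact, there is a uniform $R_0>0$ such that, writing $\nu$ for the inner normal, $B_{R_0}(x_0+R_0\nu)\subset\Omega$ and $\overline{B_{R_0}(x_0-R_0\nu)}\cap\Omega=\emptyset$ for every $x_0\in\pa\Omega$. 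By theorem \ref{th:5}, for $d(x)$ small $x$ lies on the normal segment through its unique nearest point $x_0$, and one checks at once that $x$ then has distance exactly $d(x)$ to the sphere of either osculating ball; evaluating the explicit $v$'s along that normal gives $v_{B_{R_0}}(x)=2d(x)-d(x)^2/R_0$ and $v_{\RR^n\setminus\overline{B_{R_0}}}(x)=2d(x)+d(x)^2/R_0$. Now the monotonicity property (\ref{eq:monot}) applied to $B_{R_0}(x_0+R_0\nu)\subset\Omega$ gives $u_\Omega\leq u_{B_{R_0}}$, hence $v_\Omega\geq v_{B_{R_0}}$ there (using that the maximal solution on a ball is the explicit radial one); while the restriction to $\Omega$ of the explicit solution on $\RR^n\setminus\overline{B_{R_0}(x_0-R_0\nu)}$ is a positive solution of (\ref{eq:LN}) on $\Omega$, hence $\leq u_\Omega$, so $v_\Omega\leq v_{\RR^n\setminus\overline{B_{R_0}}}$. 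Therefore $|v_\Omega-2d|\leq d^2/R_0$ near $\pa\Omega$, i.e. $w$ is bounded.

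For the Schauder step, the two-sided bound just proved gives $c\,d(x)^{-(n-2)/2}\leq u_\Omega\leq C\,d(x)^{-(n-2)/2}$ near $\pa\Omega$, and in fact on all of $B_{d(x)/2}(x)$ (since there $d\sim d(x)$). Consequently $\sup_{B_{d(x)/2}(x)}|\Delta u_\Omega|=n(n-2)\sup u_\Omega^{(n+2)/(n-2)}\leq C\,d(x)^{-(n+2)/2}$, so the interior $C^1$ estimate of theorem \ref{th:c1-mp}, rescaled and recentred on $B_{d(x)/2}(x)$, yields $|\nabla u_\Omega(x)|\leq C\,d(x)^{-n/2}$. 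Since $\nabla v_\Omega=-\tfrac{2}{n-2}\,u_\Omega^{-n/(n-2)}\nabla u_\Omega$ and $u_\Omega^{-n/(n-2)}\leq C\,d(x)^{n/2}$, this gives $|\nabla v_\Omega|\leq C$ near $\pa\Omega$, hence $d^2\nabla w$ bounded, which completes the proof.

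The main difficulty is getting the precision $O(d^2)$ in the comparison step: comparison with the flat half-space model $v=2d$ only yields $v_\Omega=2d+o(d)$ and does not ``see the curvature'', so one is forced to use osculating balls (equivalently, paraboloid barriers) and to keep track of the exact distance of $x$ to the osculating spheres; one also needs the standard fact that the maximal solution on a ball coincides with the explicit radial one, which rests on uniqueness of the radial solution of (\ref{eq:LN}) that blows up at a prescribed radius. The Schauder step and the passages between $u_\Omega$, $v_\Omega$ and $w$ are then entirely routine.
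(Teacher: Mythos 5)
Your proof is correct and follows essentially the same route as the paper's: a two-sided comparison with the explicit blow-up solutions on interior and exterior tangent spheres of radius $r_0$ gives $|w|\leq C$ near $\pa\Omega$ (the paper handles the exterior barrier by first shrinking its radius by $\varepsilon$ before invoking maximality, and, like you, it implicitly uses that the explicit radial solution is the maximal one on a ball), after which a scaled interior gradient estimate bounds $d^{n/2}\nabla u_\Omega$ and hence $d^2\nabla w$ through the same chain-rule identity $d^2\nabla w=-2(1+dw)\nabla d-\tfrac{2}{n-2}u_\Omega^{-n/(n-2)}\nabla u_\Omega$. The only cosmetic difference is in the gradient step: the paper rescales $u_\Omega$ to the unit ball via $u_\sigma(x)=\sigma^{(n-2)/2}u(P+\sigma x)$ and cites interior estimates for the nonlinear equation from \cite{SK-th,BM}, whereas you apply the linear interior $C^1$ estimate of theorem \ref{th:c1-mp} on balls of radius $d(x)/2$, using the sup bound already obtained to control $\Delta u_\Omega$ there --- an equally valid instance of the same scaling idea.
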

Theorem \ref{th:I} ensures that $L-M_w$ is of type (I). Theorem
\ref{th:FIa} then implies that $d\nabla w$ is bounded near the boundary;
going back to the definition of $M_w$, we find $M_w(w)=O(d)$; this
yields the next theorem:
\begin{theorem}\label{th:II}
$d\nabla w$ and $M_w(w)/d$ are bounded near $\pa\Omega$.
\end{theorem}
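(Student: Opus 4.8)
The plan is to read off the first assertion directly from Theorem \ref{th:FIa}, and then to obtain the second by substituting the resulting bounds into the explicit formula for $M_w$.

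First I would use equation (\ref{eq:fr}). Since $u_\Omega$ solves the Loewner--Nirenberg equation (\ref{eq:LN}), the left-hand side of (\ref{eq:fr}) vanishes, so $w$ satisfies
\[
(L-M_w)w = -2\Delta d
\]
near $\pa\Omega$. By Theorem \ref{th:5}, $\Delta d=-\sum_j\kappa_j/(1-\kappa_j d)$ is bounded (indeed of class $C^\alpha$) in a thin neighborhood $\Omega_\delta$ of the boundary, and by Theorem \ref{th:I} the function $w$ is bounded there while $L-M_w$ is of type (I). Theorem \ref{th:FIa}, applied with $A=L-M_w$, $g=w$ and $f=-2\Delta d$, then gives that $d\nabla w$ is bounded near $\pa\Omega$ (and, more precisely, that $dw$ and $d^2\nabla w$ belong to $C^\alpha(\overline\Omega_{\delta'})$ for $\delta'<\delta$); this is the first half of the statement.

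For the second half I would return to the definition of $M_w$ with $f=w$, namely
\[
M_w(w)=\frac{nd^2}{2(2+dw)}\,[\,2w\,\nabla d\cdot\nabla w+d\,|\nabla w|^2\,]-2dw\,\Delta d,
\]
so that, dividing by $d$,
\[
\frac{M_w(w)}{d}=\frac{n}{2(2+dw)}\,[\,2w\,\nabla d\cdot(d\nabla w)+|d\nabla w|^2\,]-2w\,\Delta d.
\]
Each ingredient on the right is controlled near $\pa\Omega$: $w$ is bounded (Theorem \ref{th:I}), $d\nabla w$ is bounded by the step just proved, $|\nabla d|=1$ and $\Delta d$ is bounded (Theorem \ref{th:5}), and since $v_\Omega=2d+d^2w$ gives $2+dw=v_\Omega/d$ with $dw\to0$ as $d\to0$, the denominator $2+dw$ stays bounded away from $0$ on a sufficiently thin neighborhood of the boundary. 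Hence $M_w(w)/d$ is a finite sum of products of bounded functions, and is therefore bounded near $\pa\Omega$.

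I do not expect a serious obstacle here: the substantive work --- that $L-M_w$ is of type (I), and that type (I) operators yield boundedness of $d\nabla g$ --- has already been carried out in Theorem \ref{th:I} and Theorem \ref{th:FIa}. The only points requiring care are bookkeeping: checking that in $M_w(w)/d$ every factor of $d$ is either paired with a $\nabla w$ to form the bounded quantity $d\nabla w$ or cancelled by the explicit division, and verifying that the denominator does not degenerate, which follows from the smallness of $dw$ near the boundary.
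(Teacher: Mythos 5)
Your proposal is correct and follows the paper's own route: Theorem \ref{th:I} makes $L-M_w$ of type (I) with $w$ bounded, Theorem \ref{th:FIa} applied to $(L-M_w)w=-2\Delta d$ gives the boundedness of $d\nabla w$, and then $M_w(w)/d$ is bounded by inspection of the formula for $M_w$, using $|\nabla d|=1$, the boundedness of $\Delta d$, and the fact that $2+dw$ stays away from $0$ since $dw\to 0$ at the boundary. This is exactly the argument sketched in the paper between Theorems \ref{th:I} and \ref{th:II}, with the bookkeeping made explicit.
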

At this stage, we have $Lw+2\Delta w=O(d)$. In order to use theorem
\ref{th:FIa}, we need to subtract from $w$ a function $w_0$ such
that $Lw_0+2\Delta=0$ with controlled boundary behavior, and $w-w_0=O(d)$;
the function $w_0$ is constructed in:
\begin{theorem}\label{th:III}
If $\delta$ is sufficiently small, there is a $w_0\in
C_\#^{2+\alpha}(\overline\Omega_\delta)$ such that
\begin{equation}\label{eq:Lw0}
Lw_0+2\Delta d=0
\end{equation}
in $\Omega_\delta$. Furthermore
\begin{equation}\label{eq:H}
w_0\,\rest{\pa\Omega}=-H,
\end{equation}
where $H=-(\Delta d)/(n-1)$ is the mean curvature of the boundary.
\end{theorem}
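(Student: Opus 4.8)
The plan is to treat $Lw_0+2\Delta d=0$ as a Fuchsian boundary value problem localized near $\pa\Omega$ and to obtain $w_0$ as an explicit leading term plus a lower-order correction controlled by the Fuchsian estimates of section~5. First I would record the algebra. Using $d^2\Delta=\pa_i(d^2\pa_i)-2d\nabla d\cdot\nabla$, one sees that $L=d^2\Delta+(4-n)d\nabla d\cdot\nabla+(2-2n)$ is simultaneously of type~(I) and of type~(II): the principal part is $\delta^{ij}$ (uniformly elliptic, $C^\alpha$), the first-order coefficient is $(4-n)\pa_id\in C^{1+\alpha}$, and the zeroth-order coefficient is the constant $2-2n<0$. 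In the collar coordinates $(T,Y)$ of theorem~\ref{th:5}, $L=T^2[\pa_T^2+(\Delta d)\pa_T+\Delta_{g_T}]+(4-n)T\pa_T+(2-2n)$, where $\Delta_{g_T}$ is the Laplace--Beltrami operator of the level set $\{d=T\}$, so the indicial polynomial at $T=0$ is $P(\lambda)=\lambda(\lambda-1)+(4-n)\lambda+(2-2n)=(\lambda-(n-1))(\lambda+2)$, with roots $n-1$ and $-2$. Since $n\geq3$ we have $0\in(-2,n-1)$ and $P(0)=-2(n-1)\neq0$; hence evaluating the equation on $\pa\Omega$, where every term carrying a factor of $d$ vanishes for a function in $C^{2+\alpha}_\#$, forces $w_0\rest{\pa\Omega}=-2\Delta d\rest{\pa\Omega}/P(0)$, and by theorem~\ref{th:5}, $\Delta d\rest{\pa\Omega}=-(n-1)H$, whence $w_0\rest{\pa\Omega}=-H$. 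This computation both pins down the asserted boundary value and dictates the normalization of the approximate solution.

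For existence I would write $w_0=\omega+g$, where $\omega$ is an explicit approximate solution and $g$ a lower-order correction. The term $\omega$ should lie in $C^{2+\alpha}_\#(\overline\Omega_\delta)$, satisfy $\omega\rest{\pa\Omega}=-H$, and leave a remainder $L\omega+2\Delta d=d\,h$ with $h\in C^\alpha(\overline\Omega_\delta)$. Its construction exploits the special structure of the forcing: by theorem~\ref{th:5}, $-2\Delta d=2\sum_j\kappa_j/(1-\kappa_jd)$ is, along each normal, a convergent power series in $d$ whose coefficients are $C^\alpha$ functions of the foot point; inverting the indicial relation order by order in $d$ is legitimate because $P(k)\neq0$ for every nonnegative integer $k\neq n-1$, while the resonant order $k=n-1$ contributes at worst a $d^{n-1}\ln d$ term, which still lies in $C^{2+\alpha}_\#$ since $n+1>2+\alpha$; truncating at a suitable order produces $\omega$. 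Throughout one must work in the weighted spaces: $-H$ is only $C^\alpha$ on $\pa\Omega$, so $\omega$ cannot be taken smooth, but $d^2\omega\in C^{2+\alpha}$ is compatible with a merely $C^\alpha$ trace. The correction then solves $Lg=-d\,h$; a bounded solution exists by the maximum principle --- this is where the sign condition $c=2-2n<0$ is used (for instance via Perron's method on $\Omega_\delta$ with constant barriers and data prescribed on the interior part $\Gamma$ of $\pa\Omega_\delta$) --- and, knowing in addition that $g$ has vanishing trace on $\pa\Omega$ and is $O(d^\alpha)$, theorem~\ref{th:FIIa} (with $L$ of type~(II), $-h\in C^\alpha$, $g=O(d^\alpha)$, and the intermediate regularity of $g$ supplied by theorems~\ref{th:FIa} and~\ref{th:FIb}) upgrades this to $d^2g\in C^{2+\alpha}(\overline\Omega_\delta)$, i.e.\ $g\in C^{2+\alpha}_\#$. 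Hence $w_0=\omega+g$ satisfies $Lw_0+2\Delta d=0$, lies in $C^{2+\alpha}_\#$, and has trace $-H$, the $O(d^\alpha)$ part $g$ contributing nothing. The smallness of $\delta$ keeps us inside the collar of theorem~\ref{th:5} where $d\in C^{2+\alpha}$ and $|\nabla d|=1$, where the series for $\Delta d$ converges ($|\kappa_j|\delta<1$), and where the barrier and perturbation arguments are valid.

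The main obstacle is the solvability-with-flatness of the degenerate equation $Lg=-d\,h$, that is, producing a bounded $g=O(d^\alpha)$ possessing a boundary trace. Unlike in classical Schauder theory, $L$ is not elliptic up to the boundary, so the boundary estimates of section~4.2 do not apply to $w_0$ or $g$ directly; the maximum principle supplies an $L^\infty$ bound but neither the trace nor the regularity, and theorems~\ref{th:FIa}, \ref{th:FIb} and~\ref{th:FIIa} are bootstrap statements that presuppose exactly this flatness. The genuine analytic input is therefore the inversion of the model Fuchsian operator on a half-space $\{T>0\}$ --- the analogue of the Laplacian mentioned in the introduction --- in weighted Hölder norms, with variable coefficients handled by freezing coefficients at a boundary point and a contraction argument for $\delta$ small, in complete parallel with the passage from the Laplacian to general elliptic operators in ordinary Schauder theory. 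A secondary, more routine difficulty is the bookkeeping for $\omega$: verifying that it lands in $C^{2+\alpha}_\#$ and that its remainder is exactly divisible by $d$ with $C^\alpha$ quotient, which requires care precisely because the coefficients of the $d$-power series of $\Delta d$ are only Hölder in the tangential variables.
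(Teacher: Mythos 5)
Your reading of the structure is right in two respects: the trace value $-H$ is indeed forced by evaluating the equation at $d=0$ (the paper gets it instead from the construction, via $w_1\rest{\pa\Omega}=(2\Delta d)/(2n-2)$ and $w_2=O(d^\alpha)$), and the residual correction is indeed handled by a barrier plus theorems \ref{th:FIb} and \ref{th:FIIa}, much as in the paper's Lemma \ref{lem:4}. But the construction of your approximate solution $\omega$ has a genuine gap, and it is exactly the point you dismiss as ``routine bookkeeping.'' The order-by-order indicial inversion produces candidate terms $c_k(Y)T^k$ whose coefficients (starting with $c_0=-H$) are only $C^\alpha$ in the tangential variables, because $\pa\Omega$ is only $C^{2+\alpha}$. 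The operator $L$ contains $d^2\Delta'$ and $d\,\tilde\nabla d\cdot\nabla'$, so $L\omega$ is not even defined for such an $\omega$: already the minimal truncation $\omega=-H(Y)$ requires two tangential derivatives of $H$ that do not exist, and a fortiori the remainder cannot be shown to be $d\,h$ with $h\in C^\alpha$. The indicial polynomial only controls the radial (Euler) part of $L_0$; treating $T^2\Delta'$ as negligible is legitimate for smooth data but not here, and this is precisely why the paper does \emph{not} use a formal series. Relatedly, your constant barriers for $Lg=-dh$ give only an $L^\infty$ bound; the flatness $g=O(d^\alpha)$ needed to invoke theorems \ref{th:FIb} and \ref{th:FIIa} requires a degenerate barrier such as $Ad^\alpha$ (the paper uses $-L(d^\alpha)=d^\alpha[(\alpha+2)(n-1-\alpha)-\alpha d\Delta d]>0$ for $\delta$ small) or $Ad$.

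Moreover, the ingredient you yourself label ``the genuine analytic input''---inverting the model Fuchsian operator with only H\"older tangential regularity---is not supplied by your argument, and it is the heart of the paper's proof of this theorem. There, one works in collar coordinates, splits $L=L_0+L_1$ with $L_0=(D+2)(D+1-n)+T^2\Delta'$, $D=T\pa_T$, and inverts the nearby operator $L_0'=(D+2)(D-1)+T^2\Delta'$ \emph{exactly}: one solves $(D-1)\tilde k=-k$ by an explicit integral, then the nondegenerate elliptic problem $(\pa_{TT}+\Delta')h=-\tilde k$ with periodic tangential conditions (classical Schauder estimates supply the tangential regularity that your series lacks), and sets $f_0=T^{-2}(D-1)h=\pa_T(h/T)$; the algebraic mismatch between $L_0'$ and $L$ at $T=0$ is absorbed by the scaling constant $a=1/(n-1)$, the construction is globalized by a partition of unity, and the resulting $O(d^\alpha)$ error is removed by the barrier argument of Lemma \ref{lem:4}. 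So the correct conclusion is not that your route is an alternative proof, but that its two load-bearing steps (the approximate solution and the model inversion) are, respectively, unworkable as stated and missing; the paper's explicit operator $G[k]=\pa_T(h/T)$ is the mechanism that replaces both.
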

Incidentally, we see how the curvature of the boundary enters into the
asymptotics. We now use a comparison function of the form
$w_0+Ad$, where $A$ is a constant, to bound $w-w_0$:
\begin{theorem}\label{th:IV}
Near the boundary,
\[
\tilde{w}:=w-w_0=O(d).
\]
\end{theorem}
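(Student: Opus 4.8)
The plan is to reduce equation (\ref{eq:fr}) to a closed degenerate linear equation for $\tilde w$ and then bound $\tilde w$ by a maximum-principle comparison with $w_0\pm Ad$, as the remark before the statement suggests. First I would use that $u_\Omega$ solves (\ref{eq:LN}), so the left-hand side of (\ref{eq:fr}) vanishes; hence $Lw+2\Delta d=M_w(w)$, and subtracting (\ref{eq:Lw0}) gives $L\tilde w=M_w(w)$. By Theorem \ref{th:II}, $M_w(w)/d$ is bounded near $\pa\Omega$, so on a thin domain $\Omega_\delta$ one has $|L\tilde w|\leq C_0 d$; moreover $\tilde w$ is bounded there, since $w$ is bounded (Theorem \ref{th:I}) and $w_0\in C^{2+\alpha}_\#(\overline\Omega_\delta)$ (Theorem \ref{th:III}). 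Thus everything reduces to: a bounded function $\tilde w$ with $L\tilde w=O(d)$, where $L=d^2\Delta+(4-n)d\nabla d\cdot\nabla+(2-2n)$ is Fuchsian with \emph{strictly negative} zeroth-order coefficient $c=2-2n$ since $n\geq 3$.

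Next I would build the barrier. Using $|\nabla d|=1$ near $\pa\Omega$ one computes $L(d)=d\,[\,d\Delta d+6-3n\,]$, and since $6-3n\leq -3$ and $d\Delta d\to 0$ as $d\to 0$ (Theorem \ref{th:5}), for $\delta$ small enough $L(d)\leq -2d<0$ in $\Omega_\delta$. Consequently, for $A>C_0/2$ the function $L(\tilde w\mp Ad)=M_w(w)\mp AL(d)$ has a definite sign in $\Omega_\delta$: bounded below by $(2A-C_0)d>0$ for the minus sign, above by $-(2A-C_0)d<0$ for the plus sign. Because $c<0$, the classical maximum principle then forbids $\tilde w-Ad$ from attaining a positive \emph{interior} maximum, while on the outer boundary $\Gamma=\{d=\delta\}$ one just takes $A>\|\tilde w\|_\infty/\delta$ to force $\tilde w-Ad<0$ there. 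Symmetrically one handles $-\tilde w$, so that the only thing left to control is the behaviour as $d\to 0$.

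The hard part is precisely this characteristic boundary $\pa\Omega=\{d=0\}$: there $L$ degenerates, and we do not yet know that $\tilde w$ is continuous up to $\pa\Omega$ (much less $C^2$), so a priori the supremum of $\tilde w-Ad$ over $\Omega_\delta$ could be approached as $d\to 0$. To neutralize this I would add a supersolution of $L$ that blows up at $\pa\Omega$: taking $\Theta:=d^{-2}(1+ad)$ with $a$ chosen large (depending on the bound for $\Delta d$), one verifies $L\Theta\leq 0$ in $\Omega_\delta$ — the leading term is $p(-2)d^{-2}=0$, where $p(s)=s^2+(3-n)s+(2-2n)$ is the indicial polynomial with roots $-2$ and $n-1$, and the $ad$ correction absorbs the curvature term $s(\Delta d)d^{s+1}$. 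Then $\tilde w-Ad-\eta\Theta$ still satisfies $L(\cdot)\geq 0$, tends to $-\infty$ as $d\to 0$, and is negative on $\Gamma$; applying the ordinary (nondegenerate) maximum principle on $\{\rho_0(\eta)\leq d\leq\delta\}$ — on whose inner boundary the function is already negative by the blow-up of $\Theta$ — gives $\tilde w\leq Ad+\eta\Theta$ throughout $\Omega_\delta$, and letting $\eta\downarrow 0$ at each fixed interior point yields $\tilde w\leq Ad$; the same argument applied to $-\tilde w$ gives $\tilde w\geq -Ad$, whence $\tilde w=O(d)$. The irreducible use of the Fuchsian structure is exactly the availability of this $d^{-2}$-type barrier, i.e. the fact that $p$ has no nonnegative root strictly below $n-1$, which is what forces a bounded solution of $L\tilde w=O(d)$ to vanish at least linearly at the boundary.
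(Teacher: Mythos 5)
Your proposal is correct and follows essentially the same route as the paper's second comparison argument: reduce to $L\tilde w=M_w(w)=O(d)$, compare $\tilde w$ with $w_0\pm Ad$ using $L(d)=d[\,d\Delta d+3(2-n)\,]$, and neutralize the degenerate boundary $\{d=0\}$ with the singular supersolution $\varepsilon(d^{-2}+Bd^{-1})$ before letting $\varepsilon\downarrow 0$. Apart from working with $\tilde w-Ad$ (no positive maximum) instead of $w_A-w$ (no negative minimum) and making the indicial-root motivation explicit, your argument coincides with the paper's proof.
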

At this stage, we know that
\[ L\tilde w=O(d)\text{ and }\tilde w=O(d)\]
near $\pa\Omega$. Theorem \ref{th:FIb} yields that $\tilde w$ is in
$C_\#^{1+\alpha}(\overline\Omega_\delta)$, for $\delta$ small
enough. It follows that $M_w(w)\in
C^{\alpha}(\overline\Omega_\delta)$. We may now use theorem
\ref{th:FIIa} to conclude that $d^2w$ is of class $C^{2+\alpha}$
near the boundary. Since
$\tilde{w}=O(d)$, $w\,\rest{\pa\Omega}$ is equal to $-H$. This
completes the proof of theorem \ref{th:1}.

We write henceforth $u$ and $v$ for $u_\Omega$ and $v_\Omega$
respectively. The rest of this section is devoted to the proofs of
the above theorems.

It remains to prove theorems \ref{th:I}, \ref{th:III} and
\ref{th:IV}.

Theorem \ref{th:I} is proved in section \ref{sec:comp2} by a
comparison argument combined with regularity estimates, as in
section \ref{sec:isol-sing}.

Theorem \ref{th:III} is proved in three steps: first, one
decomposes $L$ into a sum $L_0+L_1$ in a coordinate system adapted
to the boundary, where $L_0$ is the analogue of $L$ in a
half-space in the new coordinates (section \ref{sec:IIIa}); next,
one solves $Lf=g+O(d^\alpha)$ in this coordinate system
for any function of class
$C^\alpha$---such as $-2\Delta d$---by inverting a model operator closely
related to $L_0$ (section \ref{sec:IIIb}); finally, we patch the
results to obtain a function $w_0$ such that $Lw_0=g$ (section
\ref{sec:consL}).

Theorem \ref{th:IV} is proved in section \ref{sec:comp2} by a
second comparison argument.

\subsection{First comparison argument}
\label{sec:comp1}

Since $\pa\Omega$ is $C^{2+\alpha}$, it satisfies a uniform
interior and exterior sphere condition, and there is a positive
$r_0$ such that any $P\in\Omega$ such that $d(P)\leq r_0$ admits a
unique nearest point $Q$ on the boundary, and such that there are
two points $C$ and $C'$ on the line determined by $P$ and $Q$,
such that
\[B_{r_0}(C)\subset\Omega\subset\RR^n\setminus B_{r_0}(C'),\]
these two balls being tangent to $\pa\Omega$ at $Q$. We now define
two functions $u_i$ and $u_e$. Let
\[u_i(M)=(r_0-\frac{CM^2}{r_0})^{1-n/2}\text{ and }
u_e(M)=(\frac{C'M^2}{r_0}-r_0)^{1-n/2}.
\]
$u_i$ and $u_e$ are solutions of equation (\ref{eq:LN}) in
$B_{r_0}(C)$ and $\RR\setminus B_{r_0}(C')$ respectively.

If we replace $r_0$ by $r_0-\varepsilon$ in the definition of
$u_e$, we obtain a classical solution of (\ref{eq:LN}) in
$\Omega$, which is therefore dominated by $u_\Omega$. It follows
that
\[ u_e\leq u_\Omega\text{ in }\Omega.\]
The monotonicity property (\ref{eq:monot}) yields
\[ u_\Omega\leq u_i\text{ in }B_{r_0}(C).\]

In particular, the inequality
\[ u_e(M)\leq u_\Omega(M)\leq u_i(M)\]
holds if $M$ lies on the semi-open segment $[P,Q)$. Since $Q$ is
then also the point of the boundary closest to $M$, we have
$QM=d(M)$, $CM=r_0-d$ and $C'M=r_0+d$; it follows that
\[(2d+\frac{d^2}{r_0})^{1-n/2}\leq u_\Omega(M)
\leq (2d-\frac{d^2}{r_0})^{1-n/2}.
\]
Since $u_\Omega=(2d+d^2w)^{1-n/2}$, it follows that
\[ |w|\leq \frac1{r_0}\text{ if }d\leq r_0.  \]

Next, consider $P\in\Omega$ such that $d(P)=2\sigma$, with
$3\sigma<r_0$. For $x$ in the closed unit ball $\overline B_1$,
let
\[  P_\sigma :=P+\sigma x;\quad
    u_\sigma(x) := \sigma^{(n-2)/2}u(P_\sigma).
\]
One checks that $u_\sigma$ is a classical solution of
(\ref{eq:LN}) in $\overline B_1$. Since $d\mapsto 2d\pm \frac
1{r_0}d^2$ is increasing for $d<r_0$, and $d(P_\sigma)$ varies
between $\sigma$ and $3\sigma$ if $x$ varies in $\overline B_1$,
we have
\[(6+\frac{9\sigma}{r_0})^{1-n/2}\leq u_\sigma(M)
\leq (2-\frac{\sigma}{r_0})^{1-n/2}.
\]
This provides a uniform bound for $u_\sigma$ on $B_1$. Applying
interior regularity estimates as in \cite{SK-th,BM}, we find that
$\nabla u_\sigma$ is uniformly bounded for $x=0$. Recalling that
$\sigma=\frac12 d(P)$, we find that
\[ d^{\frac n2-1}u\text{ and }d^{\frac n2}\nabla u\text{ are
bounded near }\pa\Omega.
\]
It follows that $u^{-n/(n-2)}=O(d^{n/2})$, and since
$d^2w=-2d+u^{-2/(n-2)}$, we have
\[ d^2\nabla w=-2(1+dw)\nabla d-\frac 2{n-2}u^{-n/(n-2)}\nabla u,
\]
hence $d^2\nabla w$ is bounded near $\pa\Omega$. This completes
the proof of theorem \ref{th:I}.

\subsubsection{Decomposition of $L$ in adapted coordinates}
\label{sec:IIIa}

Since $\pa\Omega$ is compact, there is a positive $r_0$ such that
in any ball of radius $r_0$ centered at a point of $\pa\Omega$,
one may introduce a coordinate system $(Y,T)$ in which $T=d$ is
the last coordinate. The formulae of section \ref{sec:prel} apply.
It will be convenient to assume that the domain of this coordinate
system contains a set of the form
\[ 0<T<\theta\text{ and }|Y_j|<\theta\text{ for }j\leq n-1.
\]
Let $\pa_j=\pa_{x_j}$, and write $d_n$ and $d_j$ for $\pa d/\pa
x_n$ and $\pa d/\pa x_j$ respectively. Primes denote derivatives
with respect to the $Y$ variables: $\pa'_j=\pa_{Y_j}$,
$\nabla'=\nabla_Y$, $\Delta'=\sum_{j<n}\pa_j^{\prime 2}$, etc.
We write $\tilde\nabla d=(d_1,\dots,d_{n-1})$.
Recall that $|\nabla d|=1$. We let throughout
\[
D=T\pa_T.
\]
The transformation formulae are
\begin{eqnarray*}
    T & =&  d(x_1,\dots,x_n); \qquad Y_j = x_j\text{ for }j<n;\\
    \pa_n &=& d_n\pa_T; \qquad \pa_j=d_j\pa_T+\pa_j'.
\end{eqnarray*}
We recall that $\Delta d=(1-n)H$, where $H$ is the mean curvature
of $\pa\Omega$.

We further have
\begin{eqnarray*}
    d\nabla d\cdot\nabla w &=& (D+T\tilde\nabla d\cdot\nabla')w \\
    |\nabla w|^2 &=& w_T^2+|\nabla'
    w|^2+2w_T\tilde\nabla d\cdot\nabla'w\\
    \Delta w &=&
    w_{TT}+\Delta'w+2\tilde\nabla d\cdot\nabla'w_T+w_T\Delta d.
\end{eqnarray*}
It follows that
\[ Lw=L_0w+L_1w,\]
where
\[ L_0w=(D+2)(D+1-n)w+T^2\Delta'w,
\]
and
\[ L_1w=(4-n)\tilde\nabla d\cdot\nabla'(Tw)
        +2T\tilde\nabla d\cdot\nabla'(Dw)+T(Dw)\Delta d.
\]

\subsubsection{Solution of $Lf=k+O(d^\alpha)$}
\label{sec:IIIb}

We now solve approximately equation $Lf=k$ by solving exactly a
model problem, related to the operator $L_0$.

Let $C^\alpha_{\text{per}}$ denote the space of functions
$k(Y,T)\in C^\alpha(0\leq T\leq \theta)$ which satisfy
$k(Y_j+2\theta,T)=k(Y_j,T)$ for $1\leq j\leq n-1$ . We prove the
following theorem.
\begin{theorem}\label{th:L0}
Let $\theta>0$, and $k(Y,T)$ of class $C^\alpha_{\text{per}}$ Then
there is a function $f$ such that
\begin{enumerate}
  \item $L_0f=k+O(d^\alpha)$,
  \item $f$ is of class $C_\#^{2+\alpha}(0\leq T\leq\theta)$,
  \item $f(Y,0)=k(Y,0)/(2-2n)$ and
  \item $L_1f=O(d^\alpha)$.
\end{enumerate}
\end{theorem}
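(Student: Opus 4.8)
The plan is to construct $f$ by a Fourier expansion in the periodic $Y$ variables, which diagonalizes $\Delta'$, and for each Fourier mode to solve explicitly the resulting ordinary differential equation in $T$ associated with $L_0$. Write $k(Y,T)=\sum_{m}k_m(T)e^{i\pi m\cdot Y/\theta}$, where $m$ ranges over $\mathbb{Z}^{n-1}$; then, because $L_0=(D+2)(D+1-n)+T^2\Delta'$ with $D=T\pa_T$, the mode $f_m(T)e^{i\pi m\cdot Y/\theta}$ of $f$ must satisfy the Euler-type ODE
\[
(D+2)(D+1-n)f_m - \frac{\pi^2|m|^2}{\theta^2}T^2 f_m = k_m(T).
\]
The homogeneous operator $(D+2)(D+1-n)$ has indicial roots $-2$ and $n-1$, which are distinct (as $n\geq 3$), so it is boundedly invertible on functions vanishing appropriately at $T=0$; the extra term $-(\pi^2|m|^2/\theta^2)T^2$ is a relatively compact perturbation that is small near $T=0$. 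The key point is that the resonant behavior is governed entirely by the value at $T=0$: the ansatz $f_m(T)=k_m(0)/(2-2n)+T g_m(T)$ reduces the equation for the $m=0$ mode to $L_0(Tg_0)=k_0(T)-k_0(0)+O(T^2)=O(T^\alpha)$, since $k_0\in C^\alpha$, and on the range $D+2$, $D+1-n$ both act invertibly on $T^\alpha$-type behavior. I would first treat $m=0$ by this explicit substitution, then treat $m\neq 0$ by a fixed-point argument in a weighted space, using that $D+2$ and $D+1-n$ each gain a power of $T$ when inverted (so $T^2\Delta'$ contributes a genuinely small term), obtaining $f_m$ with $\|f_m\|$ controlled by $\|k_m\|$ with summable-in-$m$ bounds after accounting for the decay of Fourier coefficients of a $C^\alpha$ function.

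Having built $f$ mode by mode, the next step is to verify the four claimed properties. Property~(3), $f(Y,0)=k(Y,0)/(2-2n)$, is built into the ansatz and the choice of indicial normalization. Property~(1), $L_0 f=k+O(d^\alpha)$, follows because the explicit substitution absorbs the constant term $k(Y,0)$ exactly and leaves a remainder in $C^\alpha$ vanishing at $T=0$, hence of size $O(T^\alpha)=O(d^\alpha)$ near the boundary. Property~(2), that $f\in C_\#^{2+\alpha}$, i.e.\ that $f$, $Tf$ in $C^{1+\alpha}$, and $T^2 f$ in $C^{2+\alpha}$ up to $T=0$, is the Schauder-type content: because $f$ satisfies $T^2\Delta' f + (\text{terms in }D f, f)=k-O(d^\alpha)$ with $k\in C^\alpha$, the function $T^2 f$ solves a genuine Dirichlet-type elliptic problem in the half-strip with $C^\alpha$ right-hand side, so the $C^{2+\alpha}$ regularity of $T^2 f$ follows from the boundary Schauder estimates of Section~4; the lower-order weighted statements then follow by the interpolation inequalities of Theorem~\ref{th:int-l}, exactly as in the proof of Theorem~\ref{th:FIIa}. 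For Property~(4), $L_1 f=O(d^\alpha)$, recall $L_1 w=(4-n)\tilde\nabla d\cdot\nabla'(Tw)+2T\tilde\nabla d\cdot\nabla'(Dw)+T(Dw)\Delta d$; every term carries an explicit factor $T$ times either $\nabla' f$ or $Df$ or $f$, all of which are bounded (indeed $C^\alpha$) up to $T=0$ by Property~(2), so $L_1 f=O(T)=O(d^\alpha)$ since $\alpha<1$.

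The main obstacle I anticipate is the summability and uniform control of the Fourier series: I must show that the mode-by-mode solution operator has norm bounds in $m$ that, combined with the Fourier-coefficient decay coming only from $k\in C^\alpha_{\text{per}}$ (which is weak — merely $|k_m|_{L^\infty_T}\lesssim |m|^{-\alpha}$ in an averaged sense, not pointwise), still produce a function $f$ of class $C^{2+\alpha}_\#$ rather than merely $C^\alpha$. The resolution is not to estimate term by term but to use the dyadic / Littlewood--Paley characterization of $C^\alpha$ in the $Y$ variables (Theorem~\ref{th:c-alph-char}): group the modes $|m|\sim 2^j$, apply Bernstein's inequality and the fact that on this frequency block $T^2\Delta'$ has ``size'' $T^2 2^{2j}$ which is controlled after the two gains of $T$ from inverting $D+2$ and $D+1-n$, and sum the blocks. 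A cleaner alternative, which I would adopt if the series bookkeeping becomes unwieldy, is to bypass the explicit ODE entirely for $m\neq 0$: set $f=k(Y,0)/(2-2n)+Tg$, derive the equation $L_0(Tg)=\kappa$ with $\kappa:=k(Y,T)-k(Y,0)+O(T^2)\in C^\alpha$ vanishing at $T=0$, and invert $L_0$ on $\{T=0 \text{ data zero}\}$ directly by the half-space Schauder theory for the model Fuchsian operator $L_0$ — treating $Tg$ as the unknown, the equation $(D+2)(D+1-n)(Tg)+T^2\Delta'(Tg)=\kappa$ is, after multiplying through by an appropriate power of $T$, a standard elliptic Dirichlet problem in the half-strip, solvable with the required regularity because $\kappa\in C^\alpha$ and vanishes at the degenerate boundary, which is precisely the situation covered by Theorem~\ref{th:FIIa} and the estimates up to the boundary of Section~4.2.
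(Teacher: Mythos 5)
Both routes you sketch contain genuine gaps. In your primary (Fourier mode-by-mode) route, the key quantitative claim is false: inverting the Euler operators $D+2$ and $D+1-n$ does \emph{not} gain a power of $T$. For $(D-\lambda)^{-1}$ one has $T^\beta\mapsto T^\beta/(\beta-\lambda)$, so the inverse preserves, rather than improves, the rate of vanishing at $T=0$; consequently the perturbation $|m|^2T^2$ coming from $T^2\Delta'$ is not rendered small on the whole strip $0\le T\le\theta$ (it is of size $|m|^2\theta^2$ away from the boundary), and the fixed-point argument with bounds uniform in $m$ does not close as described. The re-assembly of the modes into a function that is $C_\#^{2+\alpha}$ \emph{jointly} in $(Y,T)$, starting from data that is merely $C^\alpha$ in $Y$, is exactly the content of the theorem, and your sketch defers it to an unperformed Littlewood--Paley bookkeeping whose crucial ingredient is the incorrect ``gain of $T$''. (A smaller issue: for $n=3$ the power $T^2$ is resonant with the indicial root $n-1$, so the naive iteration produces logarithms.) In your fallback route, the very first step is illegitimate: writing $f=k(Y,0)/(2-2n)+Tg$ forces you to apply $T^2\Delta'$ to $k(Y,0)$, which is only $C^\alpha$ in $Y$, so the claimed remainder $\kappa\in C^\alpha$ does not exist as a function. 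Moreover, no multiplication by a power of $T$ converts $L_0=T^2(\partial_{TT}+\Delta')+(4-n)T\partial_T+2(1-n)$ into a uniformly elliptic Dirichlet problem up to $T=0$; and Theorem~\ref{th:FIIa} is an \emph{a priori} regularity statement (for right-hand sides of the form $d\cdot C^\alpha$, with the complementary weighted regularity supplied by the type (I) results), not an existence theorem for the degenerate equation. Existence would require a separate argument, e.g.\ approximation on $\{\ep<d<\delta\}$ plus a barrier as in Lemma~\ref{lem:4} of the paper.

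For comparison, the paper sidesteps all of this by an exact algebraic reduction to a genuinely elliptic problem: one first solves $(D-1)\tilde k=-k$ by the explicit scaling integral $\tilde k=\int_1^\infty F_1[k](Y,T\sigma)\sigma^{-2}d\sigma$, then solves the non-degenerate problem $(\partial_{TT}+\Delta')h+\tilde k=0$ in the slab (periodic in $Y$, $h(Y,0)=h_T(Y,\theta)=0$), where classical Schauder theory gives $h\in C^{2+\alpha}$ jointly, and finally sets $f_0=T^{-2}(D-1)h=\partial_T(h/T)=\int_0^1\sigma h_{TT}(Y,T\sigma)d\sigma$. The identity $L_0'f_0=k$ (with $L_0'=L_0+(n-2)(D+2)$), the boundary values $f_0(Y,0)=-\tfrac12 k(Y,0)$, $Df_0(Y,0)=0$, the $C_\#^{2+\alpha}$ regularity, and $L_1f_0=O(d^\alpha)$ are then all read off from these explicit formulas, and the constant rescaling $f=G[k/(n-1)]$ converts $L_0'$ into $L_0$ modulo $O(d^\alpha)$. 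If you want to salvage your approach, you would need either to carry out the uniform-in-frequency analysis honestly (without the false gain of $T$), or to replace the constant-in-$T$ ansatz by a regularizing lift of the boundary data and supply an independent existence argument for the degenerate operator.
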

\begin{proof}
Let
\[ L'_0=(D+2)(D-1)+T^2\Delta'=L_0+(n-2)(D+2).
\]
We first solve the equation $L'_0f_0=k$.
\begin{lemma} There is a bounded linear operator
\[ G : C^\alpha_{\text{per}}\longrightarrow C_\#^{2+\alpha}(0\leq
T\leq\theta)
\]
such that $f_0:=G[k]$ verifies
\begin{enumerate}
  \item $L'_0f_0=k$,
  \item $f_0$ is of class $C_\#^{2+\alpha}(0\leq T\leq\theta)$,
  \item $f_0(Y,0)+k(Y,0)/2=0$, $Df_0(Y,0)=0$ and
  \item $L_1f_0=O(d^\alpha)$.
\end{enumerate}
\end{lemma}
\begin{proof}
One first constructs $\tilde k$ such that $(D-1)\tilde k=-k$, and
$\tilde k$ and $D\tilde k$ are both $C^\alpha$ up to $T=0$. One
may take
\[ \tilde{k}=\int_1^\infty
F_1[k](Y,T\sigma)\frac{d\sigma}{\sigma^2}.
\]
where $F_1$ is an extension operator, so that $F_1[k]=k$ for
$T\leq \theta$.

One checks that $\tilde k=k$ for $T=0$.

One then solves $(\pa_{TT}+\Delta')h+\tilde k=0$ with periodic
boundary conditions, of period $2\theta$, in each of the $Y_j$,
and $h(Y,0)=h_T(Y,\theta)=0$; this yields
\[ h \text{ is of class }C^{2+\alpha}(0\leq T\leq \theta)
\]
by the Schauder estimates. In particular, $h_T$ is continuous up
to $T=0$, and $Dh=0$ for $T=0$ and $T=\theta$.

Since $h=0$ for $T=0$, we also have $\Delta' h=0$ for $T=0$. The
equation for $h$ therefore gives
\[h_{TT}=-\tilde k=-k\text{ for }T=0.
\]

In addition,
\[(\pa_{TT}+\Delta')Dh=D(\pa_{TT}+\Delta')h+2h_{TT}=k-\tilde
k+2h_{TT},
\]
which is $C^\alpha$. Since, on the other hand, $Dh$ is of class
$C^1$ and $Dh=0$ for $T=0$ and $T=\theta$, we conclude, using
again the Schauder estimates, that
\[ Dh \text{ is of class }C^{2+\alpha}(0\leq T\leq \theta).
\]

We now define $f_0$ by
\begin{equation}\label{eq:G}
f_0:=T^{-2}(D-1)h
        =\pa_T\left(\frac h T\right)
        =\int_0^1 \sigma h_{TT}(Y,T\sigma)\,d\sigma.
\end{equation}
Since $f_0$ is itself uniquely determined by $h$, itself defined
in terms of $k$ we define a map $G$ by
\[f_0=G[k].
\]
A direct computation yields $L'_0f_0=k$:
\begin{align*}
L'_0f_0 &=(D+2)(D-1)T^{-2}(D-1)h+(D-1)\Delta' h\\
       &=T^{-2}D(D-3)(D-1)h+(D-1)\left\{-T^{-2}D(D-1)h-\tilde{k}\right\}\\
       &=T^{-2}D(D-1)(D-3)h-T^{-2}(D-3)D(D-1)h-(D-1)\tilde{k}\\
       &=k.
\end{align*}

Let us now consider the regularity of $f_0$ up to $\pa\Omega$, and
the values of $f_0$ and its derivatives on $\pa\Omega$.

Consider $g_0:=T^2f_0$. Since $g_0=(D-1)h\in C^{2+\alpha}(0\leq
T\leq \theta)$ and vanishes for $T=0$, we have $g_0=\int_0^1
g_{0T}(Y,T\sigma)Td\sigma$. It follows that
\[Tf_0(Y,T)=\int_0^1 g_{0T}(Y,T\sigma)d\sigma
\in C^{1+\alpha}(0\leq T\leq \theta).
\]
Since, on the other hand, $G[k]=\int_0^1 \sigma
h_{TT}(Y,T\sigma)\,d\sigma$, we find $f_0\in C^\alpha(0\leq T\leq
\theta)$, and
\[f_0(Y,0)=\frac12 h_{TT}(Y,0)=-\frac12 k(Y,0).
\]
We therefore have
\[f_0 \text{ is of class }C_\#^{2+\alpha}(0\leq T\leq \theta).
\]
Since
\[ (D+2)f_0 =T^{-2}D(D-1)h= h_{TT},
\]
we find $Df_0(Y,0)=h_{TT}(Y,0)-2f_0(Y,0)=0$. By differentiation
with respect to the $Y$ variables, we obtain that $\tilde\nabla
d\cdot\nabla'(Tf_0)$ is of class $C^\alpha$ and vanishes for
$T=0$. The same is true of $T(Df_0)\Delta d$. Similarly,
\[
2T\tilde\nabla d\cdot\nabla'Df_0 = 2\tilde\nabla
d\cdot\nabla'[\pa_T(T^2f_0)-2Tf_0]
\]
is of class $C^\alpha$, and vanishes for $T=0$ because this is
already the case for $TDf_0$. It follows that $L_1f_0$ is a
$C^\alpha$ function which vanishes for $T=0$; it is therefore
$O(d^\alpha)$ as desired.
\end{proof}

We are now ready to prove theorem \ref{th:L0}. Let $a$ be a
constant, and $f=G[ak]$. We therefore have $L'_0f=ak$, and, for
$T=0$, $f=-\frac 12 ak$. Since $L_1f\in C^\alpha$, and $L_1f$ and
$Df$ both vanish for $T=0$, it follows that, for $T=0$,
\[
Lf-k=(L'_0-(n-2)(D+2)+L_1)f-k=[a+(n-2)a-1]k.
\]
Taking $a=1/(n-1)$, we find that $f$ has the announced properties.
\end{proof}



\subsubsection{Solution of $Lw_0=g$}
\label{sec:consL}

Let us now consider a function $g$ of class
$C^\alpha(\overline\Omega_\delta)$.

Recall that there is a positive $r_0<\delta$ such that any ball of
radius $r_0$, centered at a point of the boundary, is contained in
a domain in which we have a system of coordinates of the type
$(Y,T)$. Let us cover (a neighborhood of) $\pa\Omega$ by a finite
number of balls $(V_\lambda)_{\lambda\in\Lambda}$ of radius
$r_1<r_0$ and centers on $\pa\Omega$, and consider the balls
$(U_\lambda)_{\lambda\in\Lambda}$ of radius $r_0$ with the same
centers. Thus, we may assume that every $U_\lambda$ is associated
with a coordinate system $(Y_\lambda,T_\lambda)$ of the type
considered in section \ref{sec:prel}; taking $r_1$ smaller if
necessary, we may also assume that $\overline{V}_\lambda\subset
Q_\lambda\subset U_\lambda$, where $Q_\lambda$ has the form
\[Q_\lambda:=\{(Y_{\lambda,1,\dots,}Y_{\lambda,n-1},T_\lambda) : 0\leq Y_{\lambda,
j}\leq\theta\text{ for every $j$, and }0<T_\lambda<\theta\}.
\]
Consider a smooth partition of unity $(\varphi_\lambda)$ and
smooth functions $(\Phi_\lambda)$, such that
\begin{enumerate}
  \item $\sum_{\lambda\in\Lambda}\varphi_\lambda=1$ near $\pa\Omega$;
  \item supp\,$\varphi_\lambda\subset V_\lambda$;
  \item supp\,$\Phi_\lambda\subset U_\lambda\cap\{T<\theta\}$;
  \item $\Phi_\lambda=1$ on $V_\lambda$.
\end{enumerate}
In particular, $\Phi_\lambda\varphi_\lambda=\varphi_\lambda$.

The function $g\varphi_\lambda$ is of class $C^\alpha(\overline
Q_\lambda)$; it may be extended by successive reflections to an
element of $C^\alpha_{\text{per}}$, with period $2\theta$ in the
$Y_\lambda$ variables; this extension will be denoted by the same
symbol for simplicity.

Let us apply theorem \ref{th:L0}, and consider, for every
$\lambda$, the function $w_\lambda:=G[g\varphi_\lambda/(n-1)]$. We
have
\[
Lw_\lambda=g\varphi_\lambda+R_\lambda,
\]
in $U_\lambda\cap\{T<\theta\}$, where $R_\lambda$ is H\"older
continuous for $T\leq \theta$, and vanishes on $\pa\Omega$; as a
consequence, $R_\lambda=O(d^\alpha)$.

The function $\Phi_\lambda w_\lambda$ is compactly supported in
$U_\lambda$, and may be extended, by zero, to all of $\Omega$; it
is of class $C_\#^{2+\alpha}(\overline\Omega)$. We may therefore
consider
\[ w_1:=\sum_{\lambda\in\Lambda}\Phi_\lambda w_\lambda,\]
which is supported near $\pa\Omega$. Now, near $\pa\Omega$,
\begin{eqnarray*}
\sum_{\lambda}L(\Phi_\lambda
w_\lambda)&=&\sum_{\lambda}\Phi_\lambda L(w_\lambda)+
  2d^2\nabla\Phi_\lambda\cdot\nabla w_\lambda\\
& &\qquad\qquad  \mbox{}+
  d^2w_\lambda\Delta\Phi_\lambda
  +(4-n)w_\lambda d\nabla d\cdot\nabla\Phi_\lambda  \\
  &=& \sum_{\lambda}g\Phi_\lambda\varphi_\lambda+R'_\lambda
  = g+f,
\end{eqnarray*}
where $f=\sum_{\lambda}R'_\lambda$ has the same properties as
$R_\lambda$. It therefore suffices to solve $Lw_2=f$ when $f$ is a
is H\"older continuous function which vanishes on the boundary.
\begin{lemma}\label{lem:4}
For any $f\in C^\alpha(\overline\Omega)$, there is, for $\delta$
small enough, an element $w_2\in
C_\#^{2+\alpha}(\overline\Omega_\delta)$ such that
\[ Lw_2=f\text{ and }w_2=O(d^\alpha)\text{ near }\pa\Omega.\]
\end{lemma}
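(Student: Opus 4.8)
The plan is to construct $w_2$ by exhausting $\Omega_\delta$ from inside with non-degenerate collars, controlling the solutions there by an explicit barrier, and then extracting the boundary regularity from the rescaling argument already used in Section~\ref{sec:e8}. Observe first that the stated conclusion forces $f$ to vanish on $\pa\Omega$: applying $L$ to a function of class $C^{2+\alpha}_\#$ which is $O(d^\alpha)$ produces an $O(d^\alpha)$ function, and a $C^\alpha$ function which is $O(d^\alpha)$ vanishes on $\pa\Omega$; this is exactly the kind of $f$ coming out of the construction of $w_1$ above. So we may assume $f$ vanishes on $\pa\Omega$, whence $|f(x)|\le [f]_{\alpha,\Omega}\,d(x)^\alpha$ near $\pa\Omega$.

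The algebraic input, which follows from $|\nabla d|=1$, is the indicial identity $L(d^\beta)=(\beta+2)(\beta+1-n)\,d^\beta+\beta\,d^{\beta+1}\Delta d$. Since $n\ge 3$ and $0<\alpha<1$, the exponent $\alpha$ lies strictly between the indicial roots $-2$ and $n-1$, so $(\alpha+2)(\alpha+1-n)=:-c_\alpha<0$; as $\Delta d$ is bounded near $\pa\Omega$, after shrinking $\delta$ we obtain $L(d^\alpha)\le -\tfrac12 c_\alpha d^\alpha$ on $\Omega_\delta$, and with $A:=2[f]_{\alpha,\Omega}/c_\alpha$ both $Ad^\alpha-w$ and $Ad^\alpha+w$ become supersolutions whenever $Lw=f$. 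Now for each large $k$ take the collar $\Omega^{(k)}_\delta=\{1/k<d<\delta\}$, which is a $C^{2+\alpha}$ domain by Theorem~\ref{th:5}; on it $L=d^2\Delta+(4-n)d\nabla d\cdot\nabla+(2-2n)$ is uniformly elliptic with $C^\alpha$ coefficients of $k$-independent norm and zeroth-order term $2-2n\le 0$, so Theorem~\ref{th:2} yields $w^{(k)}\in C^{2+\alpha}(\overline{\Omega^{(k)}_\delta})$ with $Lw^{(k)}=f$ and $w^{(k)}=0$ on $\pa\Omega^{(k)}_\delta$. Since $Ad^\alpha\mp w^{(k)}$ is a supersolution that is $\ge 0$ on $\{d=1/k\}$ and on $\{d=\delta\}$, the maximum principle (using $c\le 0$) gives the $k$-uniform bound $|w^{(k)}|\le Ad^\alpha$. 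Interior and boundary Schauder estimates then bound $w^{(k)}$ in $C^{2+\alpha}$ uniformly on compact subsets of $\Omega_\delta$ and up to $\Gamma=\{d=\delta\}$; by Arzel\`a--Ascoli and a diagonal extraction a subsequence converges in $C^2_{\mathrm{loc}}(\Omega_\delta)$ to $w_2$, which solves $Lw_2=f$, vanishes on $\Gamma$, is $C^{2+\alpha}$ away from $\pa\Omega$, and satisfies $w_2=O(d^\alpha)$.

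It remains to show $w_2\in C^{2+\alpha}_\#$ up to $\pa\Omega$. Here $L$ is Fuchsian of type (II): writing $L=d^2\delta^{ij}\pa_{ij}+(4-n)(\pa_i d)\pa_i+(2-2n)$, the coefficients are $C^\alpha$ near $\pa\Omega$ because $\pa_i d\in C^{1+\alpha}$ there. One then repeats the dilation argument of Section~\ref{sec:e8}: near a boundary point, in the coordinates $(T,Y)$ of Section~\ref{sec:prel}, set $w_{2,\eps}(T,Y)=w_2(\eps T,y_0+\eps Y)$ on $Q_1$ for $|y_0|\le\rho$ and $0<\eps\le 1$, so that $A_\eps w_{2,\eps}=f_\eps$ with $A_\eps$ uniformly elliptic with $C^\alpha$ coefficients on $Q_1$; since $w_2=O(d^\alpha)$ and $f=O(d^\alpha)$ we get $\|f_\eps\|_{C^\alpha(Q_1)}+\|w_{2,\eps}\|_{L^\infty(Q_1)}\le M\eps^\alpha$, hence the interior $C^{2+\alpha}$ estimate gives $\|w_{2,\eps}\|_{C^{2+\alpha}(Q_2)}\le CM\eps^\alpha$. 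Undoing the dilation through the same three-case analysis as in Section~\ref{sec:e8} turns this into $d^j\nabla^j w_2=O(d^\alpha)$ for $j=0,1,2$ together with the matching weighted H\"older bounds, i.e.\ $w_2\in C^{2+\alpha}_\#(Q_3)$; covering a neighbourhood of $\pa\Omega$ by finitely many such patches and combining with the interior regularity gives $w_2\in C^{2+\alpha}_\#(\overline\Omega_{\delta'})$ for $\delta'<\delta$.

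The step I expect to be the real obstacle is this last one: obtaining H\"older continuity of the weighted second derivatives up to the characteristic boundary $\{d=0\}$ from the scale-invariant family of interior estimates, which is precisely the technical content of the proofs of Theorems~\ref{th:FIa} and \ref{th:FIb}. The one genuinely new feature is that the right-hand side is $O(d^\alpha)$ rather than bounded or $O(d)$, which is harmless because $\alpha$ differs from both indicial exponents $-2$ and $n-1$; for the same reason the barrier step is clean, using $0<\alpha<n-1$ together with $c=2-2n<0$.
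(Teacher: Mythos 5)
Your proposal is correct, and its core is the same as the paper's proof of Lemma~\ref{lem:4}: the paper also solves the Dirichlet problem with zero data on the collars $\{\ep<d<\delta\}$, uses exactly the barrier $A d^\alpha$ via the indicial computation $-L(d^\alpha)=d^\alpha[(\alpha+2)(n-1-\alpha)-\alpha d\Delta d]$ together with the maximum principle (the assumption $|f|\leq a d^\alpha$, i.e.\ $f$ vanishing on $\pa\Omega$, is taken for granted there, consistently with your preliminary observation), and passes to the limit by interior regularity. The only divergence is the last step: the paper concludes by citing the type~(I) theorem~\ref{th:FIb} (to get $w_2\in C_\#^{1+\alpha}$) and then the type~(II) theorem~\ref{th:FIIa} (to get $C_\#^{2+\alpha}$), whereas you rerun the dilation-and-three-case argument of the proofs of those theorems directly at the $C^{2+\alpha}$ level for the specific operator $L$. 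This is a legitimate variant and in fact has a small advantage: the right-hand side here is only $O(d^\alpha)$, while theorem~\ref{th:FIb} is stated for right-hand sides of the form $df$ with $f$ bounded, so the paper's citation implicitly relies on the same extension you carry out explicitly (the scaled data are $O(\eps^\alpha)$ either way); the price is that you must redo the patching estimates for the weighted second derivatives and the routine product-rule translation into the $C_\#^{2+\alpha}$ norm, which the paper gets for free from theorem~\ref{th:FIIa}.
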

\begin{proof}
Consider the solution $w_\ep$ of the Dirichlet problem $Lw_\ep=f$
on a domain of the form $\{\ep<d(x)<\delta\}$, with zero boundary
data. As before, $\delta$ is taken small enough to ensure that
$d\in C^{2+\alpha}(\overline\Omega_\delta)$. Schauder theory gives
$w_\ep\in C^{2+\alpha}(\{\ep\leq d(x)\leq\delta\})$. By
assumption, $|f|\leq a d^\alpha$ for some constant $a$. Let
$A>(\alpha+2)(n-1-\alpha)$. Since
\[
-L(d^\alpha)=d^\alpha[(\alpha+2)(n-1-\alpha)-\alpha d\Delta d],
\]
$A d(x)^\alpha$ is a super-solution if $\delta$ is small, and the
maximum principle gives us a uniform bound on $w_\ep/d^\alpha$. By
interior regularity, we obtain that, for a sequence $\ep_n\to 0$,
the $w_{\ep_n}$ converge in $C^2$, in every compact away from the
boundary, to a solution $w_2$ of $Lw_2=f$ with $w_2=O(d^\alpha)$.
Since the right-hand side $f$ is also $O(d^\alpha)$, we obtain, by
the ``type (I)'' theorem \ref{th:FIb}, that $w_2$  of class
$C_\#^{1+\alpha}(\overline\Omega_\delta)$. Theorem \ref{th:FIIa}
now ensures that $w_2$ is in fact of class
$C_\#^{2+\alpha}(\overline\Omega_\delta)$, QED.
\end{proof}
It now suffices to take $g=-2\Delta d$ and let
\[ w_0=w_1-w_2.
\]
By construction,  $Lw_0+2\Delta d=0$ near the boundary, and $w_0$
is of class $C_\#^{2+\alpha}(\overline\Omega_\delta)$ if $\delta$
is small. In addition, we know from theorem \ref{th:L0} that
$w_1\,\rest{\pa\Omega}=(2\Delta d)/(2n-2)$, which is equal to $-H$
on $\pa\Omega$. Lemma \ref{lem:4} gives us $w_2=O(d^\alpha)$. We
conclude that $w_0\,\rest{\pa\Omega}=-H$ on the boundary.

This completes the proof of theorem \ref{th:III}.


\subsubsection{Second comparison argument}
\label{sec:comp2}

At this stage, we have the following information, where
$\Omega_\delta=\{x : 0<d(x)<\delta\}$, for $\delta$ small enough:
\begin{enumerate}
  \item $w$ and $d\nabla w$ are bounded near $\pa\Omega$;
  \item $w=w_0+\tilde w$, where $L\tilde w=M_w(w)=O(d)$, and
  \item $w_0$
  is of class $C_\#^{2+\alpha}(\overline\Omega_\delta)$ for
  $\delta$ small enough.
\end{enumerate}
We wish to estimate $\tilde w$. Write $|M_w(w)|\leq cd$, where $c$
is constant.

For any constant $A>0$, define
\[ w_A:=w_0+Ad.
\]
Since $L(d)=3(2-n)d+d^2\Delta d$, we have
\[ L(w_A - w)=L(Ad-\tilde w)\leq Ad[3(2-n)+d\Delta d]+cd.
\]
Choose $\delta$ so that, say, $2(2-n)-d\Delta d\leq 0$ for $d\leq
\delta$. Then, choose $A$ so large that (i) $w_0+A\delta\geq w$
for $d=\delta$, and (ii) $(2-n)A+c\leq 0$. We then have
\[  L(w_A - w)\leq 0\text{ in }\Omega_\delta\text{ and }
      w_A - w \geq 0\text{ for }d=\delta.
\]
Next, choose $\delta$ and a constant $B$ such that
$nB+(2+Bd)\Delta d\geq 0$ on $\Omega_\delta$. We have, by direct
computation,
\[ L(d^{-2}+Bd^{-1})=-(nB+2\Delta d)d^{-1}-B\Delta d\leq 0
\]
on $\Omega_\delta$. Therefore, for any $\ep>0$,
$z_\ep:=\ep[d^{-2}+Bd^{-1}]+w_A - w$ satisfies $L z_\ep\leq 0$,
and the maximum principle ensures that $z_\ep$ has no negative
minimum in $\Omega_\delta$. Now, $z_\ep$ tends to $+\infty$ as
$d\to 0$. Therefore, $z_\ep$ is bounded below by the least value
of its negative part restricted to $d=\delta$. In other words, for
$d\leq \delta$, we have, since $w_A - w \geq 0\text{ for
}d=\delta$,
\[ w_A - w + \ep[d^{-2}+Bd^{-1}]\geq
\ep\min(\delta^{-2}+B\delta^{-1},0).
\]
Letting $\ep\to 0$, we obtain
$w_A - w\geq 0\text{ in }\Omega_\delta$.

Similarly, for suitable $\delta$ and $A$,
$w - w_{-A}\geq 0\text{ in }\Omega_\delta$.

We now know that $w$ lies between $w_0+Ad$ and $w_0-Ad$ near
$\pa\Omega$, hence $|w-w_0|=O(d)$, QED.


\end{document}